\makeatletter\@addtoreset {equation}{section}\makeatother
\newtheorem{thm}{Theorem}[section]
\newtheorem{cor}[thm]{Corollary}
\newtheorem{prop}[thm]{Proposition}
\newtheorem{lem}[thm]{Lemma}
\newtheorem{remark}{Remark}
\DeclareMathOperator{\sech}{sech}
\DeclareMathOperator{\Range}{Range}
\DeclareMathOperator{\Span}{span}
\begin{document}

\title{\bf $L^2$ orbital stability of Dirac solitons \\ in the massive Thirring model}

\author{Andres Contreras, Dmitry E. Pelinovsky, and Yusuke Shimabukuro \\
{\small \it Department of Mathematics and Statistics, McMaster
University, Hamilton, Ontario, Canada, L8S 4K1 } }

\date{\today}
\maketitle

\begin{abstract}
We prove $L^2$ orbital stability of Dirac solitons in the massive
Thirring model. Our analysis uses local well posedness
of the massive Thirring model in $L^2$, conservation of the charge functional,
and the auto--B\"{a}cklund transformation. The latter transformation
exists because the massive Thirring model is integrable via the inverse scattering
transform method.
\end{abstract}

{\bf Keywords:} massive Thirring model, Dirac solitons, orbital stability, conserved quantities,
auto--B\"{a}cklund transformation, Lax operators.

{\bf MSC numbers:} 35Q41, 35Q51, 35Q75, 37K10, 37K45.

\section{Introduction}

Among one-dimensional nonlinear Dirac equations, the Massive Thirring Model (MTM) is
particularly interesting because of its integrability via the inverse scattering transform method \cite{KN,KM}.
The nonlinear Dirac system arises as a relativistic extension of the nonlinear Schr\"{o}dinger equation
and while they share common features, the analysis of the Dirac system is generally more involved
because the classical energy-based methods \cite{GSS} do not apply.

We consider the Cauchy problem of the MTM system
\begin{align} \label{MTM}
\left\{ \begin{array}{l}
i(u_t+u_x)+v+u|v|^2=0, \\
i(v_t-v_x)+u+v|u|^2=0, \end{array} \right.
\end{align}
subject to an initial condition $(u,v)|_{t = 0} = (u_0,v_0)$ in $H^s(\mathbb{R})$ for $s\geq 0$.

The Cauchy problem for the MTM system is known to be locally well-posed in $H^s(\mathbb{R})$ for $s>0$
and globally well-posed for $s>\frac{1}{2}$ \cite{SelTes} (see earlier results in \cite{Del}).
More pertinent to our study is the global well-posedness in $L^2(\mathbb{R})$ proved in the recent work
of Candy (see Theorem 1 in \cite{Candy} for $s = 0$). The next theorem summarizes the global well-posedness results
for the scopes needed in our work.

\begin{thm}{\cite{Candy}}
Let $(u_0,v_0) \in H^m(\mathbb{R})$ for an integer $m \geq 0$. There exists a unique global solution
$(u,v)\in C(\mathbb{R}; H^m(\mathbb{R}))$ to the MTM system \eqref{MTM} such that the charge is conserved
\begin{equation}
\label{l2-conservation}
\|u(\cdot,t)\|_{L^2}^2+\|v(\cdot,t)\|_{L^2}^2=\|u_0\|_{L^2}^2+\|v_0\|_{L^2}^2
\end{equation}
for every $t\in \mathbb{R}$. Moreover, the solution depends continuously on initial data $(u_0,v_0) \in H^m(\mathbb{R})$.
\label{theorem-Candy}
\end{thm}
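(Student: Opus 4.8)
The plan is to obtain the solution in three stages: local well-posedness in the scaling-critical space $L^2$, globalization via the charge \eqref{l2-conservation}, and persistence of $H^m$ regularity for $m\ge 1$. For the local theory, note first that, after dropping the lower-order mass terms $v,u$, the system \eqref{MTM} is invariant under $(u,v)(t,x)\mapsto\lambda^{1/2}(u,v)(\lambda t,\lambda x)$, which preserves $\|(u,v)\|_{L^2}$; hence $L^2$ is critical and a contraction estimate cannot come from the energy identity alone, but must exploit the null (transport) structure of the system. I would write \eqref{MTM} in Duhamel form with the free propagator $S(t)=e^{-itH}$, $H=-i\sigma_3\partial_x-\sigma_1$, which is unitary on $L^2(\mathbb{R};\mathbb{C}^2)$, and build a fixed point in a space $X_T\hookrightarrow C([0,T];L^2)$ carrying also mixed norms adapted to the two characteristic speeds $\pm1$ (in the spirit of $X^{s,b}$ or $U^2/V^2$ spaces). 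The analytic fact doing all the work is a bilinear $L^2$ estimate expressing the transversality of the characteristic families $\{x-t=c\}$ and $\{x+t=c\}$: writing $u$ for a right-mover ($i(u_t+u_x)=f$) and $v$ for a left-mover ($i(v_t-v_x)=g$), one passes to the null coordinates $\xi=x-t,\ \eta=x+t$, where the Jacobian is constant, and obtains
\begin{equation*}
\|uv\|_{L^2_{t,x}}\ \lesssim\ \bigl(\|u|_{t=0}\|_{L^2}+\|f\|_{L^1_tL^2_x}\bigr)\bigl(\|v|_{t=0}\|_{L^2}+\|g\|_{L^1_tL^2_x}\bigr),
\end{equation*}
together with $\|u\|_{L^\infty_tL^2_x}+\|u\|_{L^\infty_xL^2_t}\lesssim\|u|_{t=0}\|_{L^2}+\|f\|_{L^1_tL^2_x}$ and the analogues for $v$. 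The cubic term $u|v|^2=(uv)\bar v$ is then controlled by pairing the right-mover $u$ with one left-moving factor $v$ through this estimate and absorbing the remaining factor into $X_T$, at the cost of a positive power $T^\theta$; choosing $T$ as a function of $\|(u_0,v_0)\|_{L^2}$ only yields existence, uniqueness, and Lipschitz (hence continuous) dependence on $(u_0,v_0)$ on $[0,T]$.

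For globalization, observe that for smooth solutions, multiplying the first equation of \eqref{MTM} by $\bar u$ and the second by $\bar v$, taking imaginary parts and integrating in $x$, gives $\frac{d}{dt}\bigl(\|u\|_{L^2}^2+\|v\|_{L^2}^2\bigr)=0$, since $\int|u|^2|v|^2\,dx$ is real; by the continuous dependence just established this identity extends to all $L^2$ data, which is \eqref{l2-conservation}. Because the local existence time depends only on the now-conserved quantity $\|(u_0,v_0)\|_{L^2}$, the local solution can be continued on $[T,2T],[2T,3T],\dots$, and the reflection $t\mapsto-t$ covers negative times, producing $(u,v)\in C(\mathbb{R};L^2)$. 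For the $H^m$ statement with an integer $m\ge1$, I would differentiate \eqref{MTM} $m$ times; the top-order unknown solves the linear Dirac system with a source consisting of $\partial_x^m(u,v)$ multiplied by lower-order derivatives of $(u,v)$ plus terms involving only derivatives of order $\le m-1$, and since $m\ge1$ the embedding $H^1(\mathbb{R})\hookrightarrow L^\infty(\mathbb{R})$ controls the low-order factors; a Gronwall estimate bounds $\|(u,v)(t)\|_{H^m}$ on any finite interval, which, combined with the global $L^2$ solution, gives the global solution in $C(\mathbb{R};H^m)$, and continuous dependence in $H^m$ follows by the same contraction/regularization scheme.

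The hard part is the critical $L^2$ local theory: establishing the bilinear $L^2$ estimate in a robust enough form — including the inhomogeneous terms and the retarded Duhamel operator — to close the trilinear estimate for the cubic nonlinearity, and doing so in a function space in which the existence time is dictated purely by $\|(u_0,v_0)\|_{L^2}$. That last feature is exactly what lets the charge conservation \eqref{l2-conservation} promote the local solution to a global one, so it cannot be bypassed.
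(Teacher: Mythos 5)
This statement is not proved in the paper at all: Theorem \ref{theorem-Candy} is quoted verbatim from Candy's work \cite{Candy} and used as a black box, so there is no internal proof to compare your argument against. Judged on its own merits, your outline has the right architecture for a proof of this type (null-coordinate bilinear estimates for the critical local theory, conservation of charge, persistence of $H^m$ regularity by Gronwall), and the bilinear transversality estimate you state for free right- and left-movers is correct. The charge identity and the $H^m$ persistence step are also fine.

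However, there is a genuine gap at the decisive step, and it is not merely that the bilinear/trilinear estimates are asserted rather than proved. You close the contraction ``at the cost of a positive power $T^\theta$'' with constants depending only on $\|(u_0,v_0)\|_{L^2}$, and you rely on this to iterate. This is incompatible with the scale invariance you yourself invoke in the first paragraph: for the cubic (massless) part of the system, the rescaling $(u,v)(t,x)\mapsto \lambda^{1/2}(u,v)(\lambda t,\lambda x)$ preserves the $L^2$ norm, so a local existence time depending only on that norm would, by rescaling a fixed datum, immediately yield an infinite lifespan for that datum --- i.e.\ your claimed local statement already \emph{is} the global theorem, and no estimate of the form $\|\text{trilinear}\|\le C\,T^{\theta}\|\cdot\|^3$ with $C$ depending only on critical norms can hold for the scale-invariant cubic term (the mass term breaks scaling but is lower order and is not the obstruction). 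Consequently the ``globalization via charge conservation'' step, as you have set it up, is circular. In a genuinely critical problem the local time depends on the profile of the data, and global existence must come from an a priori \emph{global-in-time} bound on the relevant space--time quantity (here, control of $\|uv\|_{L^2_{t,x}}$ over the whole lifespan in terms of the conserved charge, exploiting the specific null structure of the MTM nonlinearity), which is exactly what the cited work supplies and what is missing from your sketch.
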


We are interested in orbital stability of  MTM solitons given by the explicit expressions
\begin{eqnarray}\label{onesol}
\left\{ \begin{array}{l}
u_{\lambda}(x,t) = i \delta^{-1} \sin(\gamma) \; \text{sech}\left[\alpha (x + \nu t)-i \frac{\gamma}{2} \right] \; e^{-i \beta (t + \nu x)}, \\
v_{\lambda}(x,t) = -i\delta \sin(\gamma) \; \text{sech}\left[\alpha (x + \nu t)+i \frac{\gamma}{2} \right] \; e^{-i \beta (t + \nu x)},
\end{array} \right.
\end{eqnarray}
where $\lambda$ is an arbitrary complex nonzero parameter that determines $\delta=|\lambda|$, $\gamma=2\mbox{Arg}(\lambda)$,
as well as
$$
\nu = \frac{\delta^{2} - \delta^{-2}}{\delta^{2} + \delta^{-2}}, \quad
\alpha = \frac{\delta^{2} + \delta^{-2}}{2} \sin \gamma, \quad
\beta = \frac{\delta^{2} + \delta^{-2}}{2} \cos \gamma.
$$
Let us now state the main result of our work.

\begin{thm} \label{maintheorem}
Let $(u,v)\in C(\mathbb{R};L^2(\mathbb{R}))$ be a solution of the MTM system (\ref{MTM})
in Theorem \ref{theorem-Candy} and $\lambda_0$ be a complex non-zero number. There exist a real positive
constant $\epsilon$ such that if the initial value $(u_0, v_0)\in L^2(\mathbb{R})$
satisfies
\begin{equation}
\label{bound-before}
\|u_0-u_{\lambda_0}(\cdot,0)\|_{L^2}+\|v_0-v_{\lambda_0}(\cdot,0)\|_{L^2} \leq \epsilon,
\end{equation}
then for every $t\in \mathbb{R}$, there exists $\lambda\in \mathbb{C}$ such that
\begin{equation}
\label{bound-parameters}
|\lambda-\lambda_0| \leq C \epsilon
\end{equation}
and
\begin{equation}
\label{bound-after}
\inf_{a,\theta\in \mathbb{R}}(\|u(\cdot+a,t)-e^{-i\theta}u_{\lambda}(\cdot,t)\|_{L^2}
+\|v(\cdot+a,t)-e^{-i\theta}v_{\lambda}(\cdot,t)\|_{L^2}) \leq C \epsilon,
\end{equation}
where the constant $C$ is independent of $\epsilon$ and $t$.
\end{thm}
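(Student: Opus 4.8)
The plan is to circumvent the inapplicability of energy--based methods by exploiting the integrability of \eqref{MTM} through its auto--B\"{a}cklund transformation. Write $\mathcal{B}_{\lambda}$ for the B\"{a}cklund (dressing) transformation with spectral parameter $\lambda \in \mathbb{C}$: given an MTM solution $(w,z)$ together with a vector solution $\varphi$ of the associated Lax system at $\lambda$, the dressing formulas produce a new MTM solution $\mathcal{B}_{\lambda}[(w,z)]$. Two structural facts will drive the argument. First, $\mathcal{B}_{\lambda}$ maps the zero solution onto the soliton orbit: $\mathcal{B}_{\lambda}[(0,0)]$ coincides with $(u_{\lambda},v_{\lambda})$ up to a spatial translation $a_{0}$ and a phase $\theta_{0}$ encoded in the normalization of $\varphi$. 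Second --- and this is what replaces the usual modulation equations --- $\mathcal{B}_{\lambda}$ \emph{intertwines the MTM flow}: since $\varphi$ may be propagated in time by the second Lax operator, dressing the data at $t=0$ and then evolving by \eqref{MTM} yields the same function as evolving $(w,z)$ and then dressing. On $C(\mathbb{R};L^2(\mathbb{R}))$ this identity is made rigorous by invoking the uniqueness in Theorem \ref{theorem-Candy}.

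Granting adequate control of $\mathcal{B}_{\lambda}$ at the $L^2$ level, the proof proceeds in four steps. \textbf{Step 1 (inverse dressing near the soliton).} Assuming \eqref{bound-before} with $\epsilon$ small, I would produce $\lambda\in\mathbb{C}$ with $|\lambda-\lambda_{0}|\le C\epsilon$ and a pair $(w_{0},z_{0})\in L^2(\mathbb{R})$ with $\|w_{0}\|_{L^2}+\|z_{0}\|_{L^2}\le C\epsilon$ such that $(u_{0},v_{0})=\mathcal{B}_{\lambda}[(w_{0},z_{0})]$: one solves the Lax system for the potential $(u_{0},v_{0})$, locates the eigenvalue $\lambda$ as a perturbation of the discrete eigenvalue $\lambda_{0}$ of the soliton together with the corresponding eigenfunction $\psi$ (a perturbation of the explicit soliton eigenfunction, found through a Volterra fixed--point formulation), and then reads $(w_{0},z_{0})$ off the reverse dressing formula; the stated bounds follow from Lipschitz dependence of $\lambda$, $\psi$ and $(w_{0},z_{0})$ on the potential, and the dressing formula does not degenerate since it does not for the soliton. \textbf{Step 2 (propagation of smallness).} Let $(w,z)\in C(\mathbb{R};L^2(\mathbb{R}))$ be the global solution of \eqref{MTM} with data $(w_{0},z_{0})$ given by Theorem \ref{theorem-Candy}; then \eqref{l2-conservation} yields $\|w(\cdot,t)\|_{L^2}^{2}+\|z(\cdot,t)\|_{L^2}^{2}=\|w_{0}\|_{L^2}^{2}+\|z_{0}\|_{L^2}^{2}\le C^{2}\epsilon^{2}$ for every $t\in\mathbb{R}$. \textbf{Step 3 (push--forward).} By the intertwining property together with uniqueness of the MTM solution with data $(u_{0},v_{0})$, one has $(u(\cdot,t),v(\cdot,t))=\mathcal{B}_{\lambda}[(w(\cdot,t),z(\cdot,t))]$ for all $t\in\mathbb{R}$. \textbf{Step 4 (Lipschitz estimate).} Using Lipschitz continuity of $\mathcal{B}_{\lambda}$ from a neighborhood of $(0,0)$ in $L^2(\mathbb{R})$ into $L^2(\mathbb{R})$, with a constant uniform for $\lambda$ near $\lambda_{0}$, and the fact that at time $t$ the function $\mathcal{B}_{\lambda}[(0,0)]$ equals $(e^{-i\theta_{0}}u_{\lambda}(\cdot+a_{0},t),\,e^{-i\theta_{0}}v_{\lambda}(\cdot+a_{0},t))$, one obtains
\[
\|u(\cdot,t)-e^{-i\theta_{0}}u_{\lambda}(\cdot+a_{0},t)\|_{L^2}+\|v(\cdot,t)-e^{-i\theta_{0}}v_{\lambda}(\cdot+a_{0},t)\|_{L^2}\le C\big(\|w(\cdot,t)\|_{L^2}+\|z(\cdot,t)\|_{L^2}\big)\le C\epsilon .
\]
Since the left-hand side is, after a change of variables, one admissible value of the infimum in \eqref{bound-after}, the bounds \eqref{bound-parameters} and \eqref{bound-after} follow. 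The constant $C$ is independent of $t$ because $\lambda$, $a_{0}$ and $\theta_{0}$ are determined once and for all at $t=0$ and are thereafter frozen by the intertwining, so no secular drift of the soliton parameters occurs; this is the structural advantage of the integrable approach over a modulation analysis.

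The crux is precisely the $L^2$ analysis of $\mathcal{B}_{\lambda}$ underlying Steps 1, 3 and 4: one must solve the Lax spectral problem with potentials only in $L^2(\mathbb{R})$, obtain existence and uniform bounds for the relevant eigenfunctions, and establish that $\mathcal{B}_{\lambda}$ is a locally bi--Lipschitz correspondence between neighborhoods of $(0,0)$ and of $(u_{\lambda_{0}}(\cdot,0),v_{\lambda_{0}}(\cdot,0))$ in $L^2(\mathbb{R})$, with the complex parameter $\lambda$ absorbing the shift of the eigenvalue. This is where integrability of \eqref{MTM} is used in an essential way; the remaining ingredients --- global existence, continuous dependence on data, and conservation of charge --- are exactly those supplied by Theorem \ref{theorem-Candy}, while translation and phase invariance of \eqref{MTM} enter only in identifying $\mathcal{B}_{\lambda}[(0,0)]$ as a point of the soliton orbit.
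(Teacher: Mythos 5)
Your outline reproduces the paper's strategy essentially verbatim: invert the auto--B\"{a}cklund transformation near the soliton at $t=0$ to obtain an $L^2$-small pair $(p_0,q_0)$ together with an eigenvalue $\lambda$ near $\lambda_0$, propagate smallness by the charge conservation \eqref{l2-conservation}, and dress back to a perturbed soliton for all $t$, with the translation and phase parameters absorbed into the normalization constants of the Lax eigenfunctions rather than into modulation equations. The only implementation points you leave implicit that the paper must (and does) supply are that locating $\lambda$ near the soliton requires a Lyapunov--Schmidt/bifurcation argument (a plain Volterra fixed point only works for the small potential in the forward dressing), and that the intertwining of $\mathcal{B}_{\lambda}$ with the flow is first justified for $H^2$ data, where the Lax compatibility condition makes classical sense, and then extended to $L^2$ by approximation using the uniformity of the bounds.
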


The proof of Theorem \ref{maintheorem} relies on the auto-B\"{a}cklund transformation of the MTM system
and perturbation analysis. Our approach follows the strategy used by Mizumachi and Pelinovsky in \cite{PM}
for  proving $L^2$-orbital stability of the NLS solitons. Their result was extended by
Contreras and Pelinovsky in \cite{CoP} to multi-solitons of the NLS equation by using a more general dressing
transformation. Furthermore, the recent work \cite{CP} of Cuccagna and Pelinovsky shows how
an asymptotic stability of the NLS solitons can be deduced by combining the auto-B\"{a}cklund transformation
and the nonlinear steepest descent method.

A similar type of ideas have been applied to other completely integrable systems.
We mention  for example the work \cite{MV} of  Merle and Vega where they prove $L^2$-stability
and asymptotic stability of the KdV solitons by using of the Miura transformation that
relates the KdV solitons to the kinks of the defocusing modified KdV equation. The work \cite{MP} of Mizumachi and Pego makes
use of a linearized B\"{a}cklund transformation to establish an asymptotic stability of Toda lattice
solitons. Hoffman and Wayne \cite{HW} formulated an abstract orbital stability result for
soliton solutions of integrable equations that can be achieved via B\"{a}cklund transformations.

In addition to an increasing popularity of the integrability techniques to study nonlinear
stability of soliton solutions, we note that such techniques become
particularly powerful for the MTM system.
Compared to the NLS equation, proofs of global existence and orbital stability of solitons in
the nonlinear Dirac equations (including the MTM system) are complicated by the fact that
the quadratic part of the Hamiltonian of the nonlinear Dirac equations is not bounded from neither above
nor below.  Consequently, there exist two bands of continuous spectrum of the linearized Dirac operator
for positive and negative energies, which extend to positive and negative infinities. For this reason,
proof of orbital stability of Dirac solitons poses a serious difficulty to the application of  standard energy arguments
 \cite{GSS}. There are, nevertheless, many works that deal with spectral
properties of linearized Dirac operators  \cite{BerCom, BouCom, ChPel-cme, ComPrep1, ComPrep2, ComPrep3, DerGot, KL1}.
Also, asymptotic stability of small solitary waves in the general nonlinear Dirac equations
has been considered \cite{KomKom, Kop, PS} (see \cite{Bou2006, Bou2008, BouCuc} for similar results in
the space of three dimensions).

Other than these works, not much is known about the orbital stability of Dirac solitons.
The recent work \cite{PS2} of Pelinovsky and Shimabukuro incorporates the inverse scattering method
to obtain an additional conserved quantity that can serve as a Lyapunov functional in the proof of
$H^1$-orbital stability of the MTM solitons. We continue to use the inverse scattering method in
this paper and prove $L^2$-orbital stability of the MTM solitons by a non-variational method
that relies on the auto-B\"{a}cklund transformation.

B\"{a}cklund transformations are used to generate solutions of a differential system, usually depending on a parameter,
from another solution of another differential system. When this transformation relates two different solutions of the same system,
it is called the auto-B\"{a}cklund transformation.  These transformations, when found, can be used to link the
orbital stability of a certain class of solutions to that of another class of solutions. In particular,
a stable neighborhood of the zero solution can be mapped to a stable neighborhood of one-soliton solution,
and vice verse. However, there is no systematic way to find such transformations and, to the best of our knowledge,
this is the first appearance of the auto-B\"{a}cklund transformation for the MTM system (\ref{MTM})
in the literature.

We note in passing that the inverse scattering method of the associated Kaup-Newell system \cite{KN}
has been extensively studied and the auto-B\"{a}cklund transformation of other
related integrable equations have been reported in the literature. In particular, the work \cite{Imai} of Imai
reports the Darboux transformation of the derivative nonlinear Schr\"{o}dinger equation
and claims that the Darboux transformation of the MTM system can be obtained similarly,
but no details are given. Furthermore, the Coleman correspondence between the MTM system and the sine-Gordon equation
is studied through the inverse scattering transform \cite{KN,O}
and this may also yield another derivation of the auto--B\"{a}cklund transformation for the MTM system
because the auto--B\"{a}cklund transformation of the sine--Gordon equation is well known. For our purposes,
we derive the auto--B\"{a}cklund transformation of the MTM system by using Ricatti equations and symbolic computations.

The paper is organized as follows. Section 2 introduces the auto-B\"{a}cklund transformation for the MTM system
(\ref{MTM}) and uses it to recover the MTM solitons (\ref{onesol}) from zero solutions.
We also list the Lorentz transformation
for the MTM system (\ref{MTM}) and outline the steps in the proof of Theorem \ref{maintheorem}. Section 3
reports details of the transformation from perturbed one-soliton solutions to small solutions
at the initial time $t = 0$. Section 4 describes the transformation from small solutions
to perturbed one-soliton solutions for all times $t \in \mathbb{R}$.
Section 5 completes the proof of Theorem \ref{maintheorem}.

\section{B\"{a}cklund transformation for the MTM system}

We begin by introducing the Lax pair and the auto-B\"{a}cklund transformation of the MTM system (\ref{MTM})
in the laboratory coordinates. Then, we give the Lorentz transformation for the MTM system
(\ref{MTM}) and outline the steps in the proof of Theorem \ref{maintheorem}.

The Lax pair of the MTM system (\ref{MTM}) is defined in terms of the following two linear operators:
\begin{equation} \label{lablax1}
L=\frac{i}{4}(|u|^2-|v|^2)\sigma_3-\frac{i\lambda}{2}\left(\begin{matrix} 0 & \overline{v} \\ v & 0 \end{matrix}\right)
+ \frac{i}{2\lambda} \left(\begin{matrix} 0 & \overline{u} \\ u & 0 \end{matrix}\right) +\frac{i}{4}\left(\lambda^2-\frac{1}{\lambda^2}\right)\sigma_3
\end{equation}
and
\begin{equation} \label{lablax2}
A=-\frac{i}{4}(|u|^2+|v|^2)\sigma_3-\frac{i\lambda}{2}\left(\begin{matrix} 0 & \overline{v} \\ v & 0 \end{matrix}\right) - \frac{i}{2\lambda} \left(\begin{matrix} 0 & \overline{u} \\ u & 0 \end{matrix}\right) +\frac{i}{4}\left(\lambda^2+\frac{1}{\lambda^2}\right)\sigma_3.
\end{equation}
The formal compatibility condition $\vec{\phi}_{xt}=\vec{\phi}_{tx}$ for the system of linear equations
\begin{equation} \label{laxeq}
\vec{\phi}_x=L\vec{\phi}\quad  \mbox{and}\quad \vec{\phi}_t=A\vec{\phi}
\end{equation}
yields the MTM system (\ref{MTM}).

Note that the solution $(u,v)$ of the MTM system (\ref{MTM}) appears as coefficients of differential equations
in the linear system \eqref{laxeq}. The auto-B\"{a}cklund transformation relates two solutions of
the MTM system (\ref{MTM}) while preserving the linear system \eqref{laxeq}.
Now let us state the auto-B\"{a}cklund transformation.

\begin{prop} \label{backlund}
Let $(u,v)$ be a $C^1$ solution of the MTM system (\ref{MTM}) and
$\vec{\phi}=(\phi_1,\phi_2)^t$ be a $C^2$ nonzero solution of the linear
system \eqref{laxeq} associated with the potential $(u,v)$ and the spectral
parameter $\lambda = \delta e^{i \gamma/2}$. Then, the following transformation
\begin{equation}\label{backu}
\mathbf{u} = -u \frac{e^{-i\gamma/2}|\phi_1|^2+
e^{i\gamma/2}|\phi_2|^2}{e^{i\gamma/2}|\phi_1|^2+e^{-i\gamma/2}|\phi_2|^2}+
\frac{2i \delta^{-1} \sin\gamma \overline{\phi}_1\phi_2}{e^{i\gamma/2}|\phi_1|^2+e^{-i\gamma/2}|\phi_2|^2}
\end{equation}
and
\begin{equation}\label{backv}
\mathbf{v} = -v \frac{e^{i\gamma/2}|\phi_1|^2+
e^{-i\gamma/2}|\phi_2|^2}{e^{-i\gamma/2}|\phi_1|^2+e^{i\gamma/2}|\phi_2|^2}-
\frac{2i\delta \sin\gamma \overline{\phi}_1\phi_2}{e^{-i\gamma/2}|\phi_1|^2+e^{i\gamma/2}|\phi_2|^2},
\end{equation}
generates a new $C^1$ solution of the MTM system (\ref{MTM}).
Furthermore, the transformation
\begin{equation} \label{defpsi}
\psi_1=\frac{\overline{\phi}_2}{|e^{i\gamma/2}|\phi_1|^2+e^{-i\gamma/2}|\phi_2|^2|}, \quad \psi_2=\frac{\overline{\phi}_1}{|e^{i\gamma/2}|\phi_1|^2+e^{-i\gamma/2}|\phi_2|^2|}
\end{equation}
yields a new $C^2$ nonzero solution $\vec{\psi} = (\psi_1, \psi_2)^t$ of the linear system \eqref{laxeq}
associated with the new potential $(\mathbf{u},\mathbf{v})$ and the same spectral parameter $\lambda$.
\end{prop}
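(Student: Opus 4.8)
The plan is to prove both assertions by direct substitution, with the computation organized through the Riccati equation for the ratio of the components of $\vec{\phi}$; this is the ``symbolic computation'' anticipated in the introduction.

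First I would record the facts that make the statement meaningful. A solution $\vec{\phi}$ of the linear system \eqref{laxeq} that is not identically zero is nonzero at every point $(x,t)$, by uniqueness for linear ODEs, so $|\phi_1(x,t)|^2+|\phi_2(x,t)|^2>0$ everywhere. The common denominator $D:=e^{i\gamma/2}|\phi_1|^2+e^{-i\gamma/2}|\phi_2|^2$ appearing in \eqref{backu}--\eqref{backv} has real part $\cos(\gamma/2)\bigl(|\phi_1|^2+|\phi_2|^2\bigr)$, hence vanishes nowhere as soon as $\cos(\gamma/2)\neq0$, which is the case in the regime relevant to Theorem \ref{maintheorem} (the MTM soliton \eqref{onesol} degenerates when $\gamma\in\{0,\pi\}$). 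Since $\vec{\phi}\in C^2$, $u,v\in C^1$, and $1/D\in C^2$, the right-hand sides of \eqref{backu}--\eqref{backv} are $C^1$; and since $D\overline{D}$ is a strictly positive $C^2$ function, $|D|=\sqrt{D\overline{D}}\in C^2$, so $\vec{\psi}$ in \eqref{defpsi} is $C^2$ and is nowhere zero because its two components $\overline{\phi}_2/|D|$ and $\overline{\phi}_1/|D|$ cannot vanish simultaneously.

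For the first assertion, on the open set $\{\phi_1\neq0\}$ I would put $\Gamma:=\phi_2/\phi_1\in C^2$; reading off the $2\times2$ matrices in \eqref{lablax1}--\eqref{lablax2}, the equations $\vec{\phi}_x=L\vec{\phi}$ and $\vec{\phi}_t=A\vec{\phi}$ yield scalar Riccati equations $\Gamma_x=L_{21}+(L_{22}-L_{11})\Gamma-L_{12}\Gamma^2$ and $\Gamma_t=A_{21}+(A_{22}-A_{11})\Gamma-A_{12}\Gamma^2$. Dividing numerators and denominators in \eqref{backu}--\eqref{backv} by $|\phi_1|^2$ makes $\mathbf{u}$ and $\mathbf{v}$ explicit rational functions of $\Gamma,\overline{\Gamma},u,v$ and the constants $\delta,\gamma$. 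I would substitute these into the left-hand sides of \eqref{MTM}, eliminate every first derivative of $\Gamma$ and $\overline{\Gamma}$ by the two Riccati equations and their conjugates, eliminate the derivatives of $u,v$ by \eqref{MTM} for $(u,v)$, and check that what remains is identically zero as a rational function of $\Gamma,\overline{\Gamma},u,v$. Running the same computation on $\{\phi_2\neq0\}$ with $\widetilde{\Gamma}:=\phi_1/\phi_2$ covers the rest of $\mathbb{R}^2$, since $\vec{\phi}$ is pointwise nonzero (equivalently, the left-hand sides of \eqref{MTM} evaluated at $(\mathbf{u},\mathbf{v})$ are continuous and vanish on a dense open set).

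For the second assertion I would substitute $\psi_1=\overline{\phi}_2/|D|$, $\psi_2=\overline{\phi}_1/|D|$ into $\vec{\psi}_x=\mathbf{L}\vec{\psi}$ and $\vec{\psi}_t=\mathbf{A}\vec{\psi}$, where $\mathbf{L},\mathbf{A}$ are \eqref{lablax1}--\eqref{lablax2} with $(u,v)$ replaced by $(\mathbf{u},\mathbf{v})$ and with the same $\lambda$, using $\partial_x\overline{\phi}_j=\overline{(L\vec{\phi})_j}$ and $\partial_t\overline{\phi}_j=\overline{(A\vec{\phi})_j}$, the derivatives of $|D|$ obtained from the Lax equations, and the formulas \eqref{backu}--\eqref{backv} just verified; once more this reduces to an algebraic identity. (Conceptually it holds because the Lax operators obey the discrete symmetry $\sigma_1\overline{L(\lambda;u,v)}\,\sigma_1=L(-\overline{\lambda};u,v)$, with $\sigma_1=\bigl(\begin{smallmatrix}0&1\\1&0\end{smallmatrix}\bigr)$, and likewise for $A$, so that $\sigma_1\overline{\vec{\phi}}$ solves \eqref{laxeq} at $-\overline{\lambda}$ and \eqref{defpsi} is the normalized eigenfunction produced by the associated Darboux dressing; but a direct check is cleaner.) The main obstacle is bookkeeping rather than ideas: the two verifications are long rational-function identities, most safely confirmed with computer algebra, and only a little care is needed for the regularity claims and for the non-vanishing of $D$, which is automatic in the parameter range used in Theorem \ref{maintheorem}.
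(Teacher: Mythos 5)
Your proposal is correct and follows essentially the same route as the paper: reduce the Lax system to Riccati equations for the ratio of the components of $\vec{\phi}$, verify the transformation by a (computer-assisted) rational-function identity, and check the second assertion by direct substitution. The only differences are cosmetic improvements in rigor --- you work on both charts $\{\phi_1\neq0\}$ and $\{\phi_2\neq0\}$ (the paper uses only $\Gamma=\phi_1/\phi_2$) and you explicitly justify the non-vanishing of the denominator $e^{i\gamma/2}|\phi_1|^2+e^{-i\gamma/2}|\phi_2|^2$ via its real part, a point the paper leaves implicit.
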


\begin{proof}
Setting $\Gamma = \phi_1/\phi_2$ in the linear system (\ref{laxeq}) with Lax operators
\eqref{lablax1} and \eqref{lablax2} yields the Riccati equations
\begin{equation} \label{xriccati}
\left\{ \begin{array}{l}
\Gamma_x=2i(\rho_2^2-\rho_1^2)\Gamma+\frac{i}{2}(|u|^2-|v|^2)\Gamma + i(\rho_2v-\rho_1u)\Gamma^2-i(\rho_2\overline{v}-\rho_1\overline{u}),\\
\Gamma_t=2i(\rho_2^2+\rho_1^2)\Gamma-\frac{i}{2}(|u|^2+|v|^2)\Gamma + i(\rho_2v+\rho_1u)\Gamma^2-i(\rho_2\overline{v}+\rho_1\overline{u}),
\end{array} \right.
\end{equation}
where $\rho_1=\frac{1}{2 \lambda}$ and $\rho_2=\frac{\lambda}{2}$. If we choose $\Gamma' :=\frac{1}{\overline{\Gamma}}$,
$\mathbf{u} := M(\Gamma; \rho_1)f(\Gamma;u,\rho_1)$, and $\mathbf{v} := M(\Gamma;\rho_2)f(\Gamma; v,\rho_2)$ with
$$
M(\Gamma; k) = -\frac{k|\Gamma|^2+\overline{k}}{\overline{k}|\Gamma|^2+k}, \quad
f(\Gamma;q,k) = q + \frac{4i \mbox{Im}(k^2) \overline{\Gamma}}{k|\Gamma|^2+\overline{k}},
$$
then the Riccati equations \eqref{xriccati} remain invariant in variables $\Gamma'$, $\mathbf{u}$, and $\mathbf{v}$.
The transformation formulas above yield representation \eqref{backu} and \eqref{backv}.
Note that if $\vec{\phi} = \vec{0}$ at one point $(x_0,t_0)$, then $\vec{\phi} = \vec{0}$ for all $(x,t)$.
If $(u,v)$ is $C^1$ in $(x,t)$, $\vec{\phi}$ is $C^2$ in $(x,t)$, and $\vec{\phi} \neq \vec{0}$, then
$(\mathbf{u},\mathbf{v})$ is $C^1$ for every $x \in \mathbb{R}$ and $t \in \mathbb{R}$.

The validity of the transformation (\ref{defpsi}) has been verified with Wolfram's Mathematica. Again, if
$\vec{\phi}$ is $C^2$ in $(x,t)$ and $\vec{\phi} \neq \vec{0}$, then
$\vec{\psi}$ is $C^2$ and $\vec{\psi} \neq \vec{0}$ for every $x \in \mathbb{R}$ and $t \in \mathbb{R}$.
\end{proof}

Let us denote the transformations \eqref{backu}--\eqref{backv} by $\mathcal{B}$, hence
$$
\mathcal{B}: (u,v,\vec{\phi},\lambda)\mapsto (\mathbf{u},\mathbf{v}),
$$
where $\vec{\phi}$ is a corresponding vector of the linear system \eqref{laxeq}
associated with the potential $(u,v)$ and the spectral parameter $\lambda$.

In the simplest example, the MTM soliton (\ref{onesol}) is recovered by the
transformations \eqref{backu} and \eqref{backv} from the zero solution $(u,v)=(0,0)$, that is,
$$
\mathcal{B}: (0,0,\vec{\phi},\lambda)\mapsto (u_{\lambda},v_{\lambda}).
$$
Indeed, a solution satisfying the linear system \eqref{laxeq} with $(u,v)=(0,0)$ is given by
\begin{equation}
\label{phi-sol}
\left\{ \begin{array}{l}
\phi_1(x,t) = e^{\frac{i}{4}(\lambda^2-\lambda^{-2})x+\frac{i}{4}(\lambda^2+\lambda^{-2})t}, \\
\phi_2(x,t) = e^{-\frac{i}{4}(\lambda^2-\lambda^{-2})x-\frac{i}{4}(\lambda^2+\lambda^{-2})t}. \end{array} \right.
\end{equation}
Substituting this expression into \eqref{backu} and \eqref{backv} yields $(\mathbf{u},\mathbf{v})=(u_{\lambda},v_{\lambda})$
given by (\ref{onesol}).

Another important example is a transformation from the MTM solitons (\ref{onesol}) to the zero solution.
We shall only give the explicit expressions of this transformation for the case $|\lambda| = \delta = 1$.
By \eqref{defpsi} and \eqref{phi-sol}, we can find the vector $\vec{\psi}$ solving the linear system
\eqref{laxeq} with $(u_{\lambda},v_{\lambda})$ given by (\ref{onesol}).
When $\lambda = e^{i \gamma/2}$, the vector $\vec{\psi}$ is given by
\begin{equation}
\label{psi-sol}
\left\{ \begin{array}{l}
\psi_1(x,t) = e^{\frac{1}{2} x \sin\gamma + \frac{i}{2} t \cos \gamma} \left| \sech\left(x \sin\gamma - i\frac{\gamma}{2}\right) \right|, \\
\psi_2(x,t) = e^{-\frac{1}{2} x \sin\gamma - \frac{i}{2} t \cos \gamma} \left| \sech\left(x \sin\gamma - i\frac{\gamma}{2}\right) \right|.
\end{array} \right.
\end{equation}
We note that $\vec{\psi}$ has exponential decay as $|x| \to \infty$ and, therefore, it is an eigenvector
of the linear system \eqref{laxeq} for the eigenvalue $\lambda = e^{i \gamma/2}$.
Substituting the eigenvector $\vec{\psi}$ into the transformation
(\ref{backu}) and (\ref{backv}), we obtain the zero solution
from the MTM soliton, that is,
$$
\mathcal{B}: (u_{\lambda},v_{\lambda},\vec{\psi},\lambda)\mapsto (0,0).
$$

When $|\lambda| = \delta = 1$ for $(u_{\lambda},v_{\lambda})$ given by \eqref{onesol}, we realize that $\nu=0$
and hence the MTM solitons (\ref{onesol}) are stationary. Travelling MTM solitons with $\nu \neq 0$
can be recovered from the stationary MTM solitons with $\nu = 0$ by the Lorentz transformation.
Hence, without loss of generality, we can choose $\lambda_0 = e^{i\gamma_0/2}$ for a fixed
$\gamma_0 \in(0,\pi)$ in Theorem \ref{maintheorem}. Let us state the Lorentz transformation,
which can be verified with the direct substitutions.

\begin{prop}
Let $(u,v)$ be a solution of the MTM system (\ref{MTM}) and
let $\vec{\phi}$ be a solution of the linear system (\ref{laxeq})
associated with $(u,v)$ and $\lambda = e^{i\gamma/2}$. Then,
\begin{eqnarray}\label{lorentz}
\left\{ \begin{array}{l}
u'(x,t) := \delta^{-1} u(k_1 x + k_2 t, k_1 t + k_2 x),\\
v'(x,t) := \delta v(k_1 x + k_2 t, k_1 t + k_2 x),\end{array}\right. \quad
k_1 := \frac{\delta^2 + \delta^{-2}}{2}, \quad k_2 := \frac{\delta^2 - \delta^{-2}}{2},
 \end{eqnarray}
is a new solution of the same MTM system (\ref{MTM}), whereas
\begin{equation}
\vec{\phi}'(x,t) := \vec{\phi}(k_1 x + k_2 t, k_1 t + k_2 x),
\end{equation}
is a new solution of the linear system (\ref{laxeq}) associated with $(u',v')$
and $\lambda = \delta e^{i\gamma/2}$. \label{prop-Lorentz}
\end{prop}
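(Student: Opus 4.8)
The plan is to establish both assertions by direct substitution, as the statement indicates. The whole computation rests on two elementary identities satisfied by the Lorentz parameters, namely $k_1+k_2 = \delta^2$ and $k_1-k_2 = \delta^{-2}$ (equivalently $k_1^2-k_2^2 = 1$), which reflect the hyperbolic-rotation nature of the change of variables $(x,t)\mapsto(\xi,\tau)$ with $\xi := k_1 x + k_2 t$ and $\tau := k_1 t + k_2 x$.

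First I would check the MTM system. Writing $u'(x,t) = \delta^{-1} u(\xi,\tau)$ and $v'(x,t) = \delta v(\xi,\tau)$, the chain rule together with $k_1\pm k_2 = \delta^{\pm2}$ gives $\partial_t u' + \partial_x u' = \delta\,(\partial_T u + \partial_X u)\big|_{(\xi,\tau)}$ and $\partial_t v' - \partial_x v' = \delta^{-1}\,(\partial_T v - \partial_X v)\big|_{(\xi,\tau)}$. Substituting the two equations of (\ref{MTM}) evaluated at $(\xi,\tau)$ and using $|u'|^2 = \delta^{-2}|u(\xi,\tau)|^2$, $|v'|^2 = \delta^2|v(\xi,\tau)|^2$, all powers of $\delta$ cancel and the MTM system for $(u',v')$ is recovered; this step is entirely routine.

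For the linear system, applying the chain rule to $\vec\phi'(x,t) = \vec\phi(\xi,\tau)$ and using $\vec\phi_\xi = L\vec\phi$, $\vec\phi_\tau = A\vec\phi$ (with the original potential $(u,v)$ and $\lambda = e^{i\gamma/2}$, all evaluated at $(\xi,\tau)$) yields $\vec\phi'_x = (k_1 L + k_2 A)\vec\phi$ and $\vec\phi'_t = (k_2 L + k_1 A)\vec\phi$. It then suffices to verify the operator identities
\begin{equation*}
k_1 L(u,v;\lambda) + k_2 A(u,v;\lambda) = L(u',v';\lambda'), \qquad k_2 L(u,v;\lambda) + k_1 A(u,v;\lambda) = A(u',v';\lambda'),
\end{equation*}
where the right-hand sides carry the rescaled potential $(u',v')$ and the boosted spectral parameter $\lambda' = \delta\lambda = \delta e^{i\gamma/2}$. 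I would match the four structural pieces of (\ref{lablax1})--(\ref{lablax2}) term by term: the $|u|^2$ and $|v|^2$ diagonal terms recombine through $k_1\mp k_2 = \delta^{\pm2}$ into $\frac{i}{4}(|u'|^2\mp|v'|^2)\sigma_3$; the $v$-off-diagonal blocks acquire the factor $(k_1+k_2)\,\delta^{-1} = \delta$, turning $\lambda$ into $\delta\lambda = \lambda'$; the $u$-off-diagonal blocks acquire $(k_1-k_2)\,\delta = \delta^{-1}$, turning $\lambda^{-1}$ into $\delta^{-1}\lambda^{-1} = (\lambda')^{-1}$; and the constant diagonal terms combine into $\delta^{\pm2}\lambda^{\pm2} = (\lambda')^{\pm2}$. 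Both identities follow, and hence $\vec\phi'$ solves (\ref{laxeq}) with potential $(u',v')$ and parameter $\lambda'$.

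The computation is mechanical, so there is no genuine obstacle; the only bookkeeping hazard is the asymmetry between the scaling of the two potentials ($u'\sim\delta^{-1}u$ versus $v'\sim\delta v$) and the single scaling of the spectral parameter ($\lambda'=\delta\lambda$). One must pair each off-diagonal block with the correct power of $\delta$ produced by $k_1\pm k_2$ so that the $\lambda$- and $\lambda^{-1}$-dependences transform consistently; once the conventions for $(\xi,\tau)$ and $\lambda'$ are fixed, everything closes.
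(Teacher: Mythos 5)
Your proposal is correct and is precisely the ``direct substitution'' verification that the paper invokes without writing out: the chain rule plus the identities $k_1\pm k_2=\delta^{\pm 2}$ handle the MTM system, and the term-by-term matching $k_1L+k_2A=L(u',v',\delta\lambda)$, $k_2L+k_1A=A(u',v',\delta\lambda)$ handles the Lax pair. Since the paper supplies no further argument, there is nothing to compare beyond noting that your write-up fills in the computation it leaves to the reader.
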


The stationary MTM solitons at $t = 0$ can be written by using the expressions
\begin{equation}\label{Soliton}
\left\{ \begin{array}{l}
u_{\gamma}(x) = i\sin \gamma \sech\left(x\sin\gamma - i \frac{\gamma}{2} \right), \\
v_{\gamma}(x) = -i\sin\gamma \sech\left(x\sin\gamma + i \frac{\gamma}{2} \right),\end{array} \right.
\end{equation}
that depend on the parameter $\gamma \in (0,\pi)$.
The time oscillation, gauge translation, and space translation can be included
with the help of the transformation
\begin{equation}\label{Soliton-Transformation}
\left\{ \begin{array}{l}
u(x,t) = e^{i\theta-it \cos\gamma} u_{\gamma}(x+a), \\
v(x,t) = e^{i\theta-it \cos\gamma} v_{\gamma}(x+a),\end{array} \right.
\end{equation}
where $\theta,a \in \mathbb{R}$ are two translational parameters of the stationary MTM solitons.

Let us now describe the ideas of our method for the proof of Theorem \ref{maintheorem}.
First we clarify some notations: $(u_{\gamma_0},v_{\gamma_0})$
denotes one-soliton solution given by \eqref{Soliton} with a fixed $\gamma_0\in(0,\pi)$,
$\vec{\psi}_{\gamma_0}$ denotes the corresponding eigenvector given by (\ref{psi-sol}) for $t = 0$,
whereas $L(u,v,\lambda)$ and $A(u,v,\lambda)$ denote the Lax operators $L$ and $A$ that contain
$(u,v)$ and a spectral parameter $\lambda$.

The main steps for the proof of Theorem \ref{maintheorem} are the following.
First, we fix an initial data $(u_0,v_0)\in H^2(\mathbb{R})$ such that $(u_0,v_0)$ is
sufficiently close to $(u_{\gamma_0},v_{\gamma_0})$ in $L^2$-norm,
according to the bound (\ref{bound-before}).

\vspace{0.2cm}

\emph{Step 1: From a perturbed one-soliton solution to a small solution at $t=0$.}
In this step, we need to study the vector solution $\vec{\psi}$ of
the linear equation
\begin{equation}
\label{L-introduction}
\partial_x\vec{\psi}=L(u_0,v_0,\lambda)\vec{\psi} \quad \mbox{\rm at time} \; t=0.
\end{equation}
In addition to proving the existence of an exponentially decaying solution $\vec{\psi}$ of the linear equation
(\ref{L-introduction}) for an eigenvalue $\lambda$, we need to prove that if $(u_0,v_0)$ is close to
$(u_{\gamma_0},v_{\gamma_0})$ in $L^2$-norm, then $\vec{\psi}$ is close to $\vec{\psi}_{\gamma_0}$ in $H^1$-norm
and $\lambda$ is close to $e^{i\gamma_0/2}$. Parameter $\lambda$ in bound (\ref{bound-parameters}) is now determined by
the eigenvalue of the linear equation (\ref{L-introduction}).

The earlier example of obtaining the zero solution from the one-soliton solution gives
a good insight that the auto-B\"{a}cklund transformation given by
Proposition \ref{backlund} produces a function  $(p_0,q_0)$ in an $L^2$-neighborhood of the zero solution at $t=0$,
\begin{equation}\label{pq0}
\mathcal{B} : (u_0,v_0,\vec{\psi},\lambda) \mapsto (p_0,q_0).
\end{equation}

\vspace{0.2cm}

\emph{Step 2: Time evolution of the transformed solution.}
Since $(p_0,q_0)$ is close to the zero solution in $L^2$-norm,
then so is $(p(\cdot,t),q(\cdot,t))$ for all $t \in \mathbb{R}$,
thanks to the $L^2$-conservation (\ref{l2-conservation}) of the MTM system (\ref{MTM})
in Theorem \ref{theorem-Candy}.

\vspace{0.2cm}

\emph{Step 3: From a small solution to a perturbed one-soliton solution for all times $t\in \mathbb{R}$.}
In this step, we are interested in the existence problem of the vector function $\vec{\phi}$ that solves the linear system
\begin{equation}
\label{LL}
\partial_x\vec{\phi}=L(p,q,\lambda)\vec{\phi}, \quad
\partial_t\vec{\phi}=A(p,q,\lambda)\vec{\phi}
\end{equation}
where $(p,q)$ is the solution of the MTM system (\ref{MTM}) starting with
the initial data $(p,q) |_{t = 0} = (p_0,q_0)$ in $H^2(\mathbb{R})$.
Using the vector $\vec{\phi}$ and the auto-B\"{a}cklund transformation given by Proposition \ref{backlund},
we obtain a solution $(u,v)$ of the MTM system (\ref{MTM})
in an $L^2$-neighborhood of one-soliton solution:
\begin{equation}
\label{BB}
\mathcal{B} : (p,q,\vec{\phi},\lambda) \mapsto (u,v).
\end{equation}
Some translational parameter $a$ and $\theta$ arise in the construction
of the most general solution of the linear system (\ref{LL}). Bound (\ref{bound-after})
is found from the analysis of the auto--B\"{a}cklund transformation (\ref{BB}).

\vspace{0.2cm}

To summarize here, there are three key ingredients along the way: mapping an $L^2$-neighborhood of
one-soliton solution to that of the zero solution at $t = 0$, the $L^2$-conservation of the MTM system,
and mapping an $L^2$-neighborhood of the zero solution to that of the one-soliton solution for every $t \in \mathbb{R}$.
As a result, if the initial data is sufficiently close to the one-soliton solution in
$L^2$ as in the initial bound (\ref{bound-before}), then the solution of the MTM system remains
close to the one-soliton solution in $L^2$ for all times as in the final bound (\ref{bound-after}).
A schematic picture is as follows:

{\centering
 \begin{tikzpicture}[descr/.style={fill=white, inner sep=2.5pt}]
 \matrix (m) [matrix of math nodes, row sep=3em, column sep=3em]
 {(u_0,v_0) & & (u,v)\\
 (p_0,q_0) & & (p,q) \\};
 \path[->, font=\scriptsize]
 (m-1-1) edge node[auto] {$\mathcal{B}$} (m-2-1)
 (m-2-1) edge (m-2-3)
  (m-2-3) edge node[auto] {$\mathcal{B}$} (m-1-3);
  \path[dotted] (m-1-1) edge (m-1-3);
\end{tikzpicture}
\par}
From a technical point, perturbation theory and the method of Lyapunov-Schmidt
reductions are used to obtain a mapping from the neighborhood of one-soliton
solutions to that of the zero solution, and a standard fixed-point argument
yields the  passage from the zero solution to one-soliton solutions.

Finally, we can remove the technical assumption that $(u_0,v_0) \in H^2(\mathbb{R})$ by
an approximation argument in $L^2(\mathbb{R})$ thanks to the fact that the bounds
(\ref{bound-parameters}) and (\ref{bound-after}) are found to be uniform for the
approximating sequence associated with the initial data $(u_0,v_0) \in L^2(\mathbb{R})$.

We note that the solution $(p,q)$ of the MTM system (\ref{MTM})
in a $L^2$-neighborhood of the zero solution could contain
some $L^2$-small MTM solitons, which are related to the discrete spectrum
of the linear system (\ref{LL}).
Sufficient conditions for the absence of the discrete spectrum were derived
in \cite{Pel-survey}, and the $L^2$ smallness of the initial data is not
generally sufficient for excluding eigenvalues of the discrete spectrum. If the small solitons
occur in the Cauchy problem, asymptotic decay of solutions to the MTM solitons
(\ref{onesol}) can not be proved, in other words, $(p,q)$ do not decay to $(0,0)$ in $L^{\infty}$-norm
as $t \to \infty$. Therefore, a more restrictive hypothesis on the initial data is
generally needed to establish asymptotic stability of MTM solitons.
See \cite{CP} for restrictions on initial data required in the proof of
asymptotic stability of NLS solitons.

We also note that modulation equations for parameters $a$ and $\theta$ in
Theorem \ref{maintheorem} are not included in this approach. This can be viewed
as an advantage of the auto-B\"{a}cklund transformation,which does not rely on the global
control of the dynamics of $a$ and $\theta$ in the modulation equations.
Values of $a$ and $\theta$ are related to arbitrary constants that appear in the
construction of $\vec{\phi}$ as a solution of the linear system (\ref{LL}).
These values are eliminated in the infimum norm stated in the orbital stability result
(\ref{bound-after}) in Theorem \ref{maintheorem}.

\section{From a perturbed one-soliton solution to a small solution}

Here we use the auto-B\"{a}cklund transformation given by Proposition \ref{backlund} to transform
a $L^2$-neighborhood of one-soliton solution to that of the zero solution at $t=0$.
Let $(u_0,v_0) \in L^2(\mathbb{R})$ be the initial data of the MTM system (\ref{MTM})
satisfying bound (\ref{bound-before}) for $\lambda_0 = e^{i \gamma_0/2}$.
Let $\vec{\psi}$ be a decaying eigenfunction of the spectral problem
\begin{equation} \label{laxeq2}
\partial_x \vec{\psi} = L(u_0,v_0,\lambda) \vec{\psi},
\end{equation}
for an eigenvalue $\lambda$. First, we show that under the condition (\ref{bound-before}),
an eigenvector $\vec{\psi}$ always exists and $\lambda$ is close to $\lambda_0$.
Then, we write $\lambda = \delta e^{i \gamma/2}$ and define
\begin{equation}\label{up}
p_0 := -u_0 \frac{e^{-i\gamma/2}|\psi_1|^2+e^{i\gamma/2}|\psi_2|^2}{e^{i\gamma/2}|\psi_1|^2+e^{-i\gamma/2}|\psi_2|^2} +
\frac{2i \delta^{-1} \sin\gamma \overline{\psi}_1\psi_2}{e^{i\gamma/2}|\psi_1|^2+e^{-i\gamma/2}|\psi_2|^2}
\end{equation}
and
\begin{equation}\label{vq}
q_0 := -v_0 \frac{e^{i\gamma/2}|\psi_1|^2+e^{-i\gamma/2}|\psi_2|^2}{e^{-i\gamma/2}|\psi_1|^2+e^{i\gamma/2}|\psi_2|^2}-
\frac{2i \delta \sin\gamma \overline{\psi}_1\psi_2}{e^{-i\gamma/2}|\psi_1|^2+e^{i\gamma/2}|\psi_2|^2}.
\end{equation}
We intend to show that $(p_0,q_0)$ is small in $L^2$ norm.

When $(u_0,v_0) = (u_{\gamma_0},v_{\gamma_0})$, the spectral problem \eqref{laxeq2}
has exactly one decaying eigenvector $\vec{\psi}$ given by
\begin{equation}
\label{psi-sol-again}
\left\{ \begin{array}{l}
\psi_1(x) = e^{\frac{1}{2} x \sin\gamma_0} \left| \sech\left(x \sin\gamma_0 - i\frac{\gamma_0}{2}\right) \right|, \\
\psi_2(x) = e^{-\frac{1}{2} x \sin\gamma_0} \left| \sech\left(x \sin\gamma_0 - i\frac{\gamma_0}{2}\right) \right|,
\end{array} \right.
\end{equation}
which corresponds to the eigenvalue $\lambda = \lambda_0 = e^{i \gamma_0/2}$.
The other linearly independent solution $\vec{\xi}$ of the spectral problem \eqref{laxeq2}
with $\lambda = \lambda_0$ is given by
\begin{eqnarray}
\label{eigfcn}
\left\{ \begin{array}{l}
\xi_1(x) = e^{\frac{1}{2} x \sin\gamma_0}(e^{-2x \sin\gamma_0}-x\sin(2\gamma_0)) |\sech\left(x \sin\gamma_0 -i\frac{\gamma_0}{2}\right)|, \\
\xi_2(x) = -e^{-\frac{1}{2} x \sin\gamma_0}(e^{2x\sin\gamma_0}+2\cos\gamma_0 + x\sin(2\gamma_0)) |\sech\left(x \sin\gamma_0 - i\frac{\gamma_0}{2}\right)|.
\end{array} \right.
\end{eqnarray}
This solution grows exponentially as $|x| \to \infty$.
Therefore, $\dim\ker(\partial_x-L(u_{\gamma_0},v_{\gamma_0},\lambda_0))=1$. For clarity,
we denote the decaying eigenvector (\ref{psi-sol-again}) by $\vec{\psi}_{\gamma_0}$.

When $(u_0,v_0)$ is close to $(u_{\gamma_0},v_{\gamma_0})$ in $L^2$-norm,
we would like to construct a decaying solution $\vec{\psi}$ of the spectral problem \eqref{laxeq2},
which is close to the eigenvector $\vec{\psi}_{\gamma_0}$. This is achieved in Lemma \ref{onebound1} below.
To simplify analysis, we introduce a unitary transformation in the linear equation \eqref{laxeq2},
\begin{equation}\label{unitary}
\vec{\psi} = \left[\begin{matrix} f & 0\\0&\overline{f} \end{matrix}\right]\vec{\phi},
\end{equation}
where $f(x)=e^{\frac{i}{4}\int_{0}^x(|u_0|^2-|v_0|^2)dx}$ is well defined for any
$(u_0,v_0) \in L^2(\mathbb{R})$.  Then, the linear equation \eqref{laxeq2}  becomes
\begin{equation}\label{sys}
\partial_x\vec{\phi} = M(u_0,v_0,\lambda) \vec{\phi},
\end{equation}
where
$$
M(u_0,v_0,\lambda) := \frac{i}{4} \left[\begin{matrix} \lambda^2 - \lambda^{-2} &
2 (\overline{u}_0 \lambda^{-1} - \overline{v}_0 \lambda)\overline{f}^2 \\
2 (u_0 \lambda^{-1} - v_0 \lambda)f^2 & \lambda^{-2} - \lambda^2 \end{matrix}\right].
$$
The following lemma gives the main result of the perturbation theory.
Below, $A\lesssim B$ means that there exists a positive $\epsilon$-independent constant $C$
such that $A\leq CB$ for all sufficiently small $\epsilon$.

\begin{lem}\label{onebound1}
For a fixed $\lambda_0 = e^{i\gamma_0/2}$ with $\gamma_0 \in (0,\pi)$,
there exist a real positive $\epsilon$ such that if
\begin{equation}
\label{bound-eigenvalue-before}
\|u_0-u_{\gamma_0}\|_{L^2}+\|v_0-v_{\gamma_0}\|_{L^2}\leq \epsilon,
\end{equation}
then there exists a solution $\vec{\psi} \in H^1(\mathbb{R};\mathbb{C}^2)$
of the linear equation  \eqref{laxeq2} for an eigenvalue $\lambda \in \mathbb{C}$ such that
\begin{equation}
\label{bound-eigenvalue}
|\lambda-\lambda_0|+\|\vec{\psi}-\vec{\psi}_{\gamma_0}\|_{H^1} \lesssim
\|u_0-u_{\gamma_0}\|_{L^2}+\|v_0-v_{\gamma_0}\|_{L^2}.
\end{equation}
\end{lem}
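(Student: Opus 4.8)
The plan is to recast the spectral problem \eqref{laxeq2} as a small perturbation of the one at the soliton background and to locate the persisting eigenvalue through a Wronskian (Evans-function) condition; this is the concrete realization of the Lyapunov–Schmidt reduction alluded to in Section 2. I work throughout with the conjugated system \eqref{sys}, $\partial_x\vec{\phi}=M(u_0,v_0,\lambda)\vec{\phi}$, using that $M$ is traceless, that the factor $f$ in \eqref{unitary} is unimodular with $f'=\tfrac{i}{4}(|u_0|^2-|v_0|^2)f$, and that the off-diagonal entries of $M$ lie in $L^2(\mathbb{R})$ with norm controlled by $\|u_0\|_{L^2}+\|v_0\|_{L^2}$. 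For $\lambda$ in a fixed small disk $D$ around $\lambda_0=e^{i\gamma_0/2}$ one has $\mathrm{Re}\big[\tfrac{i}{4}(\lambda^2-\lambda^{-2})\big]$ close to $-\tfrac12\sin\gamma_0<0$, so the diagonal part of $M$ furnishes a uniform exponential dichotomy on $D$: a one-dimensional subspace of solutions decaying as $x\to+\infty$ and a one-dimensional subspace decaying as $x\to-\infty$.

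First I would construct, by Volterra integral equations, the solutions $\vec{\phi}^{\pm}(\cdot,\lambda;u_0,v_0)$ normalized by $e^{\mp\frac{i}{4}(\lambda^2-\lambda^{-2})x}\vec{\phi}^{\pm}\to e_{1,2}$ as $x\to\pm\infty$. The point in this low-regularity setting is that the relevant integral operators stay bounded even though the potential is only $L^2$: a kernel $e^{-c|x-y|}$ with $c>0$ against an $L^2$ function $g$ obeys $\int e^{-c|x-y|}|g(y)|\,dy\le\|g\|_{L^2}(2c)^{-1/2}$ uniformly in $x$ by Cauchy–Schwarz, so the iteration converges and $\vec{\phi}^{\pm}$ depend analytically on $\lambda\in D$ and Lipschitz-continuously on $(u_0,v_0)\in L^2$. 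The nonlinear dependence on $(u_0,v_0)$ through $f$ is handled by $\|f_0^2-f_{\gamma_0}^2\|_{L^\infty}\lesssim\|u_0-u_{\gamma_0}\|_{L^2}+\|v_0-v_{\gamma_0}\|_{L^2}$, which holds because the difference of the integrands $|u_0|^2-|v_0|^2$ and $|u_{\gamma_0}|^2-|v_{\gamma_0}|^2$ has $L^1$ norm $\lesssim\epsilon$. Since $M$ is traceless, the Wronskian $E(\lambda;u_0,v_0):=\det\big[\vec{\phi}^-(x,\lambda),\vec{\phi}^+(x,\lambda)\big]$ is independent of $x$, analytic in $\lambda\in D$, and $\lambda$ is an eigenvalue of \eqref{laxeq2} with an exponentially decaying eigenvector exactly when $E(\lambda;u_0,v_0)=0$.

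At the soliton background the explicit formulas finish the argument. By \eqref{psi-sol-again} the eigenvector $\vec{\psi}_{\gamma_0}$ decays at both ends, so $\vec{\phi}^-$ and $\vec{\phi}^+$ are parallel at $\lambda_0$ and $E(\lambda_0;u_{\gamma_0},v_{\gamma_0})=0$; moreover this zero is simple, since the companion solution $\vec{\xi}$ of \eqref{eigfcn} grows exponentially, so $\dim\ker(\partial_x-L(u_{\gamma_0},v_{\gamma_0},\lambda_0))=1$ with no Jordan block, i.e.\ $\partial_\lambda E(\lambda_0;u_{\gamma_0},v_{\gamma_0})\neq0$ — a fact I would confirm by evaluating the standard integral representation of $\partial_\lambda E$ against the explicit solutions \eqref{psi-sol-again}–\eqref{eigfcn}. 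Then, under \eqref{bound-eigenvalue-before}, $E(\cdot;u_0,v_0)$ is $C^1$-close to $E(\cdot;u_{\gamma_0},v_{\gamma_0})$ on $D$ with $\|E(\cdot;u_0,v_0)-E(\cdot;u_{\gamma_0},v_{\gamma_0})\|_{C^1(D)}\lesssim\epsilon$, so by the implicit function theorem (or Rouch\'e's theorem) it has a unique zero $\lambda\in D$ with $|\lambda-\lambda_0|\lesssim\epsilon$. Taking $\vec{\phi}=\vec{\phi}^+(\cdot,\lambda;u_0,v_0)$ with a fixed normalization and undoing \eqref{unitary} produces $\vec{\psi}\in H^1(\mathbb{R};\mathbb{C}^2)$ with $\|\vec{\psi}-\vec{\psi}_{\gamma_0}\|_{H^1}\lesssim|\lambda-\lambda_0|+\|u_0-u_{\gamma_0}\|_{L^2}+\|v_0-v_{\gamma_0}\|_{L^2}\lesssim\epsilon$; the $H^1$ control follows by differentiating $\partial_x\vec{\phi}=M\vec{\phi}$ and combining with the Lipschitz dependence of $\vec{\phi}^{\pm}$, using that the data may be taken in $H^2(\mathbb{R})$ (as in the reduction for Theorem \ref{maintheorem}) so that $|u_0|^2-|v_0|^2\in L^2$ and hence $f'\vec{\phi}\in L^2$.

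The step I expect to be the main obstacle is the low-regularity perturbation theory: making the Jost-type construction, the analyticity and Lipschitz bounds for $\vec{\phi}^{\pm}$ and $E$, and the nonlinear $f$-conjugation all close with the potential controlled only in $L^2$, and verifying that the resulting eigenvector really lies in $H^1$ with norm governed solely by the $L^2$-distance in \eqref{bound-eigenvalue-before}. The simplicity of the unperturbed zero of $E$ is routine given \eqref{psi-sol-again}–\eqref{eigfcn}, but it is essential: without it the eigenvalue could fail to persist, or to depend Lipschitz-continuously on the perturbation.
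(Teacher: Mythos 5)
Your strategy --- Jost solutions plus an Evans/Wronskian function whose simple zero is continued by the implicit function theorem --- is a genuinely different route from the paper's, which runs a Lyapunov--Schmidt reduction: decompose $\vec{\phi}=\vec{\phi}_{\gamma_0}+\vec{\phi}_s$ with the projection along $\sigma_3\vec{\eta}_{\gamma_0}$, invert the Fredholm operator $\partial_x-M_{\gamma_0}$ on the range orthogonal to $\vec{\eta}_{\gamma_0}$, and determine $\lambda$ from the scalar bifurcation equation. The two approaches would meet at the same nondegeneracy condition: your $\partial_\lambda E(\lambda_0)\neq 0$ is, via the standard integral representation, exactly the paper's Melnikov-type quantity $s=\langle \vec{\eta}_{\gamma_0},\partial_\lambda M|_{\lambda_0}\vec{\phi}_{\gamma_0}\rangle_{L^2}=4ie^{-i\gamma_0/2}/\sin\gamma_0$. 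On this point, however, your stated justification is not correct: geometric simplicity of $\ker(\partial_x-L(u_{\gamma_0},v_{\gamma_0},\lambda_0))$ together with the exponential growth of $\vec{\xi}$ does \emph{not} imply the zero of $E$ is simple. A double zero of the Evans function corresponds to the existence of a decaying solution of $(\partial_x-M_{\gamma_0})\vec{w}=\partial_\lambda M|_{\lambda_0}\vec{\phi}_{\gamma_0}$, i.e.\ to the vanishing of $s$, which is an extra condition that must be computed, not deduced from ``no Jordan block.'' You do say you would confirm it by the integral formula, and that computation is the one the paper carries out, so this is a deferred step with a misleading heuristic rather than a fatal error.

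The genuine gap is in the construction of $\vec{\phi}^{\pm}$. Your convergence argument --- Cauchy--Schwarz gives $\int e^{-c|x-y|}|g(y)|\,dy\le (2c)^{-1/2}\|g\|_{L^2}$ --- only shows that one application of the Volterra operator has norm of order $(2c)^{-1/2}(\|u_0\|_{L^2}+\|v_0\|_{L^2})$. Here the potential is not small: it is $\epsilon$-close to the full soliton, whose charge is $\|u_{\gamma_0}\|_{L^2}^2+\|v_{\gamma_0}\|_{L^2}^2=O(\gamma_0)$, while $c=\tfrac12\sin\gamma_0$; the resulting operator norm is not less than one, so the Neumann series does not converge by this estimate. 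For $L^1$ potentials the Volterra ordering supplies a $1/n!$ gain that rescues the series for arbitrarily large potentials, but with only $L^2$ control that gain must be extracted by a separate combinatorial argument (as in Zhou's $L^2$ Jost-solution theory for Zakharov--Shabat systems), which you neither invoke nor prove. The natural repair, and the one that brings you back to the paper's proof, is to iterate not around the free diagonal system but around the explicit soliton fundamental matrix $[\vec{\phi}_{\gamma_0},\vec{\xi}_{\gamma_0}]$, so that the perturbation series is in powers of $\|u_0-u_{\gamma_0}\|_{L^2}+\|v_0-v_{\gamma_0}\|_{L^2}=O(\epsilon)$ and converges trivially; this is precisely what the paper's bounded inverse $(\partial_x-M_{\gamma_0})^{-1}$ accomplishes. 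Finally, the closing $H^1$ estimate for $\vec{\psi}-\vec{\psi}_{\gamma_0}$ needs the normalization of $\vec{\phi}^{+}$ to be matched to that of $\vec{\psi}_{\gamma_0}$ (a proportionality constant that must itself be shown to be $\epsilon$-close to the soliton value), and the term $f'\vec{\phi}$ with $f'=\tfrac{i}{4}(|u_0|^2-|v_0|^2)f$ is only in $L^1$ for $L^2$ data, so the claimed $H^1$ membership requires the conjugated variables throughout rather than a one-line differentiation of $\partial_x\vec{\phi}=M\vec{\phi}$.
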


\begin{proof}
We divide the proof into four steps that accomplish the method of Lyapunov--Schmidt reductions.
Step 1 is a set-up for the perturbation theory. Step 2 splits the problem into two parts by appropriate projections.
In Step 3, we solve the first part of the problem by using the implicit function theorem.
In Step 4, we solve the residual equation that determines uniquely $\lambda \in \mathbb{C}$ and
$\vec{\psi} \in H^1(\mathbb{R};\mathbb{C}^2)$ satisfying bound (\ref{bound-eigenvalue}).

\vspace{0.2cm}

\emph{Step 1.}
Set $u_0 = u_{\gamma_0}+u_s$ and $v_0 = v_{\gamma_0}+v_s$, where
$(u_s, v_s) \in L^2(\mathbb{R})$ are remainder terms, which are $\mathcal{O}(\epsilon)$ small in $L^2$ norm,
according to the bound (\ref{bound-eigenvalue-before}).
We expand $1/\lambda^2$ and $1/\lambda$ around $\lambda_0$ by Taylor series.
Using the fact $|u_{\gamma_0}| = |v_{\gamma_0}|$, we also expand $u_0 f^2$ and $v_0 f^2$
in Taylor series, e.g.
\begin{eqnarray}
\label{Taylor-series-expansion}
u_0 f^2 = u_0 e^{\frac{i}{2}\int_0^x(|u_0|^2-|v_0|^2)dx} = (u_{\gamma_0}+u_s) \left( 1+g+\frac{1}{2}g^2+\mathcal{O}(g^3)\right),
\end{eqnarray}
where
$$
g := i \int_0^x \mbox{Re} \left( u_s\overline{u}_{\gamma_0}-v_s\overline{v}_{\gamma_0} \right) dx
+ \frac{i}{2} \int_0^x(|u_s|^2-|v_s|^2) dx.
$$
Note that $g$ is well defined for $(u_s, v_s) \in L^2(\mathbb{R})$. From these expansions,
the linear equation \eqref{sys} becomes
\begin{equation}\label{deltaeq}
(\partial_x - M_{\gamma_0})\vec{\phi}= \Delta M \vec{\phi},
\end{equation}
where
$$
M_{\gamma_0} = M(u_{\gamma_0},v_{\gamma_0},\lambda_0) = \frac{1}{2} \left[\begin{matrix} -\sin\gamma_0 &
i(e^{-i\gamma_0/2} \overline{u}_{\gamma_0}-e^{i\gamma_0/2}\overline{v}_{\gamma_0} ) \\
i(e^{-i\gamma_0/2} u_{\gamma_0}-e^{i\gamma_0/2} v_{\gamma_0} ) & \sin\gamma_0 \end{matrix}\right]
$$
and the perturbation term $\Delta M$ applied to any $\vec{\phi} \in H^1(\mathbb{R})$
satisfies the inequality
\begin{equation}\label{Lbound}
 \| \Delta M \vec{\phi} \|_{L^2} \lesssim (|\lambda-\lambda_0|+\|u_s\|_{L^2}+\|v_s\|_{L^2}) \| \phi \|_{H^1},
\end{equation}
thanks to the embedding of $H^1(\mathbb{R})$ in $L^{\infty}(\mathbb{R}) \cap L^2(\mathbb{R})$.
Note that the bound (\ref{Lbound}) can not be derived in the context of the spectral problem (\ref{laxeq2})
without the unitary transformation \eqref{unitary}, which removes the term $\frac{i}{4} (|u|^2 - |v|^2) \sigma_3$
from the operator $L$ in (\ref{lablax1}). This explains a posteriori why we are using
the technical transformation (\ref{unitary}).

We will later need the explicit computation of the leading order part in the perturbation
term $\Delta M$ with respect to $(\lambda - \lambda_0)$, that is,
\begin{equation}
\label{explicit-M}
\Delta M = \frac{i}{2} (\lambda-\lambda_0) \left[\begin{matrix}(\lambda_0 + \lambda_0^{-3}) &
-(\overline{u}_{\gamma_0} \lambda_0^{-2} + \overline{v}_{\gamma_0} ) \\ -(u_{\gamma_0} \lambda_0^{-2} +v_{\gamma_0} ) &
-(\lambda_0+\lambda_0^{-3}) \end{matrix}\right] + \mathcal{O}((\lambda-\lambda_0)^2,\|u_s \|_{L^2},\|v_s\|_{L^2}).
\end{equation}

\vspace{0.2cm}

\emph{Step 2.}  We aim to construct an appropriate projection operator by which
we split the linear equation \eqref{deltaeq} into two parts.
Recall that $\mbox{\rm dim} \ker(\partial_x - M_{\gamma_0}) = 1$ and
let $\vec{\phi}_{\gamma_0} \in \ker(\partial_x - M_{\gamma_0})$ and
$\vec{\eta}_{\gamma_0} \in \ker(\partial_x + M_{\gamma_0}^*)$. These null
vectors can be obtained explicitly:
$$
\vec{\phi}_{\gamma_0}(x) = \left[ \begin{matrix}e^{\frac{x}{2}\sin\gamma_0} \\ e^{-\frac{x}{2}\sin\gamma_0} \end{matrix}\right]
\left| \sech\left(\sin\gamma_0 x-i\frac{\gamma_0}{2}\right) \right|, \quad
\vec{\eta}_{\gamma_0}(x) = \left[ \begin{matrix}e^{-\frac{x}{2}\sin\gamma_0} \\-e^{\frac{x}{2}\sin\gamma_0} \end{matrix}\right]
\left| \sech\left(\sin\gamma_0 x-i\frac{\gamma_0}{2}\right) \right|.
$$
We note that
$\langle \vec{\eta}_{\gamma_0},\vec{\phi}_{\gamma_0}\rangle_{L^2}=0$
but $\langle \sigma_3 \vec{\eta}_{\gamma_0},\vec{\phi}_{\gamma_0}\rangle_{L^2} \neq 0$,
where $\sigma_3=\left[\begin{matrix}1&0\\0&-1\end{matrix}\right]$.
Also note that $\vec{\phi}_{\gamma_0} = \vec{\psi}_{\gamma_0}$ given by (\ref{psi-sol-again})
because $|u_{\gamma_0}| = |v_{\gamma_0}|$.

Let us make the following decomposition:
\begin{equation}\label{decomp}
\vec{\phi}=\vec{\phi}_{\gamma_0}+\vec{\phi}_s,
\end{equation}
where $\vec{\phi}_s$ is defined uniquely from the normalization condition
$\langle \sigma_3 \vec{\eta}_{\gamma_0},\vec{\phi} \rangle_{L^2} =
\langle \sigma_3 \vec{\eta}_{\gamma_0},\vec{\phi}_{\gamma_0}\rangle_{L^2}$,
which yields the orthogonality condition $\langle \sigma_3 \vec{\eta}_{\gamma_0},\vec{\phi}_{s}\rangle_{L^2} = 0$.
Then we introduce the projection operator $P_{\gamma_0} : L^2(\mathbb{R};\mathbb{C}^2) \rightarrow
L^2(\mathbb{R};\mathbb{C}^2)\cap \Span\{\sigma_3 \vec{\eta}_{\gamma_0}\}^{\perp}$ defined by
$$
P_{\gamma_0} \vec{\phi}=\vec{\phi}-\frac{\langle \sigma_3 \vec{\eta}_{\gamma_0},\vec{\phi}\rangle_{L^2}}{\langle \sigma_3\vec{\eta}_{\gamma_0},\vec{\phi}_{\gamma_0}\rangle_{L^2}}\vec{\phi}_{\gamma_0}.
$$
Note that $P_{\gamma_0} \vec{\phi}_s = \vec{\phi}_s$ and $P_{\gamma_0} \vec{\phi}_{\gamma_0} = \vec{0}$.

From equations \eqref{deltaeq} and \eqref{decomp}, we define the operator equation
\begin{equation}\label{eqF}
F(\vec{\phi}_s,u_s,v_s,\lambda):=(\partial_x -
M_{\gamma_0})\vec{\phi}_{s}-\Delta M(\vec{\phi}_{\gamma_0} + \vec{\phi}_s)=0.
\end{equation}
Clearly, since $\dim \ker(\partial_x - M_{\gamma_0}) = 1 \neq 0$,
the Fr\'{e}chet derivative $D_{\vec{\phi}_s} F(0,0,0,\lambda_0)=\partial_x-M_{\gamma_0}$
has no bounded inverse. Let $\hat{P}_{\gamma_0}=\sigma_3 P_{\gamma_0} \sigma_3$ and notice that
$$
\hat{P}_{\gamma_0}: L^2(\mathbb{R};\mathbb{C}^2)\rightarrow L^2(\mathbb{R};\mathbb{C}^2)\cap \Span\{\vec{\eta}_{\gamma_0}\}^{\perp}.
$$
We decompose equation \eqref{eqF} by the projection $\hat{P}$ into two equations
\begin{eqnarray} \label{eqG}
G(\vec{\phi}_s,u_s,v_s,\lambda)&:=& \hat{P}_{\gamma_0} F(\vec{\phi}_s,u_s,v_s,\lambda) = 0, \\
\label{bifeq}
H(\vec{\phi}_s,u_s,v_s,\lambda) &:=& (I - \hat{P}_{\gamma_0}) F(\vec{\phi}_s,u_s,v_s,\lambda) = 0.
\end{eqnarray}

\vspace{0.2cm}

\emph{Step 3.}  First, we note that since
$\dim \ker(\partial_x-M_{\gamma_0})=\dim\ker(\partial_x+M_{\gamma_0}^*) = 1 <\infty$,
then $\partial_x-M_{\gamma_0}$ is a Fredholm operator of index zero.
Observe that $\Range(G)= L^2(\mathbb{R};\mathbb{C}^2)\cap \mbox{span}\{\vec{\eta}_{\gamma_0}\}^{\perp}$,
where $\vec{\eta}_{\gamma_0} \in \ker\{\partial_x+M_{\gamma_0}^*\}$.
By the Fredholm alternative theorem, $D_{\vec{\phi}_s} G(0,0,0,\lambda_0) =
\hat{P}_{\gamma_0} D_{\vec{\phi}_s} F(0,0,0,\lambda_0)$ has a bounded inverse,
\begin{equation}
\label{bounded-inverse}
(\partial_x-M_{\gamma_0})^{-1}:L^2(\mathbb{R};\mathbb{C}^2) \cap \Span\{\vec{\eta}_{\gamma_0}\}^{\perp}\rightarrow
H^1(\mathbb{R};\mathbb{C}^2) \cap \mbox{span}\{\sigma_3 \vec{\eta}_{\gamma_0}\}^{\perp}.
\end{equation}

Next we claim that for some $(u_s,v_s) \in L^2(\mathbb{R})$ and $\lambda\in \mathbb{C}$,
there exists a unique $\vec{\phi}_* \in H^1(\mathbb{R};\mathbb{C}^2)$ such that $G(\vec{\phi}_*,u_s,v_s,\lambda)=0$.
This can be done by the implicit function theorem. The function
$$
G : H^1(\mathbb{R};\mathbb{C}^2) \times L^2(\mathbb{R};\mathbb{C})\times L^2(\mathbb{R};\mathbb{C})\times
\mathbb{C} \rightarrow L^2(\mathbb{R};\mathbb{C}^2)\cap \mbox{span}\{\vec{\eta}_{\gamma_0}\}^{\perp}
$$
is $C^1$ in $u_s$, $v_s$ (and their complex conjugates), $\lambda$, and $\vec{\phi}_s$.
We also find that $G(0,0,0,\lambda_0)=0$ and the derivative $D_{\vec{\phi}_s}G(0,0,0,\lambda_s)$ is invertible
with the bounded inverse (\ref{bounded-inverse}). For some $\epsilon,\rho >0$, let
\begin{eqnarray*}
U_{\epsilon}&:=&\{(u_s,v_s,\lambda)\in L^2(\mathbb{R}) \times L^2(\mathbb{R}) \times \mathbb{C} : \quad
\|u_s\|_{L^2}+\|v_s\|_{L^2}+ |\lambda-\lambda_0|<\epsilon\},\\
V_{\rho}&:=&\{\vec{\phi}_s\in H^1(\mathbb{R};\mathbb{C}^2)\cap\text{span}\{\sigma_3\vec{\eta}_{\gamma_0}\}^{\perp}:
\quad \|\vec{\phi}_s\|_{H^1} \leq \rho\}.
\end{eqnarray*}
Then, by the implicit function theorem, for sufficiently small $\epsilon,\rho >0$,
and for each $(u_s,v_s,\lambda)\in U_{\epsilon}$,
there exists a unique $\vec{\phi}_* \in V_{\rho}$ such that $G(\vec{\phi}_*, u_s,v_s,\lambda)=0$.

A unique element $\vec{\phi}_*$ depends implicitly on  $(u_s,v_s,\lambda)$, that is,
we can write $\vec{\phi}_*:=\vec{\phi}_*(u_s,v_s,\lambda)$. From equations (\ref{eqF}) and
\eqref{eqG}, we have
\begin{equation}\label{eqG2}
(I - P_{\gamma_0} (\partial_x - M_{\gamma_0})^{-1} \hat{P}_{\gamma_0} \Delta M) \vec{\phi}_* =
P_{\gamma_0} (\partial_x - M_{\gamma_0})^{-1} \hat{P}_{\gamma_0} \Delta M \vec{\phi}_{\gamma_0}
\end{equation}
and from boundedness of the inverse operator given by (\ref{bounded-inverse})
and inequality \eqref{Lbound}, we obtain
\begin{equation}\label{preeq}
\|\vec{\phi}_*\|_{H^1} \lesssim \| P_{\gamma_0} (\partial_x - M_{\gamma_0})^{-1} \hat{P}_{\gamma_0} \Delta M \vec{\phi}_{\gamma_0}\|_{H^1}
\lesssim \|\Delta M \vec{\phi}_{\gamma_0} \|_{L^2} \lesssim |\lambda-\lambda_0| + \|u_s\|_{L^2}+\|v_s\|_{L^2},
\end{equation}
if $(u_s,v_s,\lambda)\in U_{\epsilon}$.

\vspace{0.2cm}

\emph{Step 4.}  Lastly we address the bifurcation equation \eqref{bifeq}
to determine $\lambda \in \mathbb{C}$. From equations (\ref{eqF}) and \eqref{bifeq},
the bifurcation equation can be written explicitly as
\begin{equation}\label{bifeq2}
I(u_s,v_s,\lambda) := \langle \vec{\eta}_{\gamma_0}, \Delta M (\vec{\phi}_{\gamma_0} + \vec{\phi}_*(u_s,v_s,\lambda)) \rangle_{L^2} = 0,
\end{equation}
where $\vec{\phi}_*(u_s,v_s,\lambda)$ is uniquely expressed from (\ref{eqG2}) if $(u_s,v_s,\lambda)\in U_{\epsilon}$.

By using the explicit expression (\ref{explicit-M}), we check that
$s := \partial_{\lambda} I(0,0,\lambda_0) \neq 0$, where
\begin{eqnarray*}
s & = & \frac{i}{2} \langle \vec{\eta}_{\gamma_0}, \left[\begin{matrix}(\lambda_0 + \lambda_0^{-3}) &
-(\overline{u}_{\gamma_0} \lambda_0^{-2} + \overline{v}_{\gamma_0} ) \\ -(u_{\gamma_0} \lambda_0^{-2} +v_{\gamma_0} ) &
-(\lambda_0+\lambda_0^{-3}) \end{matrix}\right] \vec{\phi}_{\gamma_0} \rangle_{L^2} \\
&=& i e^{-i \gamma_0/2}  \int_{\mathbb{R}} \left( 2 \cos \gamma_0 \left|\sech\left(x\sin\gamma_0-i\frac{\gamma_0}{2}\right)\right|^2
+ \sin^2\gamma_0 \left|\sech\left(x\sin\gamma_0-i\frac{\gamma_0}{2}\right)\right|^4 \right) dx \\
&=& 4 i e^{-i \gamma_0/2}  \int_{\mathbb{R}} \frac{1 + \cos \gamma_0 \cosh(2 x \sin \gamma_0)}{(\cosh(2 x \sin \gamma_0) + \cos \gamma_0)^2} dx \\
& = & \frac{4 i e^{-i\gamma_0/2}}{\sin \gamma_0}.
\end{eqnarray*}
As a result, equation \eqref{bifeq2} can be used to uniquely determine the spectral parameter $\lambda$
if $(u_s,v_s,\lambda)\in U_{\epsilon}$.
From inequalities \eqref{Lbound} and \eqref{preeq}, we obtain that this $\lambda$
satisfies the bound
\begin{equation} \label{preeq2}
|\lambda-\lambda_0| \lesssim  \|u_s\|_{L^2}+\|v_s\|_{L^2}.
\end{equation}
With inequalities \eqref{preeq} and \eqref{preeq2}, the proof of Lemma \ref{onebound1} is complete.
\end{proof}

\begin{remark}
A spectral parameter $\lambda$ in Lemma \ref{onebound1} may not be on the unit circle $|\lambda|=1$
even if $\lambda_0 = e^{i \gamma_0/2}$ is on the unit circle. In what follows,
we develop the theory when $\lambda$ occurs on the unit circle, hence
we write $\lambda=e^{i\gamma/2}$ for some $\gamma \in (0,\pi)$.
All results obtained below can be generalized to the case of $|\lambda| \neq 1$
by using the Lorentz transformation in Proposition \ref{prop-Lorentz}.
\label{remark-normalization}
\end{remark}

In Lemma \ref{bound2} below, we will show that a solution $\vec{\phi}$
determined in the proof of Lemma \ref{onebound1} can be written explicitly as the perturbed
solution around $\vec{\phi}_{\gamma}$ in suitable function spaces. Then, in
Lemma \ref{pqbound} below, we will use this representation and the
auto-B\"{a}cklund transformation \eqref{up} and \eqref{vq} to show that
$(p_0,q_0)$ are small in $L^2$ norm.

To develop this analysis, we first prove several technical results.
Let $(u,v) = (u_{\gamma},v_{\gamma})$, $\lambda = e^{i \gamma/2}$
and consider the linear inhomogeneous equation
\begin{equation}\label{inhomeq}
\partial_x \vec{w}- M_{\gamma}\vec{w}=\vec{f},
\end{equation}
where
$$
M_{\gamma} = \frac{1}{2} \left[\begin{matrix} -\sin\gamma & i(e^{-i\gamma/2}\overline{u}_{\gamma}-e^{i\gamma/2}\overline{v}_{\gamma} ) \\ i(e^{-i\gamma/2}u_{\gamma}-e^{i\gamma/2} v_{\gamma} ) & \sin\gamma\end{matrix}\right].
$$
We introduce Banach spaces $X=X_1\times X_2$ and $Y=Y_1\times Y_2$
such that for $\vec{w}=(w_1,w_2)^t\in X$ and $\vec{f}=(f_1,f_2)^t\in Y$, we have
$$
\|\vec{w}\|_X := \|w_1\|_{X_1}+\|w_2\|_{X_2}, \quad \|\vec{f}\|_Y := \|f_1\|_{Y_1}+\|f_2\|_{Y_2},
$$
where
\begin{eqnarray*}
\|w_1\|_{X_1} & := & \inf_{w_1=v_1+u_1}\left( \|v_1e^{-\frac{x}{2}\sin\gamma}|\cosh(x\sin\gamma -i\gamma/2)|\|_{L^{\infty}_x} \right. \\
& \phantom{t} & \left. \phantom{texttexttexttext} +\|u_1e^{\frac{x}{2}\sin\gamma}|\cosh(x\sin\gamma -i\gamma/2)|\|_{L^2_x \cap L^{\infty}_x}  \right), \\
\|w_2\|_{X_2} & := & \inf_{w_2=v_2+u_2}\left(\|v_2e^{\frac{x}{2}\sin\gamma}|\cosh(x\sin\gamma -i\gamma/2)|\|_{L^{\infty}_x} \right. \\
& \phantom{t} & \left. \phantom{texttexttexttext} + \|u_2e^{-\frac{x}{2}\sin\gamma}|\cosh(x\sin\gamma -i\gamma/2)|\|_{L^2_x \cap L^{\infty}_x}  \right)
\end{eqnarray*}
and
\begin{eqnarray*}
\|f_1\|_{Y_1} & := & \inf_{f_1=g_1+h_1}\left( \|g_1e^{\frac{x}{2}\sin\gamma}|\cosh(x\sin\gamma -i\gamma/2)|\|_{L^2_x} \right. \\
& \phantom{t} & \left. \phantom{texttexttexttext} +\|h_1e^{-\frac{x}{2}\sin\gamma}|\cosh(x\sin\gamma -i\gamma/2)|\|_{L^{2}_x \cap L^1_x} \right), \\
\|f_2\|_{Y_2} & := & \inf_{f_2=g_2+h_2} \left( \|g_2e^{-\frac{x}{2}\sin\gamma}|\cosh(x\sin\gamma -i\gamma/2)|\|_{L^2_x} \right. \\
& \phantom{t} & \left. \phantom{texttexttexttext} +\|h_2e^{\frac{x}{2}\sin\gamma}|\cosh(x\sin\gamma -i\gamma/2)|\|_{L^2_x \cap L^1_x} \right).
\end{eqnarray*}
It is obvious that $X$ and $Y$ are continuously embedded into $L^2(\mathbb{R})$.
We shall estimate the bound of the operator $P_{\gamma} (\partial_x-M_{\gamma})^{-1} \hat{P}_{\gamma} : Y \to X$,
where projection operators $P_{\gamma}$ and $\hat{P}_{\gamma}$ are defined in the proof
of Lemma \ref{onebound1}.
First, we will obtain an explicit solution $\vec{w}\in  H^1(\mathbb{R};\mathbb{C}^2)\cap \Span\{\sigma_3\vec{\eta}_{\gamma}\}^{\perp}$
for the linear inhomogeneous equation \eqref{inhomeq}
when $\vec{f} \in L^2(\mathbb{R};\mathbb{C}^2)\cap \ker(\partial_x + M_{\gamma}^*)^{\perp}$.
Then, we will prove that the mapping $Y \ni \vec{f} \mapsto \vec{w} \in X$ is bounded.
These goals are achieved in the next two lemmas.

\begin{lem} \label{ode}
For any $\vec{f}=(f_1,f_2)^t\in L^2(\mathbb{R};\mathbb{C}^2)\cap \Span\{ \vec{\eta}_{\gamma}\}^{\perp}$,
there exists a unique solution $\vec{w}\in  H^1(\mathbb{R};\mathbb{C}^2)\cap \Span\{\sigma_3\vec{\eta}_{\gamma}\}^{\perp}$
of the inhomogeneous equation (\ref{inhomeq}) that can be written as
\begin{equation}\label{wsol}
\vec{w}(x) = \frac{1}{4} \vec{\phi}_{\gamma}(x) \left[ k(\vec{f}) + W_-(x) + W_+(x) \right]  +
\frac{1}{4}\vec{\xi}_{\gamma}(x) \int_{-\infty}^{x} \vec{\eta}_{\gamma}(y) \cdot \vec{f}(y)  dy,
\end{equation}
where $k(\vec{f})$ is a continuous linear functional on $L^2(\mathbb{R};\mathbb{C}^2)$ and we have denoted
\begin{eqnarray*}
W_-(x) & := & \int_{-\infty}^{x}e^{-\frac{1}{2}y\sin\gamma}(e^{2y\sin\gamma}+2\cos\gamma+y\sin(2\gamma))
\left|\sech\left(y \sin\gamma -i \frac{\gamma}{2} \right)\right| f_1(y)dy, \\
W_+(x) & := & \int_{x}^{\infty}e^{-\frac{3}{2}y\sin\gamma}(-1+e^{2y\sin\gamma}y\sin(2\gamma))
\left|\sech\left(y \sin\gamma -i \frac{\gamma}{2} \right)\right| f_2(y)dy.
\end{eqnarray*}
\end{lem}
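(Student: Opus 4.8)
The plan is to construct $\vec{w}$ by variation of parameters built from the two linearly independent solutions of the homogeneous problem $\partial_x\vec{\phi}=M_{\gamma}\vec{\phi}$: the exponentially decaying solution $\vec{\phi}_{\gamma}$ and the exponentially growing solution $\vec{\xi}_{\gamma}$ (the $\gamma$-analogue of \eqref{eigfcn}). Two elementary facts set this up. First, since $\mathrm{tr}\,M_{\gamma}=0$, the Wronskian $\det[\vec{\phi}_{\gamma}\,|\,\vec{\xi}_{\gamma}]$ is constant, and evaluating it with the identity $|\sech(x\sin\gamma-i\gamma/2)|^2=2/(\cos\gamma+\cosh(2x\sin\gamma))$ gives the value $-4$; this is where the factors $\tfrac14$ in \eqref{wsol} come from. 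Second, integrating by parts against the decaying left null vector $\vec{\eta}_{\gamma}\in\ker(\partial_x+M_{\gamma}^*)$ shows that the hypothesis $\vec{f}\in\Span\{\vec{\eta}_{\gamma}\}^{\perp}$ is exactly the solvability condition $\int_{\mathbb{R}}\vec{\eta}_{\gamma}\cdot\vec{f}\,dx=0$. Writing $\vec{w}=c_1\vec{\phi}_{\gamma}+c_2\vec{\xi}_{\gamma}$, Cramer's rule with the Wronskian value reduces \eqref{inhomeq} to $c_2'(x)=\tfrac14\,\vec{\eta}_{\gamma}(x)\cdot\vec{f}(x)$ together with a first-order equation for $c_1$ whose right-hand side naturally splits into a term carrying $\xi_{\gamma,2}$ (which blows up as $x\to+\infty$) and a term carrying $\xi_{\gamma,1}$ (which blows up as $x\to-\infty$).

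Next I would fix the antiderivatives so that the growing solution $\vec{\xi}_{\gamma}$ enters with a coefficient staying in $L^2$. For $c_2$ this forces $c_2(x)=\tfrac14\int_{-\infty}^{x}\vec{\eta}_{\gamma}\cdot\vec{f}\,dy$; by the solvability condition this equals $-\tfrac14\int_{x}^{\infty}\vec{\eta}_{\gamma}\cdot\vec{f}\,dy$, so $c_2$ decays at both ends. For $c_1$ one integrates the $\xi_{\gamma,2}$-contribution from $-\infty$ and the $\xi_{\gamma,1}$-contribution from $+\infty$; these integrals are precisely $W_-(x)$ and $W_+(x)$, leaving one free additive constant, which I call $\tfrac14 k(\vec{f})$, multiplying $\vec{\phi}_{\gamma}\in L^2$. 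This reproduces \eqref{wsol}, and a short computation — differentiating \eqref{wsol} and using $(\partial_x-M_{\gamma})\vec{\phi}_{\gamma}=(\partial_x-M_{\gamma})\vec{\xi}_{\gamma}=0$ together with the Wronskian identity $\vec{f}=-\tfrac14(\det[\vec{f}\,|\,\vec{\xi}_{\gamma}]\vec{\phi}_{\gamma}+\det[\vec{\phi}_{\gamma}\,|\,\vec{f}]\vec{\xi}_{\gamma})$ — verifies that it solves \eqref{inhomeq} for any value of $k(\vec{f})$.

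The analytic heart, and what I expect to be the main obstacle, is to prove $\vec{w}\in L^2(\mathbb{R};\mathbb{C}^2)$; membership in $H^1$ then comes for free, since $\partial_x\vec{w}=M_{\gamma}\vec{w}+\vec{f}\in L^2$ and $M_{\gamma}$ is a bounded multiplication operator. The difficulty is that $\vec{\xi}_{\gamma}$ grows like $e^{|x|\sin\gamma/2}$ in one component, so crude pointwise bounds on $c_2$, $W_{\pm}$ give only $L^{\infty}$ control. The key observation is that each dangerous product is, up to harmless $L^2$ remainders, a convolution of $\vec{f}$ against an $L^1$ kernel of the type $e^{-t\sin\gamma/2}\mathbf{1}_{t>0}$: for instance near $x\to+\infty$ one has $c_2(x)\,\xi_{\gamma,2}(x)\sim-\int_{x}^{\infty}e^{-(y-x)\sin\gamma/2}f_2(y)\,dy$, which is bounded in $L^2$ by Young's inequality — and it is exactly here that the vanishing of $\int\vec{\eta}_{\gamma}\cdot\vec{f}$ is indispensable, to remove the otherwise non-decaying constant from $c_2$. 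Running this bookkeeping at both ends for the $\vec{\xi}_{\gamma}$- and $\vec{\phi}_{\gamma}$-pieces (using Cauchy--Schwarz where the relevant kernel is already in $L^2$) yields $\|\vec{w}\|_{L^2}\lesssim\|\vec{f}\|_{L^2}$.

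Finally I would determine $k(\vec{f})$ by imposing $\langle\sigma_3\vec{\eta}_{\gamma},\vec{w}\rangle_{L^2}=0$: only the $\vec{\phi}_{\gamma}$-term contributes the factor $\tfrac14 k(\vec{f})\langle\sigma_3\vec{\eta}_{\gamma},\vec{\phi}_{\gamma}\rangle_{L^2}$, which is nonzero, so $k(\vec{f})$ is uniquely solved for and, since every other term is linear and $L^2$-bounded in $\vec{f}$, $k$ is a continuous linear functional on $L^2(\mathbb{R};\mathbb{C}^2)$. Uniqueness of $\vec{w}$ in $H^1(\mathbb{R};\mathbb{C}^2)\cap\Span\{\sigma_3\vec{\eta}_{\gamma}\}^{\perp}$ follows because the difference of two such solutions lies in $\ker(\partial_x-M_{\gamma})\cap H^1=\Span\{\vec{\phi}_{\gamma}\}$ and is orthogonal to $\sigma_3\vec{\eta}_{\gamma}$, hence vanishes.
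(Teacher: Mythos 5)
Your proposal is correct and follows essentially the same route as the paper: variation of parameters with the explicit fundamental system $\{\vec{\phi}_{\gamma},\vec{\xi}_{\gamma}\}$, integration limits chosen so that the orthogonality condition $\int_{\mathbb{R}}\vec{\eta}_{\gamma}\cdot\vec{f}\,dy=0$ makes the coefficient of the growing solution decay at both ends, convolution (Young/Schwarz) estimates against $e^{-|t|\sin\gamma/2}$ kernels for the $L^2$ bound, and the constraint $\langle\sigma_3\vec{\eta}_{\gamma},\vec{w}\rangle_{L^2}=0$ to fix $k(\vec{f})$ and get uniqueness. You in fact supply more of the algebra than the paper does (the Wronskian value $-4$ explaining the factors $\tfrac14$), and your computations check out.
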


\begin{proof}
Since $\partial_x - M_{\gamma}:H^1(\mathbb{R};\mathbb{C}^2)\rightarrow L^2(\mathbb{R};\mathbb{C}^2)$
is a Fredholm operator of index zero and $\ker(\partial_x + M_{\gamma}^*) = \Span\{ \vec{\eta}_{\gamma}\}$,
the inhomogeneous equation (\ref{inhomeq}) has a solution in $H^1(\mathbb{R};\mathbb{C}^2)$ if
and only if $\vec{f}\in L^2(\mathbb{R};\mathbb{C}^2) \cap \Span\{ \vec{\eta}_{\gamma}\}^{\perp}$.
For uniqueness, we add the constraint $\vec{w} \in \Span\{ \sigma_3 \vec{\eta}_{\gamma}\}^{\perp}$.

Recall that $U=[\vec{\phi}_{\gamma}, \vec{\xi}_{\gamma}]$ is a
fundamental matrix of the homogeneous equation $(\partial_x - M_{\gamma}) U = 0$.
All functions are known explicitly as
$$
\vec{\phi}_{\gamma}(x) = \left[ \begin{matrix} e^{\frac{x}{2}\sin\gamma} \\ e^{-\frac{x}{2}\sin\gamma} \end{matrix}\right] Q(x), \quad
\vec{\eta}_{\gamma}(x) = \left[ \begin{matrix} e^{-\frac{x}{2}\sin\gamma} \\ -e^{\frac{x}{2}\sin\gamma} \end{matrix}\right] Q(x),
$$
and
$$
\vec{\xi}_{\gamma}(x) = \left[ \begin{matrix} e^{\frac{x \sin\gamma}{2}}(e^{-2x \sin\gamma}-x\sin(2\gamma))
\\ -e^{-\frac{x\sin\gamma}{2}}(e^{2x\sin\gamma}+2\cos\gamma+x\sin(2\gamma)) \end{matrix}\right] Q(x),
$$
where
\begin{eqnarray*}
Q(x) := \left|\sech\left(x\sin\gamma -i \frac{\gamma}{2} \right)\right|.
\end{eqnarray*}
From variation of parameters, we have the explicit representation (\ref{wsol}),
where $k(\vec{f})$ is the constant of integration and the other constant is set to zero to ensure that
$\vec{w}\in H^1(\mathbb{R};\mathbb{C}^2)$. It remains to prove that every term in
the explicit expression \eqref{wsol} belongs to $L^2(\mathbb{R};\mathbb{C}^2)$.

Since $|\vec{\phi}_{\gamma}(x)| \lesssim e^{-\frac{|x|}{2}\sin\gamma}$ and $|Q(x)| \lesssim e^{-|x|\sin\gamma}$ for all $x\in \mathbb{R}$,
we have
\begin{eqnarray*}
\| W_- \vec{\phi}_{\gamma} \|_{L^2} &\lesssim& \left\|e^{-\frac{|x|}{2}\sin\gamma}\int_{-\infty}^{x} e^{-\frac{1}{2}y\sin\gamma}(e^{2y\sin\gamma}+2\cos\gamma+y\sin(2\gamma))Q(y)f_1(y)dy\right\|_{L^2_x}\\
&\lesssim& \left\|\int_{-\infty}^{x}e^{\frac{1}{2}(y-x)\sin\gamma} |f_1(y)| dy\right\|_{L^2_x} +
\left\|e^{-\frac{|x|}{2}\sin\gamma}\int_{-\infty}^{x}e^{-\frac{1}{2}|y|\sin\gamma} (2+|y|) |f_1(y)|dy\right\|_{L^2_x}\\
&\lesssim& \|\vec{f}\|_{L^2}.
\end{eqnarray*}
and
\begin{eqnarray*}
\| W_+ \vec{\phi}_{\gamma} \|_{L^2} &\lesssim& \left\|e^{-\frac{|x|}{2}\sin\gamma} \int_{x}^{\infty}e^{-\frac{3}{2}y\sin\gamma}(-1+e^{2y\sin\gamma}y\sin(2\gamma))Q(y)f_2(y)dy \right\|_{L^2_x}\\
&\lesssim& \left\|\int_{x}^{\infty}e^{\frac{1}{2}(x-y)\sin\gamma} |f_2(y)| dy\right\|_{L^2_x}+
\left\|e^{-\frac{|x|}{2}\sin\gamma}\int_x^{\infty}e^{-\frac{1}{2}|y|\sin\gamma} |y| |f_2(y)|dy\right\|_{L^2_x}\\
&\lesssim& \|\vec{f}\|_{L^2},
\end{eqnarray*}
where notation $\| f(x) \|_{L^2_x}$ is used in place of $\| f(\cdot) \|_{L^2}$.
Since $\vec{f}\in L^2(\mathbb{R};\mathbb{C}^2) \cap \Span\{ \vec{\eta}_{\gamma}\}^{\perp}$, then
$$
\int_{x}^{\infty} \vec{\eta}_{\gamma}(y) \cdot \vec{f}(y) dy =
-\int_{-\infty}^{x} \vec{\eta}_{\gamma}(y) \cdot \vec{f}(y) dy.
$$
Using this equality, we can estimate the last term in the explicit expression \eqref{wsol} as follows
\begin{eqnarray*}
\left\|\vec{\xi}_{\gamma}(x) \int_{- \infty}^{x}\vec{\eta}_{\gamma}(y) \cdot \vec{f}(y) dy\right\|_{L^2_x} &\lesssim &
\left\| e^{-\frac{x}{2}\sin\gamma}\int_{- \infty}^{x}\vec{\eta}_{\gamma}(y) \cdot \vec{f}(y) dy\right\|_{L^2_x} +
\left\| e^{\frac{x}{2}\sin\gamma}\int_x^{\infty}\vec{\eta}_{\gamma}(y) \cdot \vec{f}(y) dy \right\|_{L^2_x} \\
&\lesssim& \left\| \int_{- \infty}^{x} e^{\frac{1}{2}(y-x)\sin\gamma}|\vec{f}(y)|dy\right\|_{L^2_x} +
\left\| \int_x^{\infty}e^{-\frac{1}{2}(y-x)\sin\gamma}|\vec{f}(y)|dy \right\|_{L^2_x}\\
&\lesssim&  \|\vec{f}\|_{L^2},
\end{eqnarray*}
where $|\vec{f}|$ is the vector norm of the $2$-vector $\vec{f}$.
Since $\langle \sigma_3 \vec{\eta}_{\gamma}, \vec{\phi}_{\gamma} \rangle_{L^2} \neq 0$,
$k(\vec{f})$ is uniquely determined from the orthogonality condition
$\langle \sigma_3 \vec{\eta}_{\gamma}, \vec{w} \rangle_{L^2} = 0$. Since
all other terms in (\ref{wsol}) are in $L^2(\mathbb{R};\mathbb{C}^2)$,
$k(\vec{f})$ is bounded for all $\vec{f} \in L^2(\mathbb{R};\mathbb{C}^2)$.
Therefore, $k(\vec{f})$ is a continuous linear functional on $L^2(\mathbb{R};\mathbb{C}^2)$.
\end{proof}

\begin{lem}\label{wfbound}
Let $\vec{f} \in Y \cap \Span\{ \vec{\eta}_{\gamma}\}^{\perp}$ and
let $\vec{w}$ be a solution of the inhomogeneous equation (\ref{inhomeq}) in
Lemma \ref{ode}. Then there is a $\vec{f}$-independent constant $C>0$
such that $\|\vec{w}\|_X\leq C\|\vec{f}\|_Y$.
\end{lem}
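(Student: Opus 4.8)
The plan is to estimate the explicit representation \eqref{wsol} of $\vec w$ term by term in the $X$-norm, exploiting the weighted structure of the spaces $X$ and $Y$ together with the orthogonality $\langle\vec\eta_\gamma,\vec f\rangle_{L^2}=0$. Write $\vec w=\frac14\vec\phi_\gamma\,k(\vec f)+\frac14\vec\phi_\gamma W_-+\frac14\vec\phi_\gamma W_++\frac14\vec\xi_\gamma\int_{-\infty}^{x}\vec\eta_\gamma(y)\cdot\vec f(y)\,dy$ and bound the four pieces separately. For the first piece, recall from Lemma \ref{ode} that $k$ is a continuous linear functional on $L^2(\mathbb R;\mathbb C^2)$, and since $Y$ is continuously embedded in $L^2(\mathbb R)$ we get $|k(\vec f)|\lesssim\|\vec f\|_Y$. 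On the other hand $\vec\phi_\gamma\in X$ with $\|\vec\phi_\gamma\|_X=\mathcal O(1)$: taking $v_j=[\vec\phi_\gamma]_j$ and $u_j=0$ in the infima defining $\|\cdot\|_{X_j}$ reduces the weighted $L^\infty$ norms to $\big\||\sech(x\sin\gamma-i\gamma/2)|\,|\cosh(x\sin\gamma-i\gamma/2)|\big\|_{L^\infty_x}=1$. Hence $\|\vec\phi_\gamma\,k(\vec f)\|_X\lesssim\|\vec f\|_Y$.

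For the terms $\vec\phi_\gamma W_\mp$, I would first replace $f_1=g_1+h_1$ and $f_2=g_2+h_2$ by decompositions realizing (up to a factor $2$) the infima in $\|f_1\|_{Y_1}$ and $\|f_2\|_{Y_2}$. The kernels appearing in $W_-(x)$ and $W_+(x)$ are, up to a sign, exactly the second and the first components of $\vec\xi_\gamma(y)$, so every integrand is a product of an explicit exponential-times-algebraic weight, of $Q(y)=|\sech(y\sin\gamma-i\gamma/2)|$, and of $f_j(y)$. Inserting the relevant $Y$-weight into the integrand and extracting it again, then multiplying by the known profile of $\vec\phi_\gamma(x)$ (whose components are $e^{\pm\frac x2\sin\gamma}Q(x)$), turns each contribution componentwise into a one-sided convolution of the schematic form $e^{-\frac12|x|\sin\gamma}\int_{-\infty}^{x}e^{c(y-x)\sin\gamma}(1+|y|)\,|f_j(y)|\,dy$ (and its $x\to\infty$ mirror image) with $c>0$ strictly. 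For the pieces where a genuine exponential margin survives, Young's inequality controls the $L^2_x$ output and a direct supremum bound the $L^\infty_x$ output; the algebraic factor $1+|y|$ forces one to spend the $L^1_x$ component built into $Y_1,Y_2$ precisely on the borderline pieces. Assigning to $v_j$ the part of the result that decays only like the soliton profile and to $u_j$ the faster-decaying part, one reads off $\|\vec\phi_\gamma W_\mp\|_X\lesssim\|\vec f\|_Y$.

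For the last piece, the exponential growth of $\vec\xi_\gamma$ at one spatial infinity is compensated by using $\langle\vec\eta_\gamma,\vec f\rangle_{L^2}=0$ to rewrite $\int_{-\infty}^{x}\vec\eta_\gamma\cdot\vec f=-\int_{x}^{\infty}\vec\eta_\gamma\cdot\vec f$; splitting the ambient $x$-integral at the origin and using whichever of the two representations decays on the region in question, together with the explicit exponential behaviour of $\vec\eta_\gamma$ and $\vec\xi_\gamma$, reduces this term to the same family of one-sided convolution estimates and gives $\big\|\vec\xi_\gamma\int_{-\infty}^{x}\vec\eta_\gamma\cdot\vec f\big\|_X\lesssim\|\vec f\|_Y$. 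Summing the four bounds yields $\|\vec w\|_X\le C\|\vec f\|_Y$ with $C$ depending only on $\gamma$.

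The step I expect to be the main obstacle is not conceptual but the bookkeeping of the several competing exponential rates — those carried by $\vec\phi_\gamma$, $\vec\xi_\gamma$, $\vec\eta_\gamma$ and $Q$ against the weights built into $Y$. One has to verify that each one-sided convolution kernel genuinely has a strictly negative exponent, so that the estimate closes, and that the polynomial factors together with the borderline weights — where that exponent degenerates to zero — are exactly absorbed by the $L^1$ pieces of $\|\cdot\|_{Y_1}$ and $\|\cdot\|_{Y_2}$. In addition, the decomposition $w_j=v_j+u_j$ realizing the $X$-norm of the output must be chosen hand in hand with the splitting $f_j=g_j+h_j$ of the data, so that the $v$- and $u$-parts land in the correct function spaces.
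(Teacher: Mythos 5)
Your proposal follows essentially the same route as the paper's proof: term-by-term estimation of the explicit representation \eqref{wsol}, with $|k(\vec f)|\lesssim\|\vec f\|_{L^2}\lesssim\|\vec f\|_Y$ and $\|\vec\phi_\gamma\|_X=\mathcal O(1)$ for the first term, weighted one-sided integral estimates against near-optimal decompositions $f_j=g_j+h_j$ for the $W_\mp$ terms, and the orthogonality flip $\int_{-\infty}^x\vec\eta_\gamma\cdot\vec f\,dy=-\int_x^\infty\vec\eta_\gamma\cdot\vec f\,dy$ for the $\vec\xi_\gamma$ term. Two small remarks: the paper streamlines the middle terms via the observation $\|a\vec\phi_\gamma\|_X\le 2\|a\|_{L^\infty}$ for any scalar $a\in L^\infty(\mathbb{R})$, so only $\|W_\mp\|_{L^\infty}$ needs bounding, and the $L^1$ component of $Y_j$ is actually spent on the exactly balanced exponential piece of the kernel (where $e^{2y\sin\gamma}Q^2(y)=\mathcal O(1)$) rather than on the polynomial factor $1+|y|$, which still carries exponential decay.
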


\begin{proof}
The solution $\vec{w}$ is given by the explicit formula \eqref{wsol}.
We assume now that $\vec{f}$ belongs to the exponentially weighted space $Y$ and prove that
$\vec{w}$ belongs to the exponentially weighted space $X$.

Since $\|a\vec{\phi}_{\gamma}\|_X\leq 2\|a\|_{L^{\infty}}$ for any $a\in L^{\infty}(\mathbb{R})$,
$k(\vec{f})$ is a continuous linear functional on $L^2(\mathbb{R};\mathbb{C}^2)$,
and $Y$ is embedded into $L^2(\mathbb{R};\mathbb{C}^2)$, we have
$$
\|k(\vec{f})\vec{\phi}_{\gamma}\|_X\lesssim |k(\vec{f})|\lesssim \|\vec{f}\|_{L^2}\lesssim\|\vec{f}\|_Y.
$$
The second term in \eqref{wsol} is estimated by
\begin{eqnarray*}
\|W_- \vec{\phi}_{\gamma} \|_X &\lesssim&  \left\|\int_{-\infty}^{x}e^{-\frac{1}{2}y\sin\gamma}(e^{2y\sin\gamma}+2\cos\gamma+y\sin(2\gamma))Q(y)f_1(y)dy\right\|_{L^{\infty}_x}\\
 &\lesssim& \inf_{f_1=g_1+h_1}\left(\|g_1e^{-\frac{1}{2}x \sin\gamma}|\cosh(x\sin \gamma-i\gamma/2)|\|_{L^1_x}
 + \|h_1e^{\frac{1}{2}x \sin\gamma}|\cosh(x\sin \gamma-i\gamma/2)|\|_{L^2_x}\right) \\
 &\leq& \|\vec{f}\|_{Y_1}.
 \end{eqnarray*}
Similarly, the third term in \eqref{wsol} is estimated by $\|W_+ \vec{\phi}_{\gamma} \|_X\lesssim \|\vec{f}\|_{Y_2}$.
The last term in \eqref{wsol} is estimated as follows:
$$
\left\|\vec{\xi}_{\gamma}\int_{-\infty}^{x}\vec{\eta}_{\gamma}(y) \cdot \vec{f}(y) dy\right \|_X \leq N_1+N_2+N_3+N_4,
$$
where
\begin{eqnarray*}
N_1&=&\left\|e^{-x\sin\gamma}\int_{-\infty}^{x} \vec{\eta}_{\gamma}(y) \cdot \vec{f}(y) dy\right\|_{L^{\infty}_x \cap L^2_x},\\
N_2&=&\left\|x\sin(2\gamma)\int_{-\infty}^{x} \vec{\eta}_{\gamma}(y) \cdot \vec{f}(y) dy\right\|_{L^{\infty}_x},\\
N_3&=&\left\|e^{x\sin\gamma}\int_x^{\infty} \vec{\eta}_{\gamma}(y) \cdot \vec{f}(y) dy\right\|_{L^{\infty}_x \cap L^2_x},\\
N_4&=&\left\|(2\cos\gamma+x\sin(2\gamma))\int_{- \infty}^{x} \vec{\eta}_{\gamma}(y) \cdot \vec{f}(y) dy\right\|_{L^{\infty}_x}.
\end{eqnarray*}
Since $|\vec{\eta}_{\gamma}(x)|\lesssim e^{-\frac{|x|}{2}\sin\gamma}$ and
$\|e^{\frac{|x|}{2}\sin\gamma}\vec{f}\|_{L^2_x} \lesssim \|\vec{f}\|_Y$ for all $x\in \mathbb{R}$, we have
\begin{eqnarray*}
N_1&\lesssim& \left\|\int_{-\infty}^x e^{-(y-x)\sin\gamma} e^{\frac{|y|}{2}\sin\gamma}(|f_1|+|f_2|)dy \right\|_{L^{\infty}_x \cap L^2_x} \\
&\leq& \|e^{\frac{|x|}{2}\sin\gamma}f_1\|_{L^2_x}+\|e^{\frac{|x|}{2}\sin\gamma}f_2\|_{L^2_x} \\
&\lesssim& \|\vec{f}\|_Y.
\end{eqnarray*}
The other terms $N_2$, $N_3$, and $N_4$ are estimated similarly. All together, these estimates justify
the bound $\|\vec{w}\|_X\leq C\|\vec{f}\|_Y$ for a $\vec{f}$-independent positive constant $C$.
\end{proof}

\begin{lem} \label{bound2}
Under the condition (\ref{bound-eigenvalue-before}), assume that $\lambda = e^{i \gamma/2}$ is the eigenvalue
of the spectral problem (\ref{laxeq2}) for the eigenvector $\vec{\psi}\in H^1(\mathbb{R};\mathbb{C}^2)$
determined in Lemma \ref{onebound1}. Then, the eigenvector can be written in the form (\ref{unitary}) with
\begin{equation}
\label{representation-psi}
\vec{\phi}(x) =
\left[\begin{matrix}e^{\frac{x\sin\gamma}{2}}(1+r_{11}(x))+e^{-\frac{x\sin\gamma}{2}}r_{12}(x)\\
e^{\frac{x\sin\gamma}{2}}r_{21}(x)+e^{-\frac{x\sin\gamma}{2}}(1+r_{22}(x)) \end{matrix} \right]
\left|\sech\left(x\sin\gamma -i \frac{\gamma}{2} \right)\right|,
\end{equation}
where components $r_{ij}$ for $1 \leq i,j \leq 2$ satisfy the bound
\begin{equation}
\label{bound-on-r}
\| r_{11}\|_{L^{\infty}}+\|r_{12}\|_{L^2\cap L^{\infty}} +\|r_{21}\|_{L^2\cap L^{\infty}}+\|r_{22}\|_{L^{\infty}} \lesssim \|u_0-u_{\gamma}\|_{L^2}+\|v_0-v_{\gamma}\|_{L^2}.
\end{equation}
\end{lem}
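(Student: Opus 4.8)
The plan is to rewrite the spectral problem \eqref{laxeq2} as a perturbation of the limiting problem $\partial_x\vec\phi = M_\gamma\vec\phi$ at the soliton $(u_\gamma,v_\gamma)$ with $\lambda = e^{i\gamma/2}$, and to upgrade the $H^1$ bound of Lemma \ref{onebound1} to the weighted spaces $X$, $Y$ by means of the bounded resolvent $P_\gamma(\partial_x - M_\gamma)^{-1}\hat P_\gamma : Y \to X$ furnished by Lemmas \ref{ode} and \ref{wfbound}. By Remark \ref{remark-normalization} we take $\lambda = e^{i\gamma/2}$ on the unit circle; Lemma \ref{onebound1} together with smoothness of the soliton family in $\gamma$ gives $|\gamma-\gamma_0|\lesssim\epsilon$ and
$$
d_0 := \|u_0-u_\gamma\|_{L^2}+\|v_0-v_\gamma\|_{L^2} \lesssim \epsilon .
$$
Write $u_0 = u_\gamma + u_s$, $v_0 = v_\gamma + v_s$. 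The eigenvector $\vec\psi$ of Lemma \ref{onebound1} is determined up to a scalar and, via \eqref{unitary}, yields a vector $\vec\phi$ that is $H^1$-close to $\vec\phi_\gamma$; normalizing so that $\langle\sigma_3\vec\eta_\gamma,\vec\phi\rangle_{L^2} = \langle\sigma_3\vec\eta_\gamma,\vec\phi_\gamma\rangle_{L^2}$, the vector $\vec w := \vec\phi-\vec\phi_\gamma$ lies in $\Span\{\sigma_3\vec\eta_\gamma\}^\perp$, so $P_\gamma\vec w = \vec w$. Using $|u_\gamma| = |v_\gamma|$ and the Taylor expansion \eqref{Taylor-series-expansion} of $f^2$, the transformed equation \eqref{sys} takes the form
$$
(\partial_x - M_\gamma)\vec w = \Delta M\,(\vec\phi_\gamma+\vec w),\qquad \Delta M := M(u_0,v_0,e^{i\gamma/2}) - M_\gamma,
$$
where $\Delta M$ is purely off-diagonal. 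Since $\vec\phi\in H^1(\mathbb{R};\mathbb{C}^2)$ solves the transformed problem, the right-hand side lies in $L^2(\mathbb{R};\mathbb{C}^2)\cap\Span\{\vec\eta_\gamma\}^\perp = \Range(\partial_x - M_\gamma)$; hence $\hat P_\gamma$ acts as the identity on it and $\vec w = P_\gamma(\partial_x - M_\gamma)^{-1}\hat P_\gamma\,\Delta M\,(\vec\phi_\gamma+\vec w)$.

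The key step is a bound on $\Delta M$ in the weighted norms. Each entry of $\Delta M$ splits as $u_\gamma(f^2-1) + u_s f^2$ (and similarly with $v_\gamma$, $v_s$): the first summand decays exponentially, since $u_\gamma,v_\gamma$ do and $\|f^2-1\|_{L^\infty}\lesssim d_0$ by Cauchy--Schwarz, while the second is bounded in $L^2$ by $d_0$ because $|f| = 1$. Matching these against the exponential weights defining $X$ and $Y$ — and using that the $L^2\cap L^\infty$ components of an $X$-vector, multiplied against an $L^2$ datum, land in the $L^1$ part of a $Y$-norm by Cauchy--Schwarz — one obtains
$$
\|\Delta M\,\vec\phi_\gamma\|_Y \lesssim d_0,\qquad \|\Delta M\,\vec g\|_Y \lesssim d_0\,\|\vec g\|_X \quad (\vec g\in X).
$$
Combined with $\|P_\gamma(\partial_x - M_\gamma)^{-1}\hat P_\gamma\|_{Y\to X}\lesssim 1$ from Lemmas \ref{ode}--\ref{wfbound}, the map $\vec g\mapsto P_\gamma(\partial_x - M_\gamma)^{-1}\hat P_\gamma\,\Delta M(\vec\phi_\gamma+\vec g)$ is, for $\epsilon$ small, a contraction on a ball of radius $\mathcal{O}(d_0)$ in $X\cap\Span\{\sigma_3\vec\eta_\gamma\}^\perp$; let $\vec w_\ast$ be its fixed point, $\|\vec w_\ast\|_X\lesssim d_0$. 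From $\partial_x\vec w_\ast = M_\gamma\vec w_\ast + \Delta M(\vec\phi_\gamma+\vec w_\ast)\in L^2$ we get $\vec w_\ast\in H^1$. On the other hand, $\vec w$ from the first paragraph is a fixed point of the same map inside $H^1$, and by \eqref{Lbound} that map is also a contraction on $H^1$; hence $\vec w = \vec w_\ast\in X$ with $\|\vec w\|_X\lesssim d_0$.

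It remains to read off \eqref{representation-psi}. Since $\|\vec w\|_X<\infty$, the definition of the $X$-norm (with $|\cosh(x\sin\gamma - i\gamma/2)| = Q(x)^{-1}$, $Q(x) = |\sech(x\sin\gamma - i\gamma/2)|$) lets us write $w_1 = e^{\frac{x}{2}\sin\gamma}Q\,r_{11} + e^{-\frac{x}{2}\sin\gamma}Q\,r_{12}$ with $\|r_{11}\|_{L^\infty}+\|r_{12}\|_{L^2\cap L^\infty}\lesssim\|w_1\|_{X_1}$, and $w_2 = e^{\frac{x}{2}\sin\gamma}Q\,r_{21} + e^{-\frac{x}{2}\sin\gamma}Q\,r_{22}$ with $\|r_{21}\|_{L^2\cap L^\infty}+\|r_{22}\|_{L^\infty}\lesssim\|w_2\|_{X_2}$. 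Since $\vec\phi = \vec\phi_\gamma+\vec w$ and $\vec\phi_\gamma = (e^{\frac{x}{2}\sin\gamma},e^{-\frac{x}{2}\sin\gamma})^t Q$, this is precisely the representation \eqref{representation-psi}, and $\|\vec w\|_X\lesssim d_0$ yields \eqref{bound-on-r}. The main obstacle is the weighted estimate on $\Delta M$: one must carefully bookkeep the exponentially weighted $L^2$, $L^\infty$ and $L^1$ norms defining $X$ and $Y$ against the off-diagonal structure of $\Delta M$ and the growing/decaying components of $\vec\phi_\gamma$, choosing for each product the splitting that avoids the exponentially growing weight.
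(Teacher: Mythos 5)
Your proposal is correct and follows essentially the same route as the paper: decompose $\vec\phi=\vec\phi_\gamma+\vec\phi_s$ with the $\sigma_3\vec\eta_\gamma$-normalization, observe the right-hand side automatically lies in $\Range(\partial_x-M_\gamma)$, invoke the $Y\to X$ bound on $P_\gamma(\partial_x-M_\gamma)^{-1}\hat P_\gamma$ from Lemmas \ref{ode}--\ref{wfbound}, and close with the weighted estimates on the anti-diagonal perturbation matrix. Your explicit contraction-in-$X$ plus identification with the $H^1$ solution is a slightly more careful packaging of the paper's direct absorption argument, but it is not a different method.
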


\begin{proof}
Recall the projection operators $P_{\gamma} : L^2(\mathbb{R};\mathbb{C}^2) \rightarrow
L^2(\mathbb{R};\mathbb{C}^2)\cap \Span\{ \sigma_3 \vec{\eta}_{\gamma}\}^{\perp}$
and $\hat{P}_{\gamma} : L^2(\mathbb{R};\mathbb{C}^2) \rightarrow
L^2(\mathbb{R};\mathbb{C}^2)\cap \Span\{ \vec{\eta}_{\gamma}\}^{\perp}$
introduced in the proof of Lemma \ref{onebound1}.
The existence of the eigenvector $\vec{\phi} \in H^1(\mathbb{R};\mathbb{C}^2)$ of the spectral problem (\ref{sys})
for the eigenvalue $\lambda = e^{i \gamma/2}$ has been established in Lemma \ref{onebound1}.
Therefore, we are using operators $P_{\gamma}$ and $\hat{P}_{\gamma}$ to prove additional properties of
the eigenvector $\vec{\phi}$.

Using the projection operator $P_{\gamma}$, we decompose $\vec{\phi}=\vec{\phi}_{\gamma}+\vec{\phi}_s$
and rewrite the spectral problem (\ref{sys}) in the form
\begin{equation} \label{eqDL}
\left( \partial_x - M_{\gamma} \right) \vec{\phi}_s = \Delta \widetilde{M}(\vec{\phi}_{\gamma}+\vec{\phi}_s),
\end{equation}
where $\Delta \widetilde{M}$ is the anti-diagonal matrix that contains the perturbation terms $u_0 - u_{\gamma}$
and $v_0 - v_{\gamma}$ only. Because  $\vec{\phi}_s \in H^1(\mathbb{R};\mathbb{C}^2)$ exists by Lemma \ref{onebound1},
we realize that $\Delta \widetilde{M} (\vec{\phi}_{\gamma}+\vec{\phi}_s) =
\hat{P} \Delta \widetilde{M} (\vec{\phi}_{\gamma}+\vec{\phi}_s)$,
which yields equivalently the constraint
\begin{equation}
\label{LS-constraint}
\langle \vec{\eta}_{\gamma}, \Delta \widetilde{M} (\vec{\phi}_{\gamma}+\vec{\phi}_s) \rangle_{L^2} = 0.
\end{equation}
Therefore, we write the perturbed equation \eqref{eqDL} in the form
\begin{equation}\label{MPeq}
\vec{\phi}_{s} = P_{\gamma} \left(\partial_x - M_{\gamma} \right)^{-1} \hat{P}_{\gamma}
\Delta \widetilde{M}(\vec{\phi}_{\gamma} + \vec{\phi}_{s}).
\end{equation}
Note that the operator $\hat{P}_{\gamma}$ applies to the sum of the two terms in the right-hand-side of (\ref{MPeq})
thanks to (\ref{LS-constraint}) and cannot be applied to each term separately.

Since $\Delta \widetilde{M}$ is anti-diagonal, for any $\vec{\zeta} = (\zeta_1,\zeta_2)^t \in X$,
we have
$$
\|\Delta \widetilde{M}\vec{\zeta}\|_Y =
\|(\Delta \widetilde{M})_{1,2}\zeta_2\|_{Y_1} + \|(\Delta \widetilde{M})_{2,1}\zeta_1\|_{Y_2},
$$
which is bounded as follows:
\begin{eqnarray}
\label{bound-1-tech}
\|(\Delta \widetilde{M})_{1,2}\zeta_2\|_{Y_1} &\lesssim&  (\|u_0 - u_{\gamma}\|_{L^2}+\|v_0 - v_{\gamma}\|_{L^2})\|\zeta_2\|_{X_2},\\
\label{bound-2-tech}
\|(\Delta \widetilde{M})_{2,1}\zeta_1\|_{Y_2} &\lesssim& (\|u_0 - u_{\gamma}\|_{L^2}+\|v_0 - v_{\gamma}\|_{L^2})\|\zeta_1\|_{X_1}.
\end{eqnarray}
Bound (\ref{bound-1-tech}) follows simply from
\begin{eqnarray*}
\|(\Delta \widetilde{M})_{1,2}\zeta_2\|_{Y_1} &\leq& \inf_{\zeta_2=\xi_2 + \eta_2}
\left(\|(\Delta \widetilde{M})_{1,2} \xi_2 e^{\frac{x}{2} \sin\gamma} R(x) \|_{L^2_x}
+ \|(\Delta \widetilde{M})_{1,2} \eta_2 e^{-\frac{x}{2}\sin\gamma} R(x) \|_{L^2_x \cap L^1_x} \right)\\
&\lesssim&
\|(\Delta \widetilde{M})_{1,2}\|_{L^2} \inf_{\zeta_2 = \xi_2 +\eta_2}
\left(\|\xi_2 e^{\frac{x}{2}\sin\gamma}R(x)\|_{L^{\infty}_x}
+\|\eta_2 e^{-\frac{x}{2}\sin\gamma} R(x)\|_{L^{\infty}_x \cap L^2_x} \right)\\
& = &\|(\Delta \widetilde{M})_{1,2}\|_{L^2}\|\zeta_2\|_{X_2},
\end{eqnarray*}
where $R(x) = \left|\cosh\left(x\sin\gamma-i \frac{\gamma}{2}\right)\right|$. Bound (\ref{bound-2-tech})
is obtained similarly. Because $\vec{\phi}_{\gamma} \in X$, the bound
$\|\vec{w}\|_X\lesssim \|\vec{f}\|_Y$ in Lemma \ref{wfbound}
and bounds \eqref{bound-1-tech} and \eqref{bound-2-tech} imply
\begin{eqnarray*}
\| P_{\gamma} \left(\partial_x - M_{\gamma} \right)^{-1} \hat{P}_{\gamma}
\Delta \widetilde{M}(\vec{\phi}_{\gamma} + \vec{\phi}_{s}) \|_X
& \lesssim & \| \Delta \widetilde{M}(\vec{\phi}_{\gamma} + \vec{\phi}_{s}) \|_{Y} \\
& \lesssim & \left(\|u_0 - u_{\gamma}\|_{L^2}+\|v_0 - v_{\gamma}\|_{L^2} \right) (1 + \| \vec{\phi}_s \|_X ).
\end{eqnarray*}
Since $\|u_0 - u_{\gamma}\|_{L^2} + \|v_0 - v_{\gamma}\|_{L^2}$ is sufficiently small,
the component $\vec{\phi}_s$ in (\ref{MPeq}) satisfies the bound
\begin{equation}
\label{bound-3-tech}
\|\vec{\phi}_{s}\|_X \lesssim \|u_0 - u_{\gamma}\|_{L^2} + \|v_0 - v_{\gamma}\|_{L^2}.
\end{equation}
This completes the proof of the bound (\ref{bound-on-r}) in the representation (\ref{representation-psi}),
because the bound (\ref{bound-3-tech}) on $\vec{\phi}_s$ in Banach space $X$ yields the bounds on the components
$r_{ij}$ in the corresponding spaces.
\end{proof}

\begin{cor} \label{bound3}
In addition to the assumptions of Lemma \ref{bound2},
assume that $(u_0,v_0) \in H^m(\mathbb{R})$ for an integer $m \geq 0$.
Then, $r_{ij}$ for $1 \leq i,j \leq 2$
defined by \eqref{representation-psi} are $C^m$-functions of $x$.
\end{cor}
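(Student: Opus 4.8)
The plan is to follow the regularity of $\vec{\phi}$ through the explicit construction underlying the representation \eqref{representation-psi}, exploiting that this construction only ever integrates the data and never differentiates it, so that no smoothness is lost.

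I would first recall, from the proof of Lemma \ref{bound2}, that the eigenvector is written as in \eqref{unitary} with $\vec{\phi} = \vec{\phi}_{\gamma} + \vec{\phi}_{s}$, where $\vec{\phi}_{s}$ equals the solution $\vec{w}$ of the inhomogeneous equation \eqref{inhomeq} produced by formula \eqref{wsol} of Lemma \ref{ode} for the inhomogeneity $\vec{f} = \Delta \widetilde{M}(\vec{\phi}_{\gamma} + \vec{\phi}_{s})$, and that the functions $r_{ij}$ in \eqref{representation-psi} are read off from \eqref{wsol}. Since the fundamental solutions $\vec{\phi}_{\gamma}$ and $\vec{\xi}_{\gamma}$ are explicit combinations of exponentials and polynomials in $x$, substituting \eqref{wsol} into \eqref{representation-psi} expresses each $r_{ij}$ as a linear combination, with $C^{\infty}$ coefficients, of the constant $k(\vec{f})$ and of the antiderivatives $W_{-}(x)$, $W_{+}(x)$, and $\int_{-\infty}^{x}\vec{\eta}_{\gamma}(y)\cdot\vec{f}(y)\,dy$. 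Hence the whole statement reduces to showing that $\vec{f} \in C^{m-1}(\mathbb{R})$ when $m \geq 1$, and $\vec{f} \in L^{1}_{\mathrm{loc}}(\mathbb{R})$ when $m = 0$: the antiderivative of such a function is $C^{m}$ (respectively continuous), and multiplication by $C^{\infty}$ functions preserves this.

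Next I would establish the regularity of $\vec{f} = \Delta \widetilde{M}\,\vec{\phi}$ by a bootstrap. The anti-diagonal matrix $\Delta \widetilde{M}$ is built from $u_0$, $v_0$ and $f^{2} = e^{\frac{i}{2}\int_{0}^{x}(|u_0|^{2}-|v_0|^{2})\,dy}$. For $m \geq 1$ I would use that $H^{m}(\mathbb{R})$ is a Banach algebra continuously embedded in $C^{m-1}(\mathbb{R})$: then $|u_0|^{2}-|v_0|^{2} \in H^{m}(\mathbb{R})$, so its antiderivative is of class $C^{m}$, and therefore $f^{2} \in C^{m}(\mathbb{R})$; together with $u_0, v_0 \in C^{m-1}(\mathbb{R})$ this gives $\Delta \widetilde{M} \in C^{m-1}(\mathbb{R})$ (for $m = 0$ its entries are only in $L^{2}(\mathbb{R})$). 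Since $\vec{\phi}$ is continuous, being an element of $H^{1}(\mathbb{R};\mathbb{C}^{2})$ by Lemma \ref{onebound1}, and satisfies the linear system \eqref{sys} whose coefficient matrix $M(u_0,v_0,\lambda)$ has off-diagonal entries in $C^{m-1}(\mathbb{R})$ and constant diagonal entries, applying the identity $\partial_{x}\vec{\phi} = M(u_0,v_0,\lambda)\vec{\phi}$ repeatedly raises the regularity of $\vec{\phi}$ by one at each step, until it stabilizes at $C^{m}(\mathbb{R})$. Therefore $\vec{f} = \Delta \widetilde{M}\,\vec{\phi} \in C^{m-1}(\mathbb{R})$ for $m \geq 1$ and $\vec{f} \in L^{2}(\mathbb{R}) \subset L^{1}_{\mathrm{loc}}(\mathbb{R})$ for $m = 0$, which by the previous paragraph completes the argument.

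The only point that requires genuine care is the claim that $f^{2}$ is exactly one degree smoother than $(u_0,v_0)$; this combines the algebra property of $H^{m}(\mathbb{R})$ with the gain of a derivative under integration. Everything else is bookkeeping, the structural fact being that the construction in Lemma \ref{bound2}, and hence the formulas for the $r_{ij}$, involve only integrations of $\vec{f}$ --- never its derivatives --- so that the regularity of $\vec{f}$ inherited from $(u_0,v_0)$ is preserved, and in fact improved by one, in each $r_{ij}$.
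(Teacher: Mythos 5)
Your proof is correct, but it takes a genuinely different route from the paper's. The paper differentiates the operator equation \eqref{eqDL} $m$ times, checks that the resulting inhomogeneity $\vec{r}_m + \Delta\widetilde{M}\partial_x^m\vec{\phi}_s$ still lies in the weighted space $Y$ (with norm controlled by $\|u_0-u_\gamma\|_{H^m}+\|v_0-v_\gamma\|_{H^m}$), and re-applies the bounded inverse $P_{\gamma}(\partial_x-M_{\gamma})^{-1}\hat{P}_{\gamma}: Y \to X$ to conclude $\partial_x^m\vec{\phi}_s \in H^1(\mathbb{R})\cap X$, hence $\partial_x^m r_{ij}\in C(\mathbb{R})$. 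You instead never differentiate the fixed-point equation: you bootstrap classical regularity of $\vec{\phi}$ directly through the first-order system $\partial_x\vec{\phi}=M(u_0,v_0,\lambda)\vec{\phi}$ (using $H^m\hookrightarrow C^{m-1}$ and the extra derivative gained by $f^2$ through integration), and then observe that the variation-of-parameters formula \eqref{wsol} expresses the $r_{ij}$ as smooth multiples of antiderivatives of $\vec{f}=\Delta\widetilde{M}\vec{\phi}\in C^{m-1}$, so each $r_{ij}$ is $C^m$. This is more elementary and avoids the weighted spaces entirely; its one cost is that it yields only the qualitative $C^m$ statement, whereas the paper's induction also produces the quantitative bounds $\|\partial_x^m\vec{\phi}_s\|_X \lesssim \|u_0-u_\gamma\|_{H^m}+\|v_0-v_\gamma\|_{H^m}$, and it is this stronger weighted-space information (not the bare $C^m$ regularity asserted in the corollary) that is implicitly invoked at the end of Lemma \ref{pqbound} to conclude $(p_0,q_0)\in H^m(\mathbb{R})$. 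So your argument fully proves the corollary as stated, but if you wanted it to serve the same downstream purpose you would need to supplement it with $L^2$ control of the derivatives of the $r_{ij}$, which your explicit-formula approach could also deliver with a little extra bookkeeping.
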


\begin{proof}
The statement is proved for $m = 0$ in Lemma \ref{bound2}, because $r_{ij}$ are bounded functions
according to the bound (\ref{bound-on-r}) and they are continuous functions
since $\vec{\phi} \in H^1(\mathbb{R};\mathbb{C}^2)$.

For $m = 1$, we differentiate the equation \eqref{eqDL} with respect to $x$ to get
\begin{equation} \label{eqDL-1-deriv}
\left( \partial_x - M_{\gamma} \right) \partial_x\vec{\phi}_s = \vec{r} +
R\vec{\phi}_s+\Delta \widetilde{M}\partial_x\vec{\phi}_s,
\end{equation}
where $\vec{r} := \partial_x(\Delta \widetilde{M} \vec{\phi}_{\gamma})$ and
$R := \partial_x(M_{\gamma})+\partial_x( \Delta \widetilde{M})$.
Recall that $\vec{\phi}_s \in X$ by Lemma \ref{bound2}.
If $(u_0,v_0) \in H^1(\mathbb{R})$, then $\vec{r} + R \vec{\phi}_s \in Y$
according to the bounds
\begin{eqnarray*}
\|\vec{r}\|_Y &\lesssim& \|u_0-u_{\gamma}\|_{H^1}+\|v_0-v_{\gamma}\|_{H^1}, \\
\|R \vec{\phi}_s\|_Y &\lesssim&(\|u_0-u_{\gamma}\|_{H^1}+\|v_0-v_{\gamma}\|_{H^1}) \|\vec{\phi}_s\|_X.
\end{eqnarray*}
Since $\vec{r} + R \vec{\phi}_s \in Y$ and
$\partial_x \vec{\phi}_s \in L^2(\mathbb{R})$ from solution of (\ref{MPeq}),
we have
$$
\vec{r} + R\vec{\phi}_s+\Delta \widetilde{M}\partial_x\vec{\phi}_s =
\hat{P}_{\gamma}(\vec{r} + R\vec{\phi}_s+\Delta \widetilde{M}\partial_x\vec{\phi}_s)
$$
Therefore, we can write the derivative equation \eqref{eqDL-1-deriv} in the form
\begin{equation}\label{MPeq-1-deriv}
\partial_x\vec{\phi}_{s} = P_{\gamma} \left(\partial_x - M_{\gamma} \right)^{-1}\hat{P}_{\gamma}
(\vec{r} + R\vec{\phi}_s+\Delta \widetilde{M}\partial_x\vec{\phi}_s).
\end{equation}
Using bounds \eqref{bound-1-tech} and \eqref{bound-2-tech}, we obtain
\begin{equation}
\label{bound-4-tech}
\| \partial_x \vec{\phi}_{s}\|_X \lesssim \| \vec{r} + R \vec{\phi}_s \|_{Y} < \infty,
\end{equation}
from which it follows that $\partial_x \vec{\phi}_s \in H^1(\mathbb{R}) \cap X$, hence
$\partial_x r_{ij} \in C(\mathbb{R})$ for $1 \leq i,j \leq 2$.

For $m \geq 2$, differentiating \eqref{eqDL} $m$ times, we have the expression
\begin{equation} \label{eqDL-m-deriv}
\left( \partial_x - M_{\gamma} \right) \partial_x^m\vec{\phi}_s = \vec{r}_m
+ \Delta\widetilde{M}\partial_x^m\vec{\phi}_s,
\end{equation}
where $\vec{r}_m := \partial_x^m(\Delta\widetilde{M}\vec{\phi}_{\gamma})+[\partial_x^m,M_{\gamma}+\Delta\widetilde{M}]\vec{\phi}_s$
and we denote $[\partial_x,f]g=\partial_x(fg)-f\partial_x(g)$. We note that the term
$[\partial_x^m,M_{\gamma}+\Delta\widetilde{M}]\vec{\phi}_s$ does not contain the $m$-th
derivative of $\vec{\phi}_s$. By an induction similar to the case $m = 1$, we find
that $\vec{r}_m \in Y$ according to the bound
$$
\| \vec{r}_m \|_Y \lesssim \|u_0-u_{\gamma}\|_{H^m}+\|v_0-v_{\gamma}\|_{H^m}.
$$
Hence if $(u_0,v_0)\in H^m(\mathbb{R})$, then $\partial_x^m\vec{\phi}_s \in H^1(\mathbb{R}) \times X$,
hence $\partial_x^m r_{ij}\in C(\mathbb{R})$ for $1\leq i,j\leq 2$.
\end{proof}

\begin{lem} \label{pqbound}
Under the condition (\ref{bound-eigenvalue-before}), assume that $\lambda = e^{i \gamma/2}$ is the eigenvalue
of the spectral problem (\ref{laxeq2}) for the eigenvector $\vec{\psi}\in H^1(\mathbb{R};\mathbb{C}^2)$
determined in Lemma \ref{onebound1} and define
\begin{equation}\label{p}
p_0 := -u_0 \frac{e^{-i\gamma/2}|\psi_1|^2+e^{i\gamma/2}|\psi_2|^2}{e^{i\gamma/2}|\psi_1|^2+e^{-i\gamma/2}|\psi_2|^2} +
\frac{2i\sin\gamma \overline{\psi}_1\psi_2}{e^{i\gamma/2}|\psi_1|^2+e^{-i\gamma/2}|\psi_2|^2},
\end{equation}
\begin{equation}\label{q}
q_0 := -v_0 \frac{e^{i\gamma/2}|\psi_1|^2+e^{-i\gamma/2}|\psi_2|^2}{e^{-i\gamma/2}|\psi_1|^2+e^{i\gamma/2}|\psi_2|^2} -
\frac{2i\sin\gamma \overline{\psi}_1\psi_2}{e^{-i\gamma/2}|\psi_1|^2+e^{i\gamma/2}|\psi_2|^2}.
\end{equation}
Then, $(p_0,q_0) \in L^2(\mathbb{R})$ satisfy the bound
\begin{equation}
\label{bound-p-q}
\|p_0\|_{L^2} + \|q_0\|_{L^2} \lesssim \|u_0-u_{\gamma}\|_{L^2}+\|v_0-v_{\gamma}\|_{L^2}.
\end{equation}
If, in addition, $(u_0,v_0)\in H^m(\mathbb{R})$ for an integer $m \geq 1$,
then $(p_0,q_0) \in H^m(\mathbb{R})$.
\end{lem}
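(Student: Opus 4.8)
\emph{Overall plan.} The plan is to substitute the explicit eigenvector representation of Lemma~\ref{bound2} into the B\"acklund formulas \eqref{p}--\eqref{q} and to exploit the fact, established in Section~2, that the same transformation maps $(u_\gamma,v_\gamma,\vec\phi_\gamma,\lambda)$ to the zero solution. First I would record the reductions that make the formulas tractable: by \eqref{unitary} we have $\vec\psi=\mathrm{diag}(f,\overline f)\vec\phi$ with $|f|=1$, so $|\psi_1|=|\phi_1|$, $|\psi_2|=|\phi_2|$, $\overline\psi_1\psi_2=\overline{f}^{\,2}\,\overline\phi_1\phi_2$, and by \eqref{representation-psi} the components $\phi_1,\phi_2$ share the common factor $Q(x)=\left|\sech(x\sin\gamma-i\gamma/2)\right|$, which appears squared in numerator and denominator of \eqref{p}--\eqref{q} and cancels. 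Writing $\widehat\phi_j:=\phi_j/Q$ leaves $p_0=N/D$ with $D=e^{i\gamma/2}|\widehat\phi_1|^2+e^{-i\gamma/2}|\widehat\phi_2|^2$ and a companion formula for $q_0$. Then I would establish two pointwise facts: from $|e^{i\gamma/2}a+e^{-i\gamma/2}b|^2=a^2+b^2+2ab\cos\gamma\ge\cos^2(\gamma/2)(a+b)^2$ for $a,b\ge0$ and $\gamma\in(0,\pi)$ one gets $|D|\ge\cos(\gamma/2)(|\widehat\phi_1|^2+|\widehat\phi_2|^2)$, and since \eqref{representation-psi}, \eqref{bound-on-r} and smallness of $\epsilon$ give $|\widehat\phi_1|^2\ge\tfrac14e^{x\sin\gamma}$ for $x\ge0$ and $|\widehat\phi_2|^2\ge\tfrac14e^{-x\sin\gamma}$ for $x\le0$, this yields the crucial lower bound $|D|\gtrsim e^{|x|\sin\gamma}$; the same algebra shows that the fraction multiplying $u_0$ in \eqref{p} has modulus exactly $1$.

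\emph{The $L^2$ bound.} Next I would split $p_0=p_0^{(1)}+p_0^{(2)}$ along $u_0=u_\gamma+u_s$, with $p_0^{(2)}$ the term proportional to $u_s$. Because the fraction above has unit modulus, $|p_0^{(2)}|=|u_s|$, hence $\|p_0^{(2)}\|_{L^2}=\|u_0-u_\gamma\|_{L^2}$. For $p_0^{(1)}$, the key is that its numerator vanishes identically once $r_{ij}\equiv0$ and $f\equiv1$ — this is exactly the identity $\mathcal B(u_\gamma,v_\gamma,\vec\phi_\gamma,\lambda)=(0,0)$, which reduces to $u_\gamma\cdot2\cosh(x\sin\gamma-i\gamma/2)=2i\sin\gamma$. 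Subtracting this zero, the numerator of $p_0^{(1)}$ becomes a finite sum of monomials, each carrying an explicit factor $r_{ij}$ (entering through $|\widehat\phi_j|^2-e^{\pm x\sin\gamma}$ and $\overline{\widehat\phi_1}\widehat\phi_2-1$) or a factor $\overline{f}^{\,2}-1$, times $u_\gamma$ or a factor bounded after division by $D$, and times at most one exponential $e^{\pm x\sin\gamma}$. Dividing by $D$ and using $|u_\gamma|\lesssim e^{-|x|\sin\gamma}$ and $|D|^{-1}\lesssim e^{-|x|\sin\gamma}$, each monomial is pointwise dominated by $e^{-|x|\sin\gamma}|r_{11}|$ or $e^{-|x|\sin\gamma}|r_{22}|$, by bounded factors times $|r_{12}|+e^{-|x|\sin\gamma}$ or $|r_{21}|+e^{-|x|\sin\gamma}$, or by a constant times $|\overline{f}^{\,2}-1|$ times a profile of $L^2$ norm $\lesssim1$. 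Taking $L^2$ norms, invoking \eqref{bound-on-r} — this is precisely why $r_{11},r_{22}$ are needed only in $L^\infty$ whereas $r_{12},r_{21}$ are needed in $L^2\cap L^\infty$, the weights matching up — and using $\|\overline{f}^{\,2}-1\|_{L^\infty}\lesssim\|u_\gamma\|_{L^2}\|u_s\|_{L^2}+\|v_\gamma\|_{L^2}\|v_s\|_{L^2}+\|u_s\|_{L^2}^2+\|v_s\|_{L^2}^2$ (from $|u_\gamma|=|v_\gamma|$ and Cauchy--Schwarz on the phase integral defining $f$), I would conclude $\|p_0^{(1)}\|_{L^2}\lesssim\|u_0-u_\gamma\|_{L^2}+\|v_0-v_\gamma\|_{L^2}$. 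The argument for $q_0$ is identical after exchanging the two components, and \eqref{bound-p-q} follows.

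\emph{Higher regularity.} For the last claim, assume $(u_0,v_0)\in H^m(\mathbb R)$, $m\ge1$. Then $|u_0|^2-|v_0|^2\in H^m$ ($H^m$ is a Banach algebra), so $f\in C^m$ with bounded derivatives, and Corollary~\ref{bound3} gives $r_{ij}\in C^m$; hence $\widehat\phi_1,\widehat\phi_2\in C^m$, the denominator $D\in C^m$ is still bounded below by $\gtrsim e^{|x|\sin\gamma}$, and the two fractions $g_1:=(e^{-i\gamma/2}|\widehat\phi_1|^2+e^{i\gamma/2}|\widehat\phi_2|^2)/D$ and $g_2:=2i\sin\gamma\,\overline{f}^{\,2}\,\overline{\widehat\phi_1}\widehat\phi_2/D$ are $C^m$. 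Using the exponential decay of $Q$ and of the soliton, $g_1$ and all its derivatives are bounded while $g_2$ and all its derivatives lie in $L^2\cap L^\infty$; then $p_0=-u_0g_1+g_2$ and the Leibniz rule yield $\partial_x^kp_0\in L^2$ for $0\le k\le m$, so $p_0\in H^m$ and likewise $q_0\in H^m$.

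\emph{Main obstacle.} The hard part will be the bookkeeping of the second step: once the numerator of $p_0^{(1)}$ is expanded, I must verify that \emph{every} resulting monomial in the $r_{ij}$, in $\overline{f}^{\,2}-1$ and in $e^{\pm x\sin\gamma}$, after division by $D$, decays in $x$ fast enough to be square-integrable, and that the decay (or $L^2$) weight left on each $r_{ij}$ is no worse than the one supplied by \eqref{bound-on-r}. This matching is delicate because $|\widehat\phi_1|^2$ grows like $e^{x\sin\gamma}$ and is tamed only by the simultaneous decay of $u_\gamma$, the sharp lower bound $|D|\gtrsim e^{|x|\sin\gamma}$, and the asymmetric placement of the weights $e^{\pm x\sin\gamma/2}$ on the $r_{ij}$ in \eqref{representation-psi}.
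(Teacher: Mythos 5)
Your proposal is correct and follows essentially the same route as the paper's proof: both exploit the unit modulus of the factor multiplying $u_0$, substitute the representation of Lemma \ref{bound2} to reduce everything to the $r_{ij}$ and the phase factor $f^2$, use the exact cancellation of the B\"acklund formula at $(u_\gamma,v_\gamma,\vec\phi_\gamma)$, bound $\|1-\overline{f}^{\,2}\|_{L^\infty}$ via $|u_\gamma|=|v_\gamma|$ and Cauchy--Schwarz, and obtain $H^m$ regularity by differentiating and invoking Corollary \ref{bound3}. The only difference is cosmetic: the paper multiplies by the unimodular factor $S$ and compares the remaining term to $\overline{f}^{\,2}u_\gamma$, whereas you split $u_0=u_\gamma+u_s$ first; these are the same triangle-inequality computation.
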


\begin{proof}
Let us rewrite equation \eqref{p} as
\begin{equation}\label{p2}
p_0 S = -u_0 + \frac{2i\sin\gamma \overline{\psi}_1\psi_2}{e^{-i\gamma/2}|\psi_1|^2+e^{i\gamma/2}|\psi_2|^2},
\end{equation}
where $S$ is a module-one factor given by
$$
S := \frac{e^{i\gamma/2}|\psi_1|^2+e^{-i\gamma/2}|\psi_2|^2}{e^{-i\gamma/2}|\psi_1|^2+e^{i\gamma/2}|\psi_2|^2}.
$$
We use the representation (\ref{unitary}) and (\ref{representation-psi})
for the eigenvector $\vec{\psi}$. Substituting $\vec{\psi}$ into the second term of \eqref{p2}, we obtain
\begin{eqnarray*}
& \phantom{t} &
\frac{2i\sin\gamma \overline{\psi}_1\psi_2}{e^{-i\gamma/2}|\psi_1|^2+e^{i\gamma/2}|\psi_2|^2} =
\frac{2i\overline{f}^2 \sin\gamma \left[ 1+\epsilon_1 + \epsilon_2 e^{x \sin \gamma}
+ \epsilon_3 e^{-x \sin \gamma} \right]}{e^{x\sin\gamma-i\frac{\gamma}{2}}(1+\epsilon_4)+
e^{-x\sin\gamma+i\frac{\gamma}{2}}(1+\epsilon_5)+\epsilon_6} \\
&=& i\overline{f}^2 \sin\gamma \sech(x\sin\gamma-i\gamma/2)
\left[ 1 + \mathcal{O}(|\epsilon_1|+|\epsilon_4|+|\epsilon_5|+|\epsilon_6|) \right] + \mathcal{O}(|\epsilon_2|+|\epsilon_3|),
\end{eqnarray*}
where $f(x) = e^{\frac{i}{4}\int_0^{x}(|u_0|^2-|v_0|^2)dx}$ and we have defined
\begin{eqnarray*}
\epsilon_1 &:=&\overline{r}_{11}+ r_{22} +\overline{r}_{11}r_{22} +\overline{r}_{12}r_{21},\\
\epsilon_2 &:=&r_{21}+\overline{r}_{11}r_{21},\\
\epsilon_3 &:=&\overline{r}_{12}+\overline{r}_{12}r_{22},\\
\epsilon_4 &:=& r_{11} + \overline{r}_{11} + |r_{11}|^2 + e^{i\gamma} |r_{21}|^2,\\
\epsilon_5 &:=& r_{22} + \overline{r}_{22} + |r_{22}|^2 + e^{-i\gamma} |r_{12}|^2,\\
\epsilon_6 &:=&2e^{-i\gamma/2}\mbox{Re}(r_{12}+ \overline{r}_{11} r_{12}) + 2e^{i\gamma/2}\mbox{Re}(r_{21}+r_{21}\overline{r}_{22}).
\end{eqnarray*}
Bound (\ref{bound-on-r}) in Lemma \ref{bound2} implies that
$$
\|\epsilon_1\|_{L^{\infty}}+\|\epsilon_2\|_{L^{\infty}\cap L^2}+\|\epsilon_3\|_{L^{\infty}\cap L^2}+\|\epsilon_4\|_{L^{\infty}}+\|\epsilon_5\|_{L^{\infty}}+\|\epsilon_6\|_{L^{\infty}\cap L^2}
\lesssim \|u_0-u_{\gamma}\|_{L^2}+\|v_0-v_{\gamma}\|_{L^2}.
$$
Since $u_{\gamma}(x) = i\sin\gamma \sech(x\sin\gamma-i\gamma/2)$ and $|f(x)|=1$ for all $x \in \mathbb{R}$,
we obtain
$$
\left\| \frac{2i \sin\gamma  \overline{\psi}_1\psi_2}{e^{-i\gamma/2}|\psi_1|^2+e^{i\gamma/2}|\psi_2|^2}
-\overline{f}^2u_{\gamma}  \right\|_{L^2} \lesssim \|u_0-u_{\gamma}\|_{L^2}+\|v_0-v_{\gamma}\|_{L^2}.
$$
Applying the triangle inequality to the representation \eqref{p2}, we obtain
\begin{eqnarray*}
\|p_0\|_{L^2} = \|p_0 S\|_{L^2} & \leq & \|u_0 -\overline{f}^2u_{\gamma}\|_{L^2}+\left\| \frac{2i \sin\gamma \overline{\psi}_1\psi_2}{e^{-i\gamma/2}|\psi_1|^2+e^{i\gamma/2}|\psi_2|^2}-\overline{f}^2u_{\gamma}  \right\|_{L^2} \\
& \lesssim & \|u_0-\overline{f}^2u_{\gamma}\|_{L^2}+\|u_0-u_{\gamma}\|_{L^2}+\|v_0-v_{\gamma}\|_{L^2}.
\end{eqnarray*}
Using the Taylor series expansion (\ref{Taylor-series-expansion}) and the triangle inequality, we obtain
\begin{eqnarray*}
\|u_0-\overline{f}^2u_{\gamma}\|_{L^2} &\leq &\|u_0-u_{\gamma}\|_{L^2}+\|u_{\gamma}\|_{L^2}\|1-\overline{f}^2\|_{L^{\infty}} \\
& \lesssim & \|u_0-u_{\gamma}\|_{L^2}+\|v_0-v_{\gamma}\|_{L^2},
\end{eqnarray*}
which finally yields the bound (\ref{bound-p-q}) for $\| p_0\|_{L^2}$. The bound
(\ref{bound-p-q}) for $\| q_0 \|_{L^2}$ is obtained in exactly the same way.

Now if $(u_0,v_0) \in H^m(\mathbb{R})$ for an integer $m \geq 1$,
we can differentiate equation (\ref{p2}) in $x$ $m$ times
and use Corollary \ref{bound3} to conclude that $(p_0,q_0) \in H^m(\mathbb{R})$.
\end{proof}

\section{From a small solution to a perturbed one-soliton solution}

Here we use the auto-B\"{a}cklund transformation given by Proposition \ref{backlund} to transform
a smooth solution of the MTM system (\ref{MTM}) in a $L^2$-neighborhood of the zero solution
to the one in a $L^2$-neighborhood of the one-soliton solution.

Let $(p_0,q_0) \in H^2(\mathbb{R})$ be the initial
data for the MTM system (\ref{MTM}), which is sufficiently small in $L^2$ norm. Let $\vec{\phi}$
be a solution of the linear equation
\begin{equation}
\label{lax-eq-phi}
\partial_x \vec{\phi} = L(p_0,q_0,\lambda) \vec{\phi}
\end{equation}
with $\lambda = e^{i\gamma/2}$. Two linearly independent solutions of
the linear equation (\ref{lax-eq-phi}) are constructed in Lemma \ref{firstbound} below.

Now, let $(p,q) \in C(\mathbb{R};H^2(\mathbb{R}))$ be the unique solution of the MTM
system (\ref{MTM}) such that $(p,q) |_{t = 0} = (p_0,q_0)$, which exists
according to Theorem \ref{theorem-Candy}. The time evolution of the vector function
$\vec{\phi}$ in $t$ for every $x \in \mathbb{R}$ is defined by the linear equation
\begin{equation}
\label{lax-eq-phi-time}
\partial_t\vec{\phi}=A(p,q,\lambda) \vec{\phi}
\end{equation}
for the same $\lambda = e^{i \gamma/2}$. Lemma \ref{Tevol} characterizes two linearly independent solutions
of the linear equation (\ref{lax-eq-phi-time}) for every $t \in \mathbb{R}$.

Lastly, Lemma \ref{lastbd} constructs a new solution $(u,v) \in C(\mathbb{R};H^2(\mathbb{R}))$
of the MTM system (\ref{MTM}) in a $L^2$-neighborhood of the one-soliton solution
from the auto--B\"{a}cklund transformation involving $(p,q)$ and $\vec{\phi}$ for every $t \in \mathbb{R}$.

Let us introduce the following unitary matrices
\begin{equation}\label{unitaryM}
M_1 = \left[\begin{matrix} m_1 & 0 \\ 0 & \overline{m}_1 \end{matrix}\right] \quad \quad
\mbox{and} \quad \quad M_2 = \left[\begin{matrix} \overline{m}_2 & 0 \\ 0 & m_2 \end{matrix}\right],
\end{equation}
where $m_1(x) := e^{\frac{i}{4}\int_{-\infty}^x(|p_0|^2-|q_0|^2)ds}$ and $m_2(x) : = e^{\frac{i}{4}\int_{x}^{\infty}(|p_0|^2-|q_0|^2)ds}$.
We make substitution
\begin{equation}\label{linearsol}
\vec{\phi}_1(x) = e^{-\frac{\sin\gamma}{2}x}M_1(x)\left[\begin{matrix} \varphi_1(x) \\ \varphi_2(x) \end{matrix}\right]
\quad\mbox{and}\quad
\vec{\phi}_2(x) = e^{\frac{\sin\gamma}{2}x}M_2(x)\left[\begin{matrix} \chi_1(x) \\ \chi_2(x) \end{matrix}\right],
\end{equation}
into the linear equation \eqref{lax-eq-phi} with $\lambda = e^{i\gamma/2}$ and obtain two boundary value problems:
\begin{eqnarray}
\label{phieq}
\left\{ \begin{array}{l}
\varphi_1' = \frac{i}{2}(e^{-i\gamma/2}\overline{p}_0-e^{i\gamma/2}\overline{q}_0)\overline{m}_1^2\varphi_2,\\
\varphi_2' = \frac{i}{2}(e^{-i\gamma/2}p_0-e^{i\gamma/2}q_0)m_1^2\varphi_1+\sin\gamma \varphi_2, \end{array} \right.
\end{eqnarray}
and
\begin{eqnarray} \label{chieq}
\left\{ \begin{array}{l}
\chi_1' = -\sin\gamma \chi_1+\frac{i}{2}(e^{-i\gamma/2}\overline{p}_0-e^{i\gamma/2}\overline{q}_0) m_2^2\chi_2, \\
\chi_2' = \frac{i}{2}(e^{-i\gamma/2}p_0 - e^{i\gamma/2}q_0) \overline{m}_2^2 \chi_1, \end{array} \right.
\end{eqnarray}
subject to the boundary conditions
\begin{equation}
\label{phi-chi-bc}
\left\{ \begin{array}{l} \lim\limits_{x\rightarrow -\infty}\varphi_1(x) = 1, \\
\lim\limits_{x\rightarrow \infty}e^{-x\sin\gamma }\varphi_2(x) = 0, \end{array} \right.  \quad\mbox{and}\quad
\left\{ \begin{array}{l} \lim\limits_{x\rightarrow -\infty}e^{x\sin\gamma}\chi_1(x) = 0, \\
\lim\limits_{x\rightarrow \infty}\chi_2(x) = 1.\end{array} \right.
\end{equation}
The following lemma characterizes solutions of the boundary value problems
(\ref{phieq}), (\ref{chieq}), and (\ref{phi-chi-bc}) for small $(p_0,q_0)$ in $L^2(\mathbb{R})$.

\begin{lem} \label{firstbound}
There exists a real positive $\delta$ such that if $\|p_0\|_{L^2}+\|q_0\|_{L^2}\leq \delta$,
then the boundary value problems \eqref{phieq}, (\ref{chieq}), and (\ref{phi-chi-bc}) have
unique solutions in the class
$$
(\varphi_1,\varphi_2)\in L^{\infty}(\mathbb{R}) \times (L^2(\mathbb{R}) \cap L^{\infty}(\mathbb{R})), \quad\mbox{and}\quad
(\chi_1,\chi_2)\in (L^2(\mathbb{R}) \cap L^{\infty}(\mathbb{R})) \times L^{\infty}(\mathbb{R}),
$$
satisfying bounds
\begin{equation}\label{ineqphi}
\|\varphi_1-1\|_{L^{\infty}}+\|\varphi_2\|_{L^2\cap L^{\infty}} \lesssim \|p_0\|_{L^2}+\|q_0\|_{L^2}
\end{equation}
and
\begin{equation}\label{ineqchi}
\|\chi_1\|_{L^2\cap L^{\infty}}+\|\chi_2-1\|_{L^{\infty}} \lesssim \|p_0\|_{L^2}+\|q_0\|_{L^2}.
\end{equation}
\end{lem}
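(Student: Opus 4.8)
The plan is to recast each of the two boundary value problems as an equivalent system of Volterra integral equations and to solve it by the contraction mapping principle in a small ball of $L^{\infty}(\mathbb{R})\times(L^2(\mathbb{R})\cap L^{\infty}(\mathbb{R}))$. Consider first \eqref{phieq} with the left pair of boundary conditions in \eqref{phi-chi-bc}. Since $|p_0|^2-|q_0|^2$ is real and integrable, the phase $m_1(x)=e^{\frac{i}{4}\int_{-\infty}^x(|p_0|^2-|q_0|^2)ds}$ is well defined with $|m_1(x)|=1$, so the scalar coefficients in \eqref{phieq} lie in $L^2(\mathbb{R})$ with norms $\lesssim\|p_0\|_{L^2}+\|q_0\|_{L^2}$. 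The linear part of the second equation is $\varphi_2'-(\sin\gamma)\varphi_2=(\cdots)$; integrating it from $+\infty$ (which is legitimate because any bounded $\varphi_2$ satisfies $e^{-x\sin\gamma}\varphi_2(x)\to0$) and integrating the first equation from $-\infty$ using $\varphi_1(-\infty)=1$, I obtain the equivalent system
\begin{equation}
\left\{ \begin{array}{l}
\displaystyle \varphi_1(x) = 1 + \frac{i}{2}\int_{-\infty}^{x}(e^{-i\gamma/2}\overline{p}_0-e^{i\gamma/2}\overline{q}_0)\,\overline{m}_1^2\,\varphi_2\,dy, \\[3mm]
\displaystyle \varphi_2(x) = -\frac{i}{2}\int_{x}^{\infty}e^{(x-y)\sin\gamma}(e^{-i\gamma/2}p_0-e^{i\gamma/2}q_0)\,m_1^2\,\varphi_1\,dy,
\end{array} \right.
\end{equation}
where in the integrands the coefficients are evaluated at $y$.

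Next I would view the right-hand side as a map $T$ on the affine ball $\{(\varphi_1,\varphi_2):\|\varphi_1-1\|_{L^{\infty}}+\|\varphi_2\|_{L^2\cap L^{\infty}}\le1\}$. As a function of $x-y$, the kernel $e^{(x-y)\sin\gamma}$ restricted to $\{y\ge x\}$ equals $e^{z\sin\gamma}\mathbf{1}_{\{z\le0\}}$, which belongs to $L^1(\mathbb{R})\cap L^{\infty}(\mathbb{R})$ with $L^1$-norm $1/\sin\gamma$ and $L^{\infty}$-norm $1$. Combined with $|m_1|=1$ this yields the bilinear estimates $\|\varphi_2\|_{L^2}\lesssim(\|p_0\|_{L^2}+\|q_0\|_{L^2})\|\varphi_1\|_{L^{\infty}}$ (Young's convolution inequality), $\|\varphi_2\|_{L^{\infty}}\lesssim(\|p_0\|_{L^2}+\|q_0\|_{L^2})\|\varphi_1\|_{L^{\infty}}$ (Cauchy--Schwarz in $y$), and $\|\varphi_1-1\|_{L^{\infty}}\lesssim(\|p_0\|_{L^2}+\|q_0\|_{L^2})\|\varphi_2\|_{L^2}$ (Cauchy--Schwarz in the first equation), all with $\gamma$-dependent but $(p_0,q_0)$-independent constants. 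Applying the same estimates to differences of two arguments shows that $T$ maps the ball into itself and is a contraction once $\|p_0\|_{L^2}+\|q_0\|_{L^2}$ is small enough, so for $\delta$ small the contraction mapping principle gives a unique fixed point in the ball. Conversely, any solution of \eqref{phieq} in the stated class satisfies this integral system (the boundary term at $+\infty$ vanishes since $\varphi_2$ is bounded) and, by a bootstrap through the same estimates, necessarily lies in the ball, so uniqueness holds in the whole class; continuity of $\varphi_1$ at $-\infty$ with limit $1$ follows from dominated convergence because the first integrand is in $L^1(\mathbb{R})$. Passing the fixed point back through the estimates produces \eqref{ineqphi}.

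The system \eqref{chieq} with the right pair of boundary conditions in \eqref{phi-chi-bc} is handled identically: the linear part of the $\chi_1$-equation is $\chi_1'+(\sin\gamma)\chi_1=(\cdots)$, so $\chi_1$ is obtained by integrating from $-\infty$ (legitimate since bounded $\chi_1$ forces $e^{x\sin\gamma}\chi_1(x)\to0$) against a kernel that is again, as a function of $x-y$, in $L^1(\mathbb{R})\cap L^{\infty}(\mathbb{R})$, while $\chi_2$ is obtained by integrating from $+\infty$ using $\chi_2(+\infty)=1$; the resulting integral operator has the same structure as above with $m_2$ ($|m_2|=1$) in place of $m_1$, so the same contraction argument gives a unique solution in $(L^2\cap L^{\infty})\times L^{\infty}$ satisfying \eqref{ineqchi}. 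I do not anticipate a genuine obstacle; the one point that needs real care rather than routine bookkeeping is the $L^2$-bound on $\varphi_2$ (and on $\chi_1$): because $p_0$ and $q_0$ lie only in $L^2$ and not in $L^1$, one cannot put the $L^2$-factor on the exponential kernel and must instead place the kernel itself in $L^1$ and invoke Young's inequality, which is exactly why $\sin\gamma>0$, i.e. $\gamma\in(0,\pi)$, together with the exponential weights built into the substitution \eqref{linearsol}, is essential.
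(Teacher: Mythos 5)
Your proposal is correct and follows essentially the same route as the paper: the authors also rewrite \eqref{phieq} and \eqref{phi-chi-bc} as the Volterra system with $\varphi_1$ integrated from $-\infty$ and $\varphi_2$ integrated from $+\infty$ against the kernel $e^{(x-y)\sin\gamma}\mathbf{1}_{\{y\ge x\}}$, and run a contraction argument in $Z=L^{\infty}\times(L^2\cap L^{\infty})$ using exactly the Cauchy--Schwarz and Young estimates you describe, with the smallness threshold $\delta<\sin\gamma$. Your added remarks on the equivalence of the integral and differential formulations and on uniqueness in the full class are slightly more careful than the paper's write-up, but they do not change the argument.
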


\begin{proof}
The boundary value problem \eqref{phieq} and (\ref{phi-chi-bc}) can be written in the integral form
\begin{equation}
\left\{ \begin{array}{l}
\varphi_1(x) = T_1(\varphi_1,\varphi_2)(x) := 1+\frac{i}{2}\int_{-\infty}^x
\left[e^{-i\gamma/2}\overline{p}_0(y) - e^{i\gamma/2}\overline{q}_0(y) \right] \overline{m}_1^2(y) \varphi_2(y) dy, \\
\varphi_2(x) = T_2(\varphi_1,\varphi_2)(x) := -\frac{i}{2}\int_{x}^{\infty}e^{(x-y)\sin\gamma}
\left[ e^{-i\gamma/2}p_0(y) - e^{i\gamma/2}q_0(y) \right] m_1^2(y) \varphi_1(y) dy. \end{array} \right.
\end{equation}
We introduce a Banach space $Z:= L^{\infty}(\mathbb{R}) \times (L^2(\mathbb{R}) \cap L^{\infty}(\mathbb{R}))$ equipped with the norm
$$
\| \vec{u} \|_{Z}:=\| u_1\|_{L^{\infty}}+\|u_2\|_{L^{\infty}\cap L^2}
$$
and show that $\vec{T} =(T_1,T_2)^t: Z\rightarrow Z$ is a contraction mapping.
Using the Schwartz inequality, the Younge inequality, and the triangle inequality,
we obtain for any $\vec{\varphi}, \widetilde{\vec{\varphi}} \in Z$,
\begin{eqnarray*}
\|T_1(\varphi_1,\varphi_2)-T_1(\widetilde{\varphi}_1,\widetilde{\varphi}_2)\|_{L^{\infty}}
&=&\sup_{x\in \mathbb{R}} \left| \frac{i}{2}\int_{-\infty}^x\left[e^{-i\gamma/2}\overline{p}_0(y)
-e^{i\gamma/2}\overline{q}_0(y) \right]\overline{m}_1^2(y)(\varphi_2(y)-\widetilde{\varphi}_2(y))dy \right|\\
&\leq& \frac{1}{2}\left( \|p_0 \|_{L^2}+ \|q_0 \|_{L^2}\right)\|\varphi_2-\widetilde{\varphi}_2 \|_{L^2}
\end{eqnarray*}
and
\begin{eqnarray*}
\|T_2(\varphi_1,\varphi_2)-T_2(\widetilde{\varphi}_1,\widetilde{\varphi}_2)\|_{L^{\infty}\cap L^2} &\leq&
\frac{1}{2} \|e^{x \sin\gamma} \|_{L^1_x(\mathbb{R}_-)\cap L^2_x(\mathbb{R}_-)}\|e^{-i\gamma/2}p_0-e^{i\gamma/2}q_0 \|_{L^2}\|\varphi_1-\widetilde{\varphi}_1\|_{L^{\infty}}\\
&\leq& \frac{1}{\sin\gamma} \left( \|p_0 \|_{L^2}+ \|q_0 \|_{L^2}\right)\|\varphi_1-\widetilde{\varphi}_1\|_{L^{\infty}}.
\end{eqnarray*}
If $\|p_0 \|_{L^2}+ \|q_0 \|_{L^2} \leq \delta$ is sufficiently small such that $\delta <\sin\gamma$ for a fixed $\gamma \in (0,\pi)$,
then $\vec{T} = (T_1,T_2)^t$ is a contraction mapping on $Z$.
To prove the inequality \eqref{ineqphi}, we have
\begin{eqnarray*}
\|\varphi_1-1\|_{L^{\infty}}+\|\varphi_2\|_{L^2\cap L^{\infty}} &\leq& \| \vec{T}(\varphi_1,\varphi_2)-\vec{T}(0,0)\|_Z \\
&\leq& \frac{\|p_0 \|_{L^2}+ \|q_0 \|_{L^2}}{\sin \gamma} (1+\|\varphi_1-1\|_{L^{\infty}}+\|\varphi_2\|_{L^{\infty}\cap L^2}).
\end{eqnarray*}
Since $\|p_0 \|_{L^2}+ \|q_0 \|_{L^2} \leq \delta < \sin \gamma$, the above estimates yields
the inequality \eqref{ineqphi}. Repeating similar estimates, we can prove that
$(\chi_1,\chi_2)\in (L^2(\mathbb{R}) \cap L^{\infty}(\mathbb{R})) \times L^{\infty}(\mathbb{R})$
and the inequality \eqref{ineqchi} holds.
\end{proof}

Let us now define the time evolution of the vector functions $\vec{\phi}_1$ and $\vec{\phi}_2$
in $t$ for every $x \in \mathbb{R}$, according to the linear equation \eqref{lax-eq-phi-time},
where $\lambda = e^{i \gamma/2}$ and $(p,q)$ are solutions of the MTM system (\ref{MTM})
such that $(p,q) |_{t = 0} = (p_0,q_0)$.
We also consider the initial data for $\vec{\phi}_1$ and $\vec{\phi}_2$ at $t = 0$
given by the two linearly independent solutions \eqref{linearsol} of the linear equation \eqref{lax-eq-phi}.
The linear equation (\ref{lax-eq-phi-time}) for $\vec{\phi}_1$ and $\vec{\phi}_2$ with
$\lambda = e^{i \gamma/2}$ take the form
\begin{equation} \label{phieqt}
\partial_t \vec{\phi}_{1,2} =
\left[ \begin{array}{cc} -\frac{i}{4}(|p|^2+|q|^2)+\frac{i}{2}\cos\gamma &
-\frac{i}{2}(e^{-i\gamma/2}\overline{p}+e^{i\gamma/2}\overline{q}) \\
-\frac{i}{2}(e^{-i\gamma/2}p+e^{i\gamma/2}q) &
\frac{i}{4}(|p|^2+|q|^2)-\frac{i}{2}\cos\gamma \end{array} \right]
\vec{\phi}_{1,2}.
\end{equation}
We set
\begin{equation}
\label{phi-1-2}
\vec{\phi}_1(x,t) := e^{-\frac{x}{2} \sin\gamma} M_1(x,t) \vec{\varphi}(x,t), \quad
\vec{\phi}_2(x,t) := e^{\frac{x}{2} \sin\gamma} M_2(x,t) \vec{\chi}(x,t),
\end{equation}
where $M_1(x,t)$ and $M_2(x,t)$ are given by (\ref{unitaryM}) with
\begin{equation}
\label{m1-m2}
m_1(x,t):= e^{\frac{i}{4}\int_{-\infty}^x(|p(s,t)|^2-|q(s,t)|^2)ds}, \quad
m_2(x,t):= e^{\frac{i}{4}\int_{x}^{\infty} (|p(s,t)|^2-|q(s,t)|^2)ds}.
\end{equation}
The following lemma characterizes vector functions $\vec{\varphi}$ and $\vec{\chi}$.

\begin{lem} \label{Tevol}
Let $(p_0,q_0) \in H^2(\mathbb{R})$ and assume that
there exists a sufficiently small $\delta$ such that
$\|p_0\|_{L^2}+\|q_0\|_{L^2}\leq \delta$. Let $(p,q)\in C(\mathbb{R};H^2(\mathbb{R}))$ be
the unique solution of the MTM system (\ref{MTM}) such that $(p,q) |_{t = 0} = (p_0,q_0)$.
Let $\vec{\phi}_1$ and $\vec{\phi}_2$
be solutions of the linear equation \eqref{phieqt}
starting with the initial data given by solutions of the boundary value problems \eqref{phieq},
\eqref{chieq}, and (\ref{phi-chi-bc}). Then, for every $t\in \mathbb{R}$,
$\vec{\phi}_1$ and $\vec{\phi}_2$ are given by (\ref{phi-1-2}), where
$$
(\varphi_1,\varphi_2)(\cdot,t) \in L^{\infty}(\mathbb{R}) \times (L^2(\mathbb{R}) \cap L^{\infty}(\mathbb{R})) \quad
\mbox{\rm and} \quad
(\chi_1,\chi_2)(\cdot,t) \in (L^2(\mathbb{R}) \cap L^{\infty}(\mathbb{R})) \times L^{\infty}(\mathbb{R})
$$
satisfy the differential equations
\begin{equation}\label{phieq2}
\partial_x \vec{\varphi} =
\left[\begin{matrix} 0 & \frac{i}{2}(e^{-i\gamma/2}\overline{p}-e^{i\gamma/2}\overline{q}) \bar{m}_1^2\\
\frac{i}{2}(e^{-i\gamma/2}p - e^{i\gamma/2}q) m_1^2 & \sin\gamma \end{matrix}\right] \vec{\varphi}
\end{equation}
and
\begin{equation}\label{chieq2}
\partial_x \vec{\chi} =
\left[\begin{matrix} -\sin\gamma & \frac{i}{2}(e^{-i\gamma/2}\overline{p}-e^{i\gamma/2}\overline{q}) m_2^2 \\
\frac{i}{2}(e^{-i\gamma/2}p -e^{i\gamma/2}q) \bar{m}_2^2 & 0 \end{matrix}\right] \vec{\chi},
\end{equation}
subject to the boundary values
\begin{equation}
\label{varphi-bc-chi}
\left\{ \begin{array}{l}
\lim\limits_{x\rightarrow -\infty} \varphi_1(x,t) = e^{i t\frac{\cos\gamma}{2}},\\
\lim\limits_{x\rightarrow \infty}e^{-x\sin\gamma} \varphi_2(x,t) = 0, \end{array} \right. \quad \mbox{\rm and} \quad
\left\{ \begin{array}{l}
\lim\limits_{x\rightarrow -\infty} e^{x\sin\gamma}\chi_1(x,t) = 0,\\
\lim\limits_{x\rightarrow \infty} \chi_2(x,t) = e^{-it\frac{\cos\gamma}{2}}.\end{array} \right.
\end{equation}
Furthermore, for every $t\in \mathbb{R}$, these functions satisfy the bounds
\begin{equation}\label{bdphi}
\|\varphi_1(\cdot,t) - e^{i t \frac{\cos\gamma}{2}}\|_{L^{\infty}}+
\|\varphi_2(\cdot,t) \|_{L^2\cap L^{\infty}} \lesssim \|p_0\|_{L^2}+\|q_0\|_{L^2}
\end{equation}
and
\begin{equation}\label{bdchi}
\|\chi_1(\cdot,t) \|_{L^2\cap L^{\infty}} + \|\chi_2(\cdot,t) -
e^{- i t \frac{\cos\gamma}{2}}\|_{L^{\infty}}\lesssim \|p_0\|_{L^2}+\|q_0\|_{L^2}.
\end{equation}
\end{lem}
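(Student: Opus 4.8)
The plan is to combine the zero-curvature structure of the Lax pair with the $L^2$-conservation of Theorem~\ref{theorem-Candy} and the contraction argument already used in Lemma~\ref{firstbound}. First I would show that $\vec{\phi}_1,\vec{\phi}_2$, which are defined as solutions of the temporal equation \eqref{lax-eq-phi-time} (equivalently \eqref{phieqt}) with initial data at $t=0$ given by the solutions \eqref{linearsol} of the spatial boundary value problems \eqref{phieq}, \eqref{chieq}, \eqref{phi-chi-bc}, automatically solve the spatial Lax equation $\partial_x\vec{\phi}_{1,2}=L(p,q,\lambda)\vec{\phi}_{1,2}$ for \emph{every} $t\in\mathbb{R}$. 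Setting $\vec{g}_{1,2}:=\partial_x\vec{\phi}_{1,2}-L(p,q,\lambda)\vec{\phi}_{1,2}$ and using that $(p,q)$ solves (\ref{MTM}), which is precisely the compatibility condition $L_t-A_x+[L,A]=0$ for \eqref{lablax1}--\eqref{lablax2}, a direct computation gives $\partial_t\vec{g}_{1,2}=A(p,q,\lambda)\vec{g}_{1,2}$; since $\vec{g}_{1,2}|_{t=0}=0$ and the coefficients are regular enough (here $(p,q)\in C(\mathbb{R};H^2)$ and, from (\ref{MTM}), $\partial_t(p,q)\in C(\mathbb{R};H^1)$, so $H^2\hookrightarrow C^1$ makes $L,A$ Lipschitz in $x$ and $C^1$ in $t$), uniqueness for this linear ODE forces $\vec{g}_{1,2}\equiv0$. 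Substituting the ansatz \eqref{phi-1-2} into $\partial_x\vec{\phi}_{1,2}=L(p,q,\lambda)\vec{\phi}_{1,2}$ then yields \eqref{phieq2} and \eqref{chieq2}: the gauge factors $M_1(x,t),M_2(x,t)$ from \eqref{m1-m2} remove the diagonal term $\frac{i}{4}(|p|^2-|q|^2)\sigma_3$, and the exponentials $e^{\mp\frac{x}{2}\sin\gamma}$ absorb the constant diagonal part $\frac{i}{4}(\lambda^2-\lambda^{-2})\sigma_3=-\frac{\sin\gamma}{2}\sigma_3$.

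Next I would recover the boundary conditions \eqref{varphi-bc-chi}. The decay-type conditions $e^{-x\sin\gamma}\varphi_2(x,t)\to0$ as $x\to+\infty$ and $e^{x\sin\gamma}\chi_1(x,t)\to0$ as $x\to-\infty$ follow from a pointwise-in-$x$ Gronwall estimate on \eqref{phieqt}: since $(p,q)(\cdot,t)\in H^2\hookrightarrow L^\infty$ with $\sup_{s\in[0,t]}(\|p(s)\|_{H^2}+\|q(s)\|_{H^2})<\infty$, the coefficient matrix in \eqref{phieqt} is bounded in $x$ uniformly on compact time intervals, whence $|\vec{\phi}_1(x,t)|\le|\vec{\phi}_1(x,0)|\exp\!\big(\int_0^t\|A(p,q,\lambda)(s)\|_{L^\infty}\,ds\big)$, and letting $x\to+\infty$ gives $\vec{\phi}_1(x,t)\to0$ there (symmetrically $\vec{\phi}_2(x,t)\to0$ as $x\to-\infty$), which in the variables of \eqref{phi-1-2} is exactly the stated decay. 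The normalizations $\varphi_1(x,t)\to e^{it\cos\gamma/2}$ as $x\to-\infty$ and $\chi_2(x,t)\to e^{-it\cos\gamma/2}$ as $x\to+\infty$ are obtained by examining \eqref{phieqt} in the limit $x\to\mp\infty$, where $p,q\to0$ and $M_{1,2}\to\mathrm{const}$, so the system degenerates to the diagonal one $\partial_t\vec{\phi}=\tfrac{i}{2}\cos\gamma\,\sigma_3\vec{\phi}$; integrating in $t$ from the $t=0$ data $\varphi_1\to1$, $\chi_2\to1$ of Lemma~\ref{firstbound} produces the phase factors $e^{\pm it\cos\gamma/2}$. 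Making this limit interchange rigorous uses that $t\mapsto(p,q)(t)$ is continuous into $L^2$, so the spatial tails of $(p,q)$ are small uniformly for $t$ in a compact interval.

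Finally, with \eqref{phieq2}, \eqref{chieq2} and the boundary conditions \eqref{varphi-bc-chi} in hand, the conclusion follows by re-running the argument of Lemma~\ref{firstbound} at each time slice. The crucial point is that the $L^2$-conservation \eqref{l2-conservation} gives $\|p(\cdot,t)\|_{L^2}^2+\|q(\cdot,t)\|_{L^2}^2=\|p_0\|_{L^2}^2+\|q_0\|_{L^2}^2\le\delta^2$ for all $t$, so after choosing $\delta$ small enough that $\sqrt{2}\,\delta<\sin\gamma$, the integral operators associated with \eqref{phieq2}, \eqref{chieq2}, \eqref{varphi-bc-chi} --- which differ from $\vec{T}$ in Lemma~\ref{firstbound} only in that the normalization constant $1$ is replaced by $e^{\pm it\cos\gamma/2}$ of modulus one --- are contractions on the same Banach space $Z$, with contraction constant and fixed-point bounds depending only on $\gamma$ and $\delta$, hence uniform in $t$. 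This yields the stated function classes and the bounds \eqref{bdphi}, \eqref{bdchi}; uniqueness of the fixed point, together with the fact established in the first two paragraphs that the time-evolved $\vec{\phi}_{1,2}$ solve precisely this ODE--boundary value problem, identifies them with the constructed solutions. I expect the delicate part to be the second paragraph: propagating the boundary behavior from $t=0$ to all $t$, in particular ruling out that the temporal flow excites the exponentially growing mode of the spatial systems \eqref{phieq2}, \eqref{chieq2} and justifying that the limits $x\to\pm\infty$ commute with the time evolution; the rest is an adaptation of Lemma~\ref{firstbound} together with Theorem~\ref{theorem-Candy}.
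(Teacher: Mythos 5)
Your proposal is correct and follows essentially the same route as the paper: the residual $\vec{F}=\partial_x\vec{\phi}-L\vec{\phi}$ propagated by $\partial_t\vec{F}=A\vec{F}$ together with the compatibility condition, Gronwall estimates in $t$ for the $L^{\infty}$ control, a limit-interchange argument for the boundary values as $x\to\mp\infty$ (the paper implements this via Duhamel's formula and dominated convergence), and the contraction argument of Lemma \ref{firstbound} combined with the $L^2$-conservation law for the uniform-in-$t$ bounds. The only cosmetic difference is that the paper establishes $\varphi_2(\cdot,t)\in L^2(\mathbb{R})$ by a separate Gronwall estimate before identifying $\vec{\varphi}(\cdot,t)$ with the unique fixed point, whereas you recover this membership a posteriori from the integral equation once $L^{\infty}$ boundedness is known; both orderings work.
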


\begin{proof}
The unique solution $(p,q)$ of the MTM system (\ref{MTM}) exists in $C(\mathbb{R};H^2(\mathbb{R}))$
by Theorem \ref{theorem-Candy}. By Sobolev embedding of $H^2(\mathbb{R})$ into $C^1(\mathbb{R})$,
the $x$-derivatives of solutions $(p,q)$ are continuous and bounded functions of $x$ for every $t \in \mathbb{R}$.
Moreover, bootstrapping arguments for the MTM system (\ref{MTM}) show that the same solution
$(p,q)$ exists in $C^1(\mathbb{R};H^1(\mathbb{R}))$. Therefore, the $t$-derivatives of solutions
$(p,q)$ are also continuous and bounded functions of $x$ for every $t \in \mathbb{R}$.
The technical assumption $(p_0,q_0) \in H^2(\mathbb{R})$ simplifies working with the system of Lax equations. In particular,
we shall prove that $\vec{\varphi}$ satisfies the differential equation \eqref{phieq2} for every $t\in \mathbb{R}$
if $\vec{\phi}_1$ satisfies the differential equation \eqref{phieqt} for every $x \in \mathbb{R}$.

By Lemma \ref{firstbound}, $\vec{\varphi}$ is a bounded function of $x$ for $t = 0$
and by bootstrapping arguments, $\vec{\varphi} \in C(\mathbb{R})$ for $t = 0$. We now claim that
the differential equation \eqref{phieqt} preserves this property for every $t \in \mathbb{R}$.
From the differential equation \eqref{phieqt} and the representation (\ref{phi-1-2}), we obtain
\begin{eqnarray*}
\partial_t (|\varphi_1|^2+|\varphi_2|^2) & = &
\sin\left(\frac{\gamma}{2}\right) \left[ (\bar{q}-\bar{p}) \bar{m}_1^2 \bar{\varphi}_1 \varphi_2 +
(q-p) m_1^2 \varphi_1 \bar{\varphi}_2\right] \\
&\leq& (|p| + |q|) (|\varphi_1|^2+|\varphi_2|^2).
\end{eqnarray*}
By Gronwall's inequality, for any $T > 0$, we obtain
\begin{equation}\label{Gineq1}
|\varphi_1(x,t)|^2+|\varphi_2(x,t)|^2 \leq e^{\alpha_T T}(|\varphi_1(x,0)|^2+|\varphi_2(x,0)|^2) \quad
x \in \mathbb{R}, \quad t\in [-T,T],
\end{equation}
where
$$
\alpha_T := \sup_{t\in [-T,T]} \sup_{x\in \mathbb{R}} \left( |p(x,t)|+|q(x,t)| \right).
$$
Since the exponential factor remains bounded for any finite time $T>0$,
then it follows that $\vec{\varphi}(\cdot,t) \in L^{\infty}(\mathbb{R})$ for every $t\in \mathbb{R}$.
Bootstrapping then yields $\vec{\varphi}(\cdot,t) \in C(\mathbb{R})$
for every $t \in \mathbb{R}$.

Since coefficients of the linear system (\ref{phieqt}) are continuous functions of $(x,t)$,
we have $\partial_t \vec{\varphi}(\cdot,t) \in C(\mathbb{R})$ for every $t \in \mathbb{R}$.
Now, if $(p,q)$ are $C^1$ functions of $x$ and $t$, then a similar method shows that
$\partial_x \vec{\varphi}, \partial_t \partial_x \vec{\varphi}, \partial^2_t \vec{\varphi} \in C(\mathbb{R})$
for every $t \in \mathbb{R}$.

We shall now establish validity of the differential equation
equation (\ref{phieq2}), which we write in the abstract form as $\partial_x \vec{\phi}_1 = L \vec{\phi}_1$.
We also write the differential equation (\ref{phieqt}) for $\vec{\phi}_1$
in the abstract form as $\partial_t \vec{\phi}_1 = A \vec{\phi}_1$.
To establish (\ref{phieq2}), we construct the residual function
$\vec{F} := \partial_x \vec{\phi}_1 - L \vec{\phi}_1$. This function
is zero for every $x \in \mathbb{R}$ and $t = 0$. We shall prove that
$\vec{F}$ is zero for every $x \in \mathbb{R}$ and $t\in \mathbb{R}$.

The compatibility condition $\partial_x A -\partial_t L + [A,L]=0$
is satisfied for every $x \in \mathbb{R}$ and $t \in \mathbb{R}$,
if $(p,q)$ is a $C^1$ solution of the MTM system (\ref{MTM}).
After differentiating $\vec{F}$ with respect to $t$, we obtain
\begin{eqnarray*}
\partial_t\vec{F} & = & \partial_t\partial_x \vec{\phi}_1 -(\partial_t L) \vec{\phi}_1
- L \partial_t \vec{\phi}_1 \\
& = & \partial_x(A \vec{\phi}_1 ) - (\partial_t L) \vec{\phi}_1 - L A \vec{\phi}_1 \\
& = & (\partial_x A-\partial_t L + [A,L]) \vec{\phi}_1  + A\vec{F} \\
& = & A \vec{F}.
\end{eqnarray*}
Let $\vec{F}=(F_1,F_2)^t$. From the linear evolution $\partial_t\vec{F}= A F$, we again obtain
\begin{eqnarray*}
\partial_t (|F_1|^2+|F_2|^2) & = & \sin\left(\frac{\gamma}{2}\right)
\left[ (\bar{q}-\bar{p}) \bar{F}_1 F_2 + (q-p) F_1 \bar{F}_2\right] \\
&\leq& (|p|+|q|)(|F_1|^2+|F_2|^2),
\end{eqnarray*}
which yields with Gronwall's inequality for any $T>0$
$$
|F_1(x,t)|^2+|F_2(x,t)|^2 \leq e^{\alpha_T T}
(|F_1(x,0)|^2+|F_2(x,0)|^2), \quad x \in \mathbb{R}, \quad t\in [-T,T],
$$
with the same definition of $\alpha_T$.
Since $\vec{F}(x,0)=\vec{0}$, then the above inequality yields $\vec{F}(x,t) = \vec{0}$
for every $x \in \mathbb{R}$ and $t \in [-T,T]$. Hence,
$\vec{\varphi}$ satisfies the differential equation \eqref{phieq2}.

We have shown that $\vec{\varphi}(\cdot,t) \in L^{\infty}(\mathbb{R})$ for every $t \in \mathbb{R}$.
We now show that $\varphi_2(\cdot,t) \in L^2(\mathbb{R})$ for every $t\in \mathbb{R}$.
It follows from the differential equation \eqref{phieqt} and the representation (\ref{phi-1-2}) that
\begin{eqnarray*}
\partial_t(|\varphi_2 |^2) & \leq & (|p|+|q|) |\bar{\varphi}_1 \varphi_2|\\
&\lesssim & |\varphi_2|^2+(|p|^2+|q|^2)|\varphi_1|^2.
\end{eqnarray*}
Using Gronwall's inequality and the previous bound \eqref{Gineq1}, we have
for any $T>0$
\begin{eqnarray*}
|\varphi_2(x,t)|^2&\leq& e^{T}\left[|\varphi_2(x,0)|^2 +
\int_{-T}^{T}(|p(x,s)|^2+|q(x,s)|^2)|\varphi_1(x,s)|^2ds\right]\\
&\leq& e^T |\varphi_2(x,0)|^2 + e^{(1+\alpha_T) T}
\int_{-T}^{T}(|p(x,s)|^2+|q(x,s)|^2) \left( |\varphi_1(x,0)|^2 + |\varphi_2(x,0)|^2 \right) ds,
\end{eqnarray*}
where $x \in \mathbb{R}$ and $t \in [-T,T]$. Therefore, we have
$$
\|\varphi_2(\cdot,t)\|_{L^2}^2 \leq e^T \|\varphi_2(\cdot,0)\|_{L^2}^2 + e^{(1+\alpha_T) T}
\left( \|\varphi_1(\cdot,0)\|_{L^{\infty}}^2
+ \| \varphi_2(\cdot,0)\|_{L^{\infty}}^2 \right) \int_{-T}^{T}(\|p(\cdot,s)\|_{L^2}^2+\|q(\cdot,s)\|_{L^2}^2)ds .
$$
Since the right-hand side of this inequality remains bounded for any
finite time $T>0$, then it follows that $\varphi_2(\cdot,t) \in L^2(\mathbb{R})$ for every $t\in \mathbb{R}$.

It remains to prove the boundary values for $\vec{\varphi}_1(x,t)$ as $x \to \pm \infty$
in \eqref{varphi-bc-chi}. The second boundary condition
$$
\lim_{x\rightarrow \infty}e^{-x\sin\gamma}\varphi_2(x,t)=0
$$
follows from the fact
that $\varphi_2(\cdot,t)\in L^{\infty}(\mathbb{R})$ for every $t\in \mathbb{R}$.
To prove the first boundary condition, we use Duhamel's formula to write
the differential equation (\ref{phieqt}) in the integral form:
$$
\vec{\phi}_1(x,t) = e^{i t \sigma_3 \frac{\cos\gamma}{2}} \vec{\phi}_1(x,0) +
\int_0^t e^{i (t-s)\sigma_3\frac{\cos\gamma}{2}} A_1(x,s) \vec{\phi}_1(x,s)ds,
$$
where
$$
A_1(x,t) := \left[ \begin{array}{cc} -\frac{i}{4}(|p|^2+|q|^2)  &
-\frac{i}{2}(e^{-i\gamma/2}\overline{p}+e^{i\gamma/2}\overline{q}) \\
-\frac{i}{2}(e^{-i\gamma/2}p+e^{i\gamma/2}q) & \frac{i}{4}(|p|^2+|q|^2) \end{array} \right].
$$
Using the representation (\ref{phi-1-2}), we have for $t\in \mathbb{R}$
$$
|M_1 \vec{\varphi}(x,t)-e^{i t \sigma_3 \frac{\cos\gamma}{2}} M_1 \vec{\varphi}(x,0)| \leq
\int_0^{|t|}|A_1(x,s) M_1 \vec{\varphi}(x,s)|ds,
$$
where $|\vec{f}|$ denotes the vector norm of the $2$-vector $\vec{f}$.
Since $\vec{\varphi}(\cdot,t)\in L^{\infty}(\mathbb{R}) \times (L^{\infty}(\mathbb{R}) \cap L^2(\mathbb{R}))$
for every $t\in \mathbb{R}$ and $p(\cdot,t),q(\cdot,t)\in H^2(\mathbb{R})$,
we claim that
\begin{itemize}
\item $|A_1(x,s) M_1 \vec{\varphi}(x,s)|$ is bounded by some $s$-independent constant for
every $x\in \mathbb{R}$ and $|s| \leq |t|$

\item $\lim_{|x|\rightarrow \infty} A_1(x,s) M_1 \vec{\varphi}(x,s) =\vec{0}$ pointwise for every $|s| \leq |t|$.
\end{itemize}
Then, the dominated convergence theorem gives
$$
\lim_{x\rightarrow -\infty} |M_1(x,t) \vec{\varphi}(x,t) - e^{i t \sigma_3 \frac{\cos\gamma}{2} }
M_1(x,0) \vec{\varphi}(x,0)| = 0,
\quad t \in \mathbb{R}.
$$
Since $\vec{\varphi}(x,0) \rightarrow (1,0)^t$ as $x\rightarrow -\infty$ and
$M_1(x,t) \to I$ as $x \rightarrow -\infty$ for every $t \in \mathbb{R}$,
the above limit recovers the first boundary condition
$$
\lim_{x\rightarrow -\infty}\varphi_1(x,t) = e^{i t \sigma_3 \frac{\cos\gamma}{2}}.
$$

The proof of the differential equation (\ref{chieq2}) and the boundary condition
for $\vec{\chi}$ in \eqref{varphi-bc-chi} is analogous. Finally, since the
$L^2$ norm of solutions of the MTM system (\ref{MTM}) is constant, according to
the conserved law \eqref{l2-conservation}, the proof of bounds \eqref{bdphi} and
\eqref{bdchi} is analogous to the proof in Lemma \ref{firstbound}.
\end{proof}

\begin{lem} \label{lastbd}
Let $(p_0,q_0) \in H^2(\mathbb{R})$ and assume that there exists a sufficiently small $\delta$ such that
$\|p_0\|_{L^2}+\|q_0\|_{L^2}\leq \delta$. Let $(p,q) \in C(\mathbb{R};H^2(\mathbb{R}))$ be
the unique solution of the MTM system (\ref{MTM}) such that $(p,q) |_{t = 0} = (p_0,q_0)$. Using
solutions $\vec{\varphi}$ and $\vec{\chi}$ in Lemma \ref{Tevol}, let us define
\begin{equation}\label{sol1}
\left[ \begin{array}{l} \phi_1(x,t) \\ \phi_2(x,t) \end{array} \right] :=
c_1(t) e^{-\frac{x}{2} \sin\gamma}  M_1(x,t) \vec{\varphi}(x,t) + c_2(t) e^{\frac{x}{2} \sin\gamma} M_2(x,t) \vec{\chi}(x,t),
\end{equation}
where $c_1(t) := e^{(a+i\theta)/2}$, $c_2(t) := e^{-(a+i\theta)/2}$ are given in terms 
of the real coefficients $a, \theta$, which may depend on $t$. Then, the auto--B\"{a}cklund transformation
\begin{equation}\label{d1u}
u := - p \frac{e^{-i\gamma/2}|\phi_1|^2+e^{i\gamma/2}|\phi_2|^2}{e^{i\gamma/2}|\phi_1|^2+e^{-i\gamma/2}|\phi_2|^2}+
\frac{2i \sin\gamma \overline{\phi}_1\phi_2}{e^{i\gamma/2}|\phi_1|^2+e^{-i\gamma/2}|\phi_2|^2}
\end{equation}
and
\begin{equation} \label{d1v}
v := -q \frac{e^{i\gamma/2}|\phi_1|^2+e^{-i\gamma/2}|\phi_2|^2}{e^{-i\gamma/2}|\phi_1|^2+e^{i\gamma/2}|\phi_2|^2}-
\frac{2i \sin\gamma \overline{\phi}_1\phi_2}{e^{-i\gamma/2}|\phi_1|^2+e^{i\gamma/2}|\phi_2|^2}
\end{equation}
generates a new solution $(u,v) \in C(\mathbb{R};H^2(\mathbb{R}))$ of the MTM system (\ref{MTM}) satisfying the bound
\begin{equation}\label{ineqU}
\sup_{t\in\mathbb{R}} \left\| u(x,t) - ie^{-i\theta-it\cos\gamma}\sin\gamma \sech \left( x \sin \gamma - i\frac{\gamma}{2}-a \right)
\right\|_{L^2_x} \lesssim \|p_0 \|_{L^2}+ \|q_0 \|_{L^2}
\end{equation}
and
\begin{equation}\label{ineqV}
\sup_{t\in\mathbb{R}} \left\| v(x,t) + ie^{-i\theta-it\cos\gamma}\sin\gamma \sech \left( x \sin \gamma + i\frac{\gamma}{2}-a \right)
\right\|_{L^2_x} \lesssim \|p_0 \|_{L^2}+ \|q_0 \|_{L^2}.
\end{equation}
\end{lem}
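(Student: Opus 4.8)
The plan is to recognize the right-hand sides of \eqref{ineqU}--\eqref{ineqV} as the auto--B\"acklund transform of the zero solution, and then to prove that the map $\mathcal{B}$ is Lipschitz in the $L^2$ topology under a joint perturbation of the potential away from $(0,0)$ and of the Lax eigenfunction away from its zero-potential value. Concretely, setting $(p,q) = (0,0)$ forces $m_1 = m_2 \equiv 1$, $\vec{\varphi} = (e^{it\cos\gamma/2},0)^t$, $\vec{\chi} = (0,e^{-it\cos\gamma/2})^t$, so that \eqref{sol1} reduces to $\vec{\phi}^{\,0}(x,t) = c_1 e^{-\frac{x}{2}\sin\gamma + \frac{it}{2}\cos\gamma}(1,0)^t + c_2 e^{\frac{x}{2}\sin\gamma - \frac{it}{2}\cos\gamma}(0,1)^t$, which is the Lax solution \eqref{phi-sol} with $\lambda = e^{i\gamma/2}$ (up to the scalars $c_1,c_2$); substituting $\vec{\phi}^{\,0}$ into \eqref{d1u}--\eqref{d1v} with $p = q = 0$, and using $|c_1|^2 = e^a$, $|c_2|^2 = e^{-a}$, $\bar{c}_1 c_2 = e^{-i\theta}$, one recovers exactly the functions $ie^{-i\theta-it\cos\gamma}\sin\gamma\sech(x\sin\gamma - i\gamma/2 - a)$ and $-ie^{-i\theta-it\cos\gamma}\sin\gamma\sech(x\sin\gamma + i\gamma/2 - a)$ appearing in \eqref{ineqU}--\eqref{ineqV}. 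Thus the statement is a Lipschitz-stability estimate for $\mathcal{B}$ around this configuration, uniform in the parameters $a,\theta$ and in $t$.

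First I would verify that $\vec{\phi} = (\phi_1,\phi_2)^t$ in \eqref{sol1}, with $c_1,c_2 = e^{\pm(a+i\theta)/2}$ taken independent of $t$ (the oscillation $e^{\pm it\cos\gamma/2}$ is already contained in $\vec{\varphi},\vec{\chi}$), is a $C^2$, nowhere vanishing solution of the Lax system \eqref{laxeq} for the potential $(p,q)$ and $\lambda = e^{i\gamma/2}$: this is immediate from Lemma \ref{firstbound} (the $x$-equation at $t = 0$) together with Lemma \ref{Tevol} (the $t$-evolution, which also propagates the $x$-equation to all times via the residual-function argument), while $C^2$-regularity follows from $(p,q)\in C(\mathbb{R};H^2)\cap C^1(\mathbb{R};H^1)$. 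For $\delta$ small, bounds \eqref{bdphi}--\eqref{bdchi} give $|\varphi_1|,|\chi_2| \geq \tfrac12$ everywhere, hence $|c_1 e^{-\frac{x}{2}\sin\gamma}m_1\varphi_1|^2 + |c_2 e^{\frac{x}{2}\sin\gamma}m_2\chi_2|^2 \gtrsim e^{a - x\sin\gamma} + e^{-a + x\sin\gamma} \geq 2$, while the remaining (cross) contributions to $|\phi_1|^2 + |\phi_2|^2$ are $O(\delta)$ because $|c_1 c_2| = 1$ and $\varphi_2,\chi_1$ are $O(\delta)$; since $\gamma\in(0,\pi)$ implies $|e^{i\gamma/2} a + e^{-i\gamma/2} b| \gtrsim_{\gamma} a + b$ for $a,b\geq 0$, both denominators in \eqref{d1u}--\eqref{d1v} are bounded below uniformly in $x$, $t$, and $a$. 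Proposition \ref{backlund} then yields a $C^1$ solution $(u,v)$ of \eqref{MTM}; to upgrade it to $C(\mathbb{R};H^2)$ I would show $(u,v)|_{t=0}\in H^2(\mathbb{R})$ by differentiating \eqref{d1u}--\eqref{d1v} and bootstrapping $\vec{\varphi},\vec{\chi}\in H^2$ from \eqref{phieq}--\eqref{chieq} (using $(p_0,q_0)\in H^2\hookrightarrow C^1$, as in Corollary \ref{bound3}), and then invoke Theorem \ref{theorem-Candy} together with local uniqueness of classical solutions to identify $(u,v)$ with the global $H^2$-solution.

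The core of the argument — and the main obstacle — is the pair of $L^2$-bounds \eqref{ineqU}--\eqref{ineqV}. The difficulty is that $\vec{\phi}$ and $\vec{\phi}^{\,0}$ are \emph{not} close in any exponentially weighted sense: $\varphi_1 - e^{it\cos\gamma/2}$, $\chi_2 - e^{-it\cos\gamma/2}$, $m_1 - 1$, $m_2 - 1$ are only small in $L^\infty$, and they multiply the growing factors $e^{\mp\frac{x}{2}\sin\gamma}$, so one may not estimate $u - u_{\gamma,a,\theta}$ by subtracting inside numerator and denominator. The resolution exploits cancellation after division. In \eqref{d1u} the coefficient of $-p$ is a ratio of two quantities each comparable (with a $\gamma$-dependent constant) to $|\phi_1|^2 + |\phi_2|^2$, hence bounded, so that term contributes $\lesssim \|p(\cdot,t)\|_{L^2} = \|p_0\|_{L^2}$ by the conservation law \eqref{l2-conservation}. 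For the second term one writes $\phi_1 = c_1 e^{-\frac{x}{2}\sin\gamma} m_1\varphi_1 + c_2 e^{\frac{x}{2}\sin\gamma}\bar m_2\chi_1$, $\phi_2 = c_1 e^{-\frac{x}{2}\sin\gamma}\bar m_1\varphi_2 + c_2 e^{\frac{x}{2}\sin\gamma} m_2\chi_2$, expands $\bar\phi_1\phi_2$ and the denominator, and observes that the factor $e^{-x\sin\gamma}$ produced by $|\phi_1|^2$ (dominant as $x\to-\infty$) and the factor $e^{x\sin\gamma}$ produced by $|\phi_2|^2$ (dominant as $x\to+\infty$) occur in both numerator and denominator and cancel. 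What survives is estimated in three regimes: as $x\to-\infty$ by $\|\varphi_2(\cdot,t)\|_{L^2}$, as $x\to+\infty$ by $\|\chi_1(\cdot,t)\|_{L^2}$, and on the transition region by the $L^\infty$-smallness of $\varphi_1 - e^{it\cos\gamma/2}$, $\chi_2 - e^{-it\cos\gamma/2}$, $m_1 - 1$, $m_2 - 1$ multiplied by a $\sech$-type profile which lies in $L^2$ uniformly in $a$ (the bounded-below denominator forces exponential decay of the remainder at rate $\sin\gamma$). Since $\|m_j - 1\|_{L^\infty} \lesssim \|p_0\|_{L^2}^2 + \|q_0\|_{L^2}^2$ and all the other quantities are $\lesssim \|p_0\|_{L^2} + \|q_0\|_{L^2}$ by \eqref{bdphi}--\eqref{bdchi}, uniformly in $t$ thanks to \eqref{l2-conservation} and Lemma \ref{Tevol}, the bound \eqref{ineqU} follows, and \eqref{ineqV} is obtained identically from \eqref{d1v}. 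The only genuinely delicate point is the bookkeeping of which exponential dominates where, carried out so that every constant is independent of the free parameters $a,\theta$ and of $t$; once the cancellation structure is isolated, the estimates themselves are elementary.
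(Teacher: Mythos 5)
Your proposal is correct and follows essentially the same route as the paper: the modulus-one factor multiplying $p$ combined with charge conservation, the lower bound on the denominators, and the two-region cancellation of the dominant exponentials with the remainder controlled by the $L^2$-smallness of $\varphi_2,\chi_1$ and the $L^\infty$-smallness of $\varphi_1-e^{it\cos\gamma/2}$, $\chi_2-e^{-it\cos\gamma/2}$, $m_j-1$ against the $a$-uniformly $L^2$ weight $e^{-|a-x\sin\gamma|}$ are exactly the paper's estimates (\ref{Req})--(\ref{ineqR}). The only divergence is minor: you obtain $(u,v)\in C(\mathbb{R};H^2(\mathbb{R}))$ by checking $H^2$ regularity at $t=0$ and invoking Theorem \ref{theorem-Candy} with uniqueness, whereas the paper differentiates (\ref{d1u})--(\ref{d1v}) in $x$ twice directly; both work, and your explicit lower bound on the denominators is a detail the paper leaves implicit.
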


\begin{proof}
Let us introduce $\vec{\psi}=(\psi_1,\psi_2)^t$ by
\begin{equation}\label{d2}
\psi_1:=\frac{\overline{\phi}_2}{|e^{i\gamma/2}|\phi_1|^2+e^{-i\gamma/2}|\phi_2|^2|}, \quad \psi_2:=\frac{\overline{\phi}_1}{|e^{i\gamma/2}|\phi_1|^2+e^{-i\gamma/2}|\phi_2|^2|}.
\end{equation}
The inequalities \eqref{bdphi} and \eqref{bdchi} imply that $(u,v)$ and $\vec{\psi}$
are bounded for every $x \in \mathbb{R}$ and $t \in \mathbb{R}$. 
If $(p,q)$ are $C^1$ functions of $(x,t)$ and $\vec{\phi}$ is a $C^2$ function of $(x,t)$, 
then $(u,v)$ are $C^1$ functions of $(x,t)$ and 
$\vec{\psi}$ is a $C^2$ function of $(x,t)$.
Proposition \ref{backlund} states that
$\vec{\psi}$ given by \eqref{d2} satisfies the evolution equations
$$
\partial_x\vec{\psi}= L(u,v,\lambda) \vec{\psi}, \quad
\partial_t\vec{\psi} = A(u,v,\lambda) \vec{\psi},
$$
for $\lambda = e^{i \gamma/2}$. As a result, the compatibility condition
$\partial_x\partial_t\vec{\psi}=\partial_t\partial_x\vec{\psi}$
for every $x \in \mathbb{R}$ and $t \in \mathbb{R}$ yields the MTM
system (\ref{MTM}) for the functions $(u,v)$.

We shall now prove inequality \eqref{ineqU}. The proof of inequality \eqref{ineqV}
is analogous. First, we write \eqref{d1u} in the form of
\begin{equation} \label{Req}
R := \frac{2i\sin\gamma \overline{\phi}_1\phi_2}{e^{i\gamma/2}|\phi_1|^2+e^{-i\gamma/2}|\phi_2|^2} =
u + p \frac{e^{-i\gamma/2}|\phi_1|^2+e^{i\gamma/2}|\phi_2|^2}{e^{i\gamma/2}|\phi_1|^2+e^{-i\gamma/2}|\phi_2|^2}.
\end{equation}
Explicit substitutions of \eqref{sol1} into \eqref{Req} yield
$$
R := \frac{2i\sin\gamma \left( \overline{m}_1m_2e^{-i\theta}\overline{\varphi}_1\chi_2+R_1\right)}{
e^{i\gamma/2+a - x \sin\gamma}|\varphi_1|^2+e^{-i\gamma/2-a + x \sin\gamma}|\chi_2|^2+R_2},
$$
where
$$
R_1 := \overline{m}_1^2e^{a- x \sin\gamma}\overline{\varphi}_1\varphi_2+
\overline{m}_1m_2 e^{i\theta} \varphi_2\overline{\chi}_1+m_2^2e^{-a + x \sin\gamma}\overline{\chi}_1\chi_2
$$
and
$$
R_2 := e^{i\gamma/2-a + x \sin\gamma}|\chi_1|^2+e^{-i\gamma/2+a - x \sin\gamma}
|\varphi_2|^2+2e^{i\gamma/2}\mbox{Re}[m_1m_2e^{i\theta}\varphi_1\overline{\chi}_1]+
2 e^{-i\gamma/2} \mbox{Re}[\overline{m}_1\overline{m}_2e^{i\theta}\varphi_2\overline{\chi}_2].
$$

By bounds \eqref{bdphi} and \eqref{bdchi} in Lemma \ref{Tevol}, we have
$|\varphi_1|, |\chi_2|\sim 1$ and $|\varphi_2|, |\chi_1|\sim 0$,
so that for $a-x\sin\gamma \leq 0$,
\begin{equation} \label{Rp}
R = \frac{2i\sin\gamma \overline{m}_1m_2e^{-i\theta+a-x\sin\gamma}
\overline{\varphi}_1\chi_2}{e^{i\gamma/2+2(a-x\sin\gamma)}|\varphi_1|^2+e^{-i\gamma/2}|\chi_2|^2}+
\mathcal{O}(|\varphi_2| + |\chi_1|)
\end{equation}
and for $a-x\sin\gamma \geq 0$,
\begin{equation}\label{Rn}
R = \frac{2i\sin\gamma \overline{m}_1m_2e^{-i\theta-a+x\sin\gamma}
\overline{\varphi}_1\chi_2}{e^{i\gamma/2}|\varphi_1|^2+e^{-i\gamma/2-2(a-x\sin\gamma)}|\chi_2|^2}+
\mathcal{O}(|\varphi_2| + |\chi_1|).
\end{equation}
Combining \eqref{Rp} and \eqref{Rn}, we get
\begin{eqnarray*}
& \phantom{t} & \left| R - \frac{2i\sin\gamma e^{-i\theta-it\cos\gamma}}{
e^{i\gamma/2+a-x\sin\gamma} + e^{-i\gamma/2}-a + x \sin \gamma} \right|\\
&\lesssim& e^{-|a-x\sin\gamma|}(|\varphi_1 - e^{it\frac{\cos\gamma}{2}}|
+ |\chi_2 - e^{-it\frac{\cos\gamma}{2}}|+|m_1-1| + |m_2-1|)+|\varphi_2|+|\chi_1|
\end{eqnarray*}
Since $m_1=e^{\frac{i}{4} \int_{-\infty}^x(|p|^2-|q|^2)ds}$ and $m_2=e^{\frac{i}{4}\int_x^{\infty}(|p|^2-|q|^2)ds}$,
we obtain the bounds
$$
\| m_{1,2}(\cdot,t) - 1 \|_{L^{\infty}} \lesssim \|p\|_{L^2}^2 + \|q\|_{L^2}^2,
$$
provided that $\|p\|_{L^2}$ and $\|q\|_{L^2}$ are sufficiently small.
Then, by Lemma \ref{Tevol} and the $L^2$ conservation law (\ref{l2-conservation}), the previous estimate yields
\begin{equation} \label{ineqR}
\left\| R(x,t)-ie^{-i\theta-it\cos\gamma}\sin\gamma \sech \left( x \sin\gamma -i\frac{\gamma}{2}-a \right)
\right\|_{L^2_x} \lesssim \|p_0\|_{L^2}+\|q_0\|_{L^2}.
\end{equation}
Using the definition (\ref{Req}), the bound \eqref{ineqR}, and the triangle inequality, we obtain inequality \eqref{ineqU}.

Lastly, if $(p,q) \in C(\mathbb{R};H^2(\mathbb{R}))$, we can differentiate equation \eqref{Rp} and \eqref{Rn} in $x$ twice
to show from \eqref{d1u} and \eqref{d1v} that $(u,v) \in C(\mathbb{R};H^2(\mathbb{R}))$.
\end{proof}

\section{Proof of Theorem \ref{maintheorem}}

Thanks to the Lorentz transformation given by Proposition \ref{prop-Lorentz},
we may choose $\lambda_0=e^{i\gamma_0/2}$, $\gamma_0\in(0,\pi)$ in Theorem \ref{maintheorem}.
For a given initial data $(u_0,v_0)$ satisfying the inequality (\ref{bound-before}) for
sufficiently small $\epsilon$, we map a $L^2$-neighborhood of one-soliton solution to that of the zero solution.
To do so, we use Lemma \ref{onebound1} and obtain a solution $\vec{\psi}$
of the linear system \eqref{laxeq2} with a spectral parameter $\lambda\in \mathbb{C}$ satisfying
\begin{equation}\label{ineq1}
|\lambda-e^{i\gamma_0/2}|\lesssim  \|u_0 - u_{\gamma_0}\|_{L^2}+\|v_0-v_{\gamma_0}\|_{L^2} \leq \epsilon.
\end{equation}
We should note that the same Lorentz transformation cannot be used twice to consider the cases
of $\lambda_0=e^{i\gamma_0/2}$ and $\lambda=e^{i\gamma/2}$ simultaneously;
the assumption $\lambda_0 = e^{i\gamma_0/2}$ implies that $\lambda$ is not generally
on the unit circle, and vice versa. Hence, if $\lambda_0 = e^{i \gamma_0/2}$ is set,
all formulas in Section 3 below Remark \ref{remark-normalization} must in fact be generalized for
a general $\lambda$. However, this generalization is straightforward thanks again to the existence
of the Lorentz transformation given by Proposition \ref{prop-Lorentz}.
In what follows, we then use the general MTM solitons $(u_{\lambda},v_{\lambda})$ given by (\ref{onesol}).

By Lemma \ref{pqbound}, the auto--B\"{a}cklund transformation \eqref{up}
and \eqref{vq} with $\vec{\psi}$ in Lemma \ref{onebound1}
yields an initial data $(p_0,q_0) \in L^2(\mathbb{R})$ of the MTM
system (\ref{MTM}) satisfying the estimate
\begin{eqnarray}
\nonumber
\|p_0\|_{L^2} +\|q_0\|_{L^2} & \lesssim &
\|u_0 - u_{\lambda}(\cdot,0) \|_{L^2}+\|v_0-v_{\lambda}(\cdot,0)\|_{L^2} \\
\nonumber
& \lesssim & \|u_0 - u_{\gamma_0}\|_{L^2}+\|v_0-v_{\gamma_0}\|_{L^2}
+ \|u_{\lambda}(\cdot,0) - u_{\gamma_0}\|_{L^2}+\|v_{\lambda}(\cdot,0) - v_{\gamma_0}\|_{L^2} \\
\label{ineq2}
& \lesssim & \|u_0 - u_{\gamma_0}\|_{L^2}+\|v_0-v_{\gamma_0}\|_{L^2}  \leq \epsilon,
\end{eqnarray}
where we have used the triangle inequality and the bound (\ref{ineq1}).

Since the time evolution in Section 4 is well-defined if $(p_0,q_0) \in H^2(\mathbb{R})$,
let us first assume that the initial data $(u_0,v_0) \in L^2(\mathbb{R})$
satisfying the inequality (\ref{bound-before}) also satisfy $(u_0,v_0) \in H^2(\mathbb{R})$.
Then, $(p_0,q_0) \in H^2(\mathbb{R})$ by Lemma \ref{pqbound}.
Let $(p,q) \in C(\mathbb{R};H^2(\mathbb{R}))$ be the unique solution of the MTM system (\ref{MTM})
such that $(p,q) |_{t = 0} = (p_0,q_0)$. Next we will map a $L^2$-neighborhood
of the zero solution to that of one-soliton solution for all $t\in \mathbb{R}$.

By Lemma \ref{Tevol}, we construct a solution of the Lax equations
\begin{equation}
\label{lax-system}
\partial_x \vec{\phi} = L(p,q,\lambda) \vec{\phi} \quad
\mbox{\rm and} \quad \partial_t \vec{\phi} = A(p,q,\lambda)
\vec{\phi}
\end{equation}
for the same eigenvalue $\lambda$ as in (\ref{ineq1}). Let
$$
k_1(\lambda) := \frac{i}{4} \left( \lambda^2 - \frac{1}{\lambda^2} \right), \quad
k_2(\lambda) := \frac{1}{4} \left( \lambda^2 + \frac{1}{\lambda^2} \right).
$$
The solution of the Lax system (\ref{lax-system}) is constructed in the form
\begin{equation}
\label{sol1-final}
\vec{\phi}(x,t) = c_1 M_1(x,t) e^{x k_1(\lambda)} \vec{\varphi}(x,t)
+ c_2 M_2(x,t) e^{-x k_1(\lambda)} \vec{\chi}(x,t),
\end{equation}
where unitary matrices $M_1$ and $M_2$ are given in \eqref{unitaryM}
with $m_1$ and $m_2$ given by (\ref{m1-m2}), whereas
the vectors $\vec{\varphi}$ and $\vec{\chi}$ satisfy the estimates
\begin{equation} \label{est1}
\|\varphi_1(\cdot,t) - e^{it k_2(\lambda)}\|_{L^{\infty}} +
\|\varphi_2(\cdot,t) \|_{L^2 \cap L^{\infty}} \lesssim \|p_0\|_{L^2}+\|q_0\|_{L^2} \leq \epsilon
\end{equation}
and
\begin{equation}\label{est2}
\|\chi_1(\cdot,t)\|_{L^2 \cap L^{\infty}} + \|\chi_2(\cdot,t) - e^{-itk_2(\lambda)}\|_{L^{\infty}}
\lesssim \|p_0\|_{L^2}+\|q_0\|_{L^2} \leq \epsilon.
\end{equation}
The coefficients $c_1$ and $c_2$  of the linear superposition (\ref{sol1-final})
can be parameterized by parameters $a$ and $\theta$ as follows:
$$
c_1 = e^{(a+i\theta)/2}, \quad c_2 = e^{-(a+i\theta)/2}.
$$
Parameters $a$ and $\theta$ may depend on the time variable $t$ but not on the space
variable $x$. These parameters determine the spatial and gauge translations of the MTM
solitons according to the transformation (\ref{Soliton-Transformation}).

By Lemma \ref{lastbd}, the auto--B\"{a}cklund transformation generates a new solution $(u,v)$ of the
MTM system (\ref{MTM}) satisfying the bound for every $t\in \mathbb{R}$,
\begin{equation}
\label{ineq3}
\inf_{a,\theta\in \mathbb{R}}(\|u(\cdot+a,t)-e^{-i\theta}u_{\lambda}(\cdot,t) \|_{L^2}
+\|v(\cdot+a,t)-e^{-i\theta}v_{\lambda}(\cdot,t)\|_{L^2})\lesssim \|p_0\|_{L^2}+\|q_0\|_{L^2} \leq \epsilon.
\end{equation}

Theorem \ref{maintheorem} is proved if $(u_0,v_0) \in H^2(\mathbb{R})$.
To obtain the same result for $(u_0,v_0) \in L^2(\mathbb{R})$, we
construct an approximating sequence $(u_{0,n},v_{0,n}) \in H^2(\mathbb{R})$ $(n\in \mathbb{N})$
that converges as $n \to \infty$ to $(u_0,v_0) \in L^2(\mathbb{R})$ in the $L^2$-norm.
For a sufficiently small $\epsilon>0$, we let
$$
\|u_{0,n}-u_{\gamma_0}\|_{L^2}+\|v_{0,n}-v_{\gamma_0}\|_{L^2} \leq \epsilon, \quad
\mbox{\rm for every} \; n \in \mathbb{N}.
$$
Under this condition, for each $(u_{0,n},v_{0,n}) \in H^2(\mathbb{R})$,
we obtain inequalities \eqref{ineq1}, \eqref{ineq2}, and \eqref{ineq3}
independently of $n$. Therefore, there is a subsequence of solutions
$(u_{n},v_{n}) \in C(\mathbb{R};H^2(\mathbb{R}))$ $(n\in \mathbb{N})$
of the MTM system (\ref{MTM}) such that
it converges as $n \to \infty$ to a solution $(u,v) \in C(\mathbb{R};L^2(\mathbb{R}))$
of the MTM system (\ref{MTM}) satisfying inequalities (\ref{bound-parameters}) and (\ref{bound-after}).
The proof of Theorem \ref{maintheorem} is now complete.

\end{document}